\documentclass[a4paper]{article}

\usepackage{algorithm}
\usepackage{algpseudocode}
\usepackage{amsmath} 

\usepackage{float}

\usepackage{hyperref}

\usepackage{booktabs}
\usepackage{subcaption}

\usepackage{multirow}

\usepackage{graphicx}
\usepackage{subcaption}

\usepackage{xcolor}
\usepackage[top=2.3cm, bottom=2.3cm, left=2cm, right=2cm]{geometry}

\usepackage{tikz}

\usepackage{multirow}

\usepackage{booktabs, multirow}

\usepackage{amsmath}
\usepackage{amsthm}
\usepackage{amssymb}
\usepackage{bbm}

\usepackage{mathtools}

\usepackage{soul}

\usepackage{lipsum}

\usepackage[utf8]{inputenc}
\usepackage{hyperref}
\hypersetup{
	unicode,
	pdfauthor={Author One, Author Two, Author Three},
	pdftitle={A simple article template},
	pdfsubject={A simple article template},
	pdfkeywords={article, template, simple},
	pdfproducer={LaTeX},
	pdfcreator={pdflatex}
}

\usepackage[sort&compress,numbers,square]{natbib}
\theoremstyle{plain}
\newtheorem{theorem}{Theorem}
\newtheorem{corollary}[theorem]{Corollary}
\newtheorem{lemma}[theorem]{Lemma}

\newtheorem{proposition}[theorem]{Proposition}

\theoremstyle{definition}

\newtheorem{remark}[theorem]{Remark}

\usepackage{enumitem}

\makeatletter
\newcommand{\mylabel}[1]{%
	\item[\textbf{(#1)}]%
	\protected@edef\@currentlabel{%
		\noexpand\textbf{(#1)}%
	}%
}
\makeatother

\usepackage{graphicx, color}
\graphicspath{{fig/}}

\usepackage{mathrsfs} 

\usepackage{lipsum}

\title{A simple article template}
\author{Author One$^1$\thanks{Author One was partially supported by Grant XXX} \and Author Two$^2$ \and Author Three$^1$}

\date{
	$^1$Organization 1 \\ \texttt{\{auth1, auth3\}@org1.edu}\\%
	$^2$Organization 2 \\ \texttt{auth3@inst2.edu}\\[2ex]%
}

\title{A Deep Backward Scheme for Reflected BSDEs via Penalization : Approximation and Convergence Analysis}
\author{
E. H. Essaky \\
{\small Cadi Ayyad University} \\
{\small Faculty of Applied Sciences, Safi} \\
{\small Department of Mathematics and Computer Science} \\
{\small P.O. Box 4162, Safi, Morocco.} \\
{\small e-mail : essaky@uca.ac.ma}
\and
M. A. Jalal \\
{\small Cadi Ayyad University} \\
{\small Faculty of Applied Sciences, Safi} \\
{\small Department of Mathematics and Computer Science} \\
{\small P.O. Box 4162, Safi, Morocco.} \\
{\small e-mail : m.jalal.ced@uca.ac.ma}
\and
Y. Ouknine \\
{\small Mohammed VI Polytechnic University} \\
{\small Africa Business School} \\
{\small Avenue Mohammed Ben Abdellah Regragui,} \\
{\small Madinat Al Irfane, BP 6380, Rabat, Morocco.} \\
{\small e-mail : youssef.ouknine@um6p.ma}
}

\date{}

\begin{document}
\maketitle

\begin{abstract}
	
	We propose and analyze a deep backward numerical scheme for Markovian reflected backward stochastic differential equations with a generator independent of the control variable, based on the penalization method. The reflection constraint is replaced by penalized BSDEs, which are approximated backward in time by neural networks, with particular attention to the dependence of the errors on the penalization parameter \(\lambda\).
	We derive quantitative error estimates for both the value process \(Y\) and the control process \(Z\). A key ingredient is an \(L^2\)-regularity estimate for the control component with explicit dependence on \(\lambda\). Exploiting the monotonicity of the penalization term, we obtain stability estimates uniform in \(\lambda\) and prove uniform spatial Lipschitz bounds for both the penalized value functions and the discrete backward targets. Consequently, the GroupSort Lipschitz constraints and the associated neural-network approximation error can be controlled uniformly in \(\lambda\).
	The global analysis separates penalization, time-discretization, forward, and neural-network approximation errors. Balancing the dominant terms yields a mean-square error of order \(O(|\pi|^{1/3})\) for the value component, together with a corresponding estimate for a post-training reconstruction of \(Z\). Numerical experiments with an explicit reflected BSDE and American option problems illustrate the scheme on one-dimensional and structured high-dimensional benchmarks.
	
	\medskip
	\noindent\textbf{Keywords:} Reflected backward stochastic differential equations, penalization method, deep learning, backward dynamic programming, GroupSort networks, error analysis. \\ \\ 
	\textbf{AMS Classification}\textbf{: 65C30,
		60H35, 60H20, 68T07.}
	
\end{abstract}
	
\section{Introduction}
	Backward stochastic differential equations (BSDEs), originating from
the work of Bismut \cite{bismut} and subsequently developed in their
nonlinear form by Pardoux and Peng \cite{pardoux peng}, have become a
fundamental tool in stochastic analysis. They arise naturally in
stochastic control, mathematical finance, and the probabilistic
representation of nonlinear partial differential equations through
nonlinear Feynman--Kac formulas.

An important extension of the classical BSDE framework is provided by
reflected backward stochastic differential equations (RBSDEs),
introduced by El Karoui et al.\ \cite{karoui}. In this setting, the
first component of the solution is constrained to remain above a
prescribed obstacle. More precisely, given an obstacle process \(S\),
a solution is a triplet \((Y,Z,K)\) satisfying a backward equation of
the form
\[
Y_t
=
\xi
+
\int_t^T f(s,X_s,Y_s)\,\mathrm ds
+
K_T-K_t
-
\int_t^T Z_s\,\mathrm dW_s,
\qquad 0\leq t\leq T,
\]
together with the constraint
\[
Y_t\geq S_t,
\qquad 0\leq t\leq T,
\]
where \(K\) is a nondecreasing process satisfying the Skorokhod
minimality condition
\[
\int_0^T (Y_t-S_t)\,\mathrm dK_t=0.
\]
The process \(K\) therefore acts only when the solution reaches the
obstacle and represents the minimal correction required to maintain
the constraint. Under suitable Lipschitz and integrability conditions,
existence and uniqueness of the solution can be established. RBSDEs
are closely related to optimal stopping problems, American option
pricing, and obstacle problems for nonlinear parabolic PDEs.

From a numerical point of view, the presence of the reflection process
makes RBSDEs more difficult to approximate than standard BSDEs. A
classical way of handling the constraint is the penalization method.
For a penalization parameter \(\lambda>0\), the reflected equation is
replaced by the standard BSDE
\[
Y_t^\lambda
=
g(X_T)
+
\int_t^T
f^\lambda(s,X_s,Y_s^\lambda)\,\mathrm ds
-
\int_t^T Z_s^\lambda\,\mathrm dW_s,
\]
where
\[
f^\lambda(t,x,y)
=
f(t,x,y)
+
\lambda\bigl(y-h(t,x)\bigr)^-,
\qquad
r^-:=\max(-r,0),
\]
and the Markovian obstacle is given by
\[
S_t=h(t,X_t).
\]
As \(\lambda\) tends to infinity, the penalty increasingly prevents
violations of the constraint, and, under appropriate assumptions,
\((Y^\lambda,Z^\lambda)\) converges to the corresponding components
\((Y,Z)\) of the reflected BSDE.

The penalization approach is particularly convenient from a numerical
perspective since, for each fixed \(\lambda\), it reduces the reflected
problem to a standard BSDE. The latter can then be discretized in time,
leading to a backward dynamic programming scheme involving conditional
expectations. In high dimension, however, the numerical evaluation of
these conditional expectations becomes a major computational
difficulty and motivates the use of neural-network approximations.\\


The numerical approximation of BSDEs is by now a well-established field. Discrete-time schemes based on backward induction and the computation of conditional expectations were developed among others by Zhang \cite{zhang} and Bouchard and Touzi \cite{touzi}, providing quantitative error estimates under standard Lipschitz assumptions. A central ingredient in these analyses is the \(L^2\)-path regularity of the control component, which plays a crucial role in the convergence analysis of time-discretization schemes. Classical implementations typically rely on regression Monte Carlo or quantization methods to approximate the conditional expectations arising at each time step; their computational cost, however, may increase significantly with the dimension.

The numerical approximation of reflected BSDEs has also been extensively studied. In the Markovian setting, Ma and Zhang \cite{ma zhang} established representation and regularity results for reflected BSDEs and introduced discrete-time approximation procedures, showing in particular how the \(L^2\)-regularity of the martingale integrand determines the convergence rate of the numerical scheme. Bouchard and Touzi \cite{touzi} extended their discrete-time and Monte Carlo methodology to the reflected case. Subsequently, Bouchard and Chassagneux \cite{bouchard chassagneux} provided a detailed analysis of discrete-time approximations for both discretely and continuously reflected BSDEs, obtaining convergence estimates for the components of the reflected solution. These works provide the main theoretical foundations for time-discretization schemes for reflected BSDEs.

An alternative approach consists in approximating the reflection through penalization. The reflected BSDE is then replaced by a family of standard BSDEs whose drivers contain a penalty term enforcing the obstacle constraint. Lepeltier and Xu \cite{lepeltier} established the convergence of penalized solutions for an r.c.l.l.\ obstacle, while Gobet and Wang \cite{gobet} obtained quantitative estimates with respect to the penalization parameter. From a numerical perspective, the main advantage of this approach is that it reduces the reflected problem to a sequence of standard BSDEs, making it possible to combine penalization with existing numerical approximation methods for BSDEs.

Combining penalization with a deep backward scheme for reflected BSDEs raises a difficulty absent from the non reflected case. The approximation error now accumulates four contributions: penalization, time discretization, Euler scheme for the forward process, and neural-network approximation, which cannot be handled independently, since the penalized generator $f^\lambda$ is Lipschitz in $y$ with a
constant of order $\lambda$. A direct application of the standard stability estimates therefore yields bounds that grow with the penalization parameter and the number of time steps must be coupled, together with a careful handling of the Lipschitz constants. Resolving this point is the starting point of our analysis.

In this paper, we propose and analyze a deep backward scheme for reflected BSDEs based on penalization. We restrict our analysis to Markovian reflected BSDEs whose generator is independent of the control variable \(z\). This structure allows us to construct the approximation of the \(Y\)-component without introducing a separate neural network for \(Z\).

For a fixed penalization parameter \(\lambda\), we first approximate the forward diffusion by its Euler scheme. The penalized BSDE is then discretized backward in time by an implicit Euler scheme. At each time step, the deterministic function representing the \(Y\)-component is approximated by a neural network through the minimization of a quadratic loss. Thus, the neural networks are constructed recursively backward from the terminal condition rather than being trained jointly over the whole time grid.

A central feature of the proposed method is that only the \(Y\)-component is involved in the training procedure. Once the approximations $ \mathcal U_i^\lambda(X_{i}^\pi)$, 
have been computed, an approximation of the control process can be reconstructed by
\[ \mathcal Z_i^\lambda = \frac{1}{\Delta t_i} \mathbb E_{t_i} \left[ \mathcal U_{i+1}^\lambda(X_{i+1}^\pi) \Delta W_{t_i} \right]. \]
Consequently, the approximation of \(Z\) is obtained as a post-processing step and does not require the training of an additional neural network.

Our analysis separates the different sources of approximation error. For the penalized solution, we establish an estimate of the form
\[ \max_{0\leq i\leq N-1} \left\| Y_{t_i}^\lambda - \mathcal U_i^\lambda(X_{i}^\pi) \right\|_2 \leq C\left[ |\pi|^{1/2} +\lambda|\pi|^{1/2} + \sum_{i=0}^{N-1} \sqrt{\varepsilon_i^{\mathcal N,v^\lambda}} \right], \]

where
\[ \varepsilon_i^{\mathcal N,v^\lambda} = \inf_{\theta} \mathbb E \left[ \left| v_i^\lambda(X_{i}^\pi) - \mathcal U(X_{i}^\pi;\theta) \right|^2 \right], \]
denotes the neural-network approximation error at time \(t_i\) for a deterministic function $v_i^\lambda$. This implies,
\[ \max_{0\leq i\leq N-1} \mathbb E \left[ \left| Y_{t_i}^\lambda - \mathcal U_i^\lambda(X_{i}^\pi) \right|^2 \right] \leq C\left[ |\pi| + \lambda^2|\pi| + N \sum_{i=0}^{N-1}\varepsilon_i^{\mathcal N,v^\lambda} \right]. \]

This estimate makes explicit the interaction between the time discretization, the penalization parameter, and the neural-network approximation errors, and requires a careful handling of the Lipschitz property of $f^\lambda$, to ensure that the constant $C$ in these estimates does not depend on the penalty parameter $\lambda$.

For the reconstructed control process, we establish the quantitative estimate
\[ \mathbb E \left[ \sum_{i=0}^{N-1} \int_{t_i}^{t_{i+1}} \left| Z_t^\lambda-\mathcal Z_i^\lambda \right|^2 \,\mathrm dt \right] \leq C\left[ |\pi| + \lambda^2|\pi| + N\sum_{i=0}^{N-1} \varepsilon_i^{\mathcal N,v^\lambda} \right]. \]

This provides an explicit error estimate for \(Z\), even though no neural network is directly trained to approximate this component.

To obtain quantitative bounds for the neural-network approximation errors, we use GroupSort neural networks. Their main advantage in our setting is that they provide quantitative uniform approximation of Lipschitz functions while preserving a prescribed Lipschitz bound. We prove that the deterministic functions that we approximate with the backward scheme have Lipschitz constants uniformly bounded independently of \(\lambda\). The quantitative approximation properties of GroupSort networks can then be used to control~\(\varepsilon_i^{\mathcal N,v^\lambda}\) explicitly in terms of the network architecture.

By combining the penalization error with the discretization and neural-network approximation errors, we establish the global error estimate between the exact value process $Y$ of the RBSDE and the learned approximation
\begin{equation}
\max_{0\leq i\leq N-1} \mathbb E \left[ \left| Y_{t_i} - \mathcal U_i^\lambda(X_{i}^\pi) \right|^2 \right] \leq C|\pi|^\frac{1}{3},
\label{eq:main_rate1}
\end{equation}

and for the reconstructed control process 
\begin{equation}
\mathbb E \left[ \sum_{i=0}^{N-1} \int_{t_i}^{t_{i+1}} \left| Z_t-\mathcal Z_i^\lambda \right|^2\mathrm{d}t \right]\leq C|\pi|^\frac{1}{3},
\label{eq:main_rate2}
\end{equation}
with $\lambda = c_0|\pi|^{-1/3}$ for some $c_0>0$.\\

The remainder of the paper is organized as follows. Section~\ref{sec:deep learning} introduces the deep learning setting. Section~\ref{sec:preliminary} presents the mathematical framework of RBSDEs and the penalization approach, together with the main assumptions and the reference convergence results. Section~\ref{sec:deep_scheme} describes the DBDP-PBSDE algorithm, loss function and training procedure. Section~\ref{sec:L2 regularity} establishes the $L^2$-regularity results for the solution of the penalized BSDE that are needed to derive the convergence rate for the $Z$-component. Section~\ref{sec:convergence} contains the convergence analysis and the proofs of the main estimates \eqref{eq:main_rate1} and
\eqref{eq:main_rate2}. Section~\ref{sec:numerics} reports the numerical experiments, including American option pricing, and illustrates the accuracy of the scheme in high dimension.

\section{Approximation of functions using deep neural networks}
\label{sec:deep learning}
Let \(d_0,d_1\in\mathbb N^*\) denote the input and output dimensions, respectively. We fix \(\ell\in\mathbb N^*\), the number of hidden layers, and let \( m=(m_0,\ldots,m_{\ell-1}) \) denote the numbers of neurons in the hidden layers.

A feedforward neural network is a function from \(\mathbb R^{d_0}\) to \(\mathbb R^{d_1}\) defined as the composition
\begin{equation*}
\Phi = \phi_\ell\circ\phi_{\ell-1}\circ\cdots\circ\phi_0,
\end{equation*}
where
\[ \phi_0:\mathbb R^{d_0}\longrightarrow\mathbb R^{m_0},\qquad \phi_i:\mathbb R^{m_{i-1}}\longrightarrow\mathbb R^{m_i}, \qquad i=1,\ldots,\ell-1,\quad \text{and}\quad \phi_\ell:\mathbb R^{m_{\ell-1}}\longrightarrow\mathbb R^{d_1}.\]

The first \(\ell\) layer functions are of the form
\[ \phi_i(x)=\rho_i(\mathcal W_i x+b_i), \qquad i=0,\ldots,\ell-1, 
\quad\text{whereas the output layer is affine:}\quad
 \phi_\ell(x)=\mathcal W_\ell x+b_\ell. \]
Here, \(\mathcal W_i\) and \(b_i\) are respectively the weight matrix and bias vector of the \(i\)-th layer. Usually, the activation functions are applied componentwise through a one-dimensional activation function
\[ \widehat\rho_i:\mathbb R\longrightarrow\mathbb R,\]
namely
\[ \rho_i(x_1,\ldots,x_{m_i}) = \bigl( \widehat\rho_i(x_1),\ldots,\widehat\rho_i(x_{m_i}) \bigr). \]

Standard examples include the sigmoid, ReLU, ELU, and hyperbolic tangent activation functions.

We denote by
\[ \mathcal N_{d_0,d_1,\ell,m}^{\rho} \]
the set of feedforward neural networks with input dimension \(d_0\), output dimension \(d_1\), \(\ell\) hidden layers, hidden-layer dimensions \(m=(m_i)_{i=0,\ldots,\ell-1}\), and activation functions \(\rho=(\rho_i)_{i=0,\ldots,\ell-1}\).

A given neural network \(\Phi\in\mathcal N_{d_0,d_1,\ell,m}^{\rho}\) is determined by its weight and bias parameters
\[ \theta = (\mathcal W_0,b_0,\ldots,\mathcal W_\ell,b_\ell) \in \Theta_m, \]
where $\Theta_m$ denotes the set of all possible parameters of networks of $\mathcal N_{d_0,d_1,\ell,m}^{\rho}$. We shall sometimes write~\(\Phi=\Phi(\cdot;\theta)\).\\

The fundamental universal approximation theorem of \cite{hornik 89} justifies the use of neural networks as function approximators.

\begin{theorem}[Universal approximation theorem]
\label{thm:universal-approximation}
Assume that the activation function \(\rho\) is continuous and nonconstant. Then
\[ \mathcal N_{d_0,d_1,\ell}^{\rho} := \bigcup_{m_0,\ldots,m_{\ell-1}\geq1} \mathcal N_{d_0,d_1,\ell,m}^{\rho} \]
is dense in \(L^2(\nu)\), for any finite measure \(\nu\) on \(\mathbb R^{d_0}\).
\end{theorem}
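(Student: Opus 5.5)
The plan is to reduce the statement to a uniform approximation result on compact sets for one hidden layer, and then to handle depth and the $L^2(\nu)$ tails separately. Before starting, I flag a caveat. As literally worded, with $\rho$ only continuous and nonconstant, the statement fails when $\rho$ is a polynomial of degree $k$: every network in $\mathcal N^{\rho}_{d_0,d_1,\ell}$ is then a polynomial of degree at most $k^\ell$, and such polynomials form a finite-dimensional, hence closed and non-dense, subspace of $L^2(\nu)$ whenever $\nu$ has infinite support. I will therefore prove the statement under the standing interpretation that $\rho$ is continuous and \emph{non-polynomial}, which is the hypothesis of Leshno--Lin--Pinkus. I also assume that networks are $\nu$-square integrable, for instance that $\nu$ has finite moments of the order of the growth of $\rho$; this is automatic for bounded $\rho$, the original setting of \cite{hornik 89}. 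Finally, since the output layer acts componentwise, it suffices to treat $d_1=1$.

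\textbf{Step 1 (reduction to continuous compactly supported targets).} Since $\nu$ is a finite Borel measure on $\mathbb R^{d_0}$, $C_c(\mathbb R^{d_0})$ is dense in $L^2(\nu)$, and $\nu$ is tight. Fix $g\in L^2(\nu)$ and $\varepsilon>0$. Choose $\varphi\in C_c$ with $\|g-\varphi\|_{L^2(\nu)}<\varepsilon$. Then choose a ball $K\supset \operatorname{supp}\varphi$ such that $\nu(K^c)$ is small and, by the moment assumption, $\int_{K^c}(1+|x|)^{2p}\,\nu(dx)$ is also small.

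\textbf{Step 2 (one hidden layer, uniform on $K$).} Apply the Leshno--Lin--Pinkus theorem: if $\rho$ is continuous and not a polynomial, then $\mathrm{span}\{\rho(w\cdot x+b)\}$ is dense in $C(K)$ for the sup norm. I will use this as the known ingredient. Its proof runs as follows. One first treats $d_0=1$ by mollifying $\rho$: the functions $\rho*\psi$ lie in the closure of the span, their derivatives in $w$ at $w=0$ produce all monomials $x^j(\rho*\psi)^{(j)}(b)$, and non-polynomiality gives some $b$ with $(\rho*\psi)^{(j)}(b)\neq0$, so Weierstrass applies. One then lifts to $d_0>1$ via the density of ridge functions $\sum_k h_k(a_k\cdot x)$. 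This yields $\Phi_1$ with $\sup_K|\varphi-\Phi_1|<\varepsilon$.

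\textbf{Step 3 (arbitrary depth $\ell$).} I would prepend $\ell-1$ near-identity blocks. By Step 2 in dimension one, applied to the identity on a compact interval $I$ slightly larger than the range of the preceding layer, there is a one-hidden-layer map $\iota$ with $\sup_I|\iota(t)-t|<\delta$. To stay inside the architecture, I absorb the affine output of each block into the next layer's weight matrix $\mathcal W_{i+1}$. Composing $\ell-1$ such blocks coordinatewise with $\Phi_1$ gives a depth-$\ell$ network. Uniform continuity of $\Phi_1$ on a neighbourhood of $K$ then controls the accumulated error, giving $\sup_K|\Phi-\varphi|<2\varepsilon$ for $\delta$ small. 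The widths needed are finite, which is allowed since the union is over all $m$.

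\textbf{Step 4 (tails), the main obstacle.} We have
\[
\|\Phi-\varphi\|^2_{L^2(\nu)}\le 4\varepsilon^2\nu(K)+\int_{K^c}|\Phi|^2\,d\nu .
\]
The difficulty is that $\Phi$ is not controlled off $K$. For bounded $\rho$ this is immediate: $|\Phi|\le C_\Phi$, but $C_\Phi$ depends on $\varepsilon$ through the weights, so I would choose $K$ only \emph{after} $\Phi$ is fixed, using $\nu(K^c)\to0$. The uniform approximation can be taken on any larger compact containing $\operatorname{supp}\varphi$, but $\Phi$ changes with it. The clean resolution is to first fix $\Phi$ approximating $\varphi$ on a compact $K_0$ where $\varphi=0$ on $K_0\setminus\operatorname{supp}\varphi$. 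Then I enlarge the compact set using the bound $|\Phi(x)|\le C_\Phi(1+|x|)^{p}$, whose constant is fixed once $\Phi$ is fixed, together with the moment assumption. This is exactly where boundedness or moments are needed, and I expect this step to be the delicate point.
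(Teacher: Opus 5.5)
The paper gives no proof of this theorem. It only quotes it from Hornik et al., whose setting uses bounded (squashing) activations. Your caveat is justified. With $\rho$ merely continuous and nonconstant, the statement is false: for $\rho(t)=t$ every network is affine, hence not dense in $L^2$ of Lebesgue measure on $[0,1]^{d_0}$. For unbounded $\rho$, networks need not lie in $L^2(\nu)$ at all. Your Steps 1--3 are fine. The genuine gap is Step 4, where your ``clean resolution'' is circular. Once $\Phi$ is fixed by uniform approximation of $\varphi$ on $K_0$, you have
\[
\|\Phi-\varphi\|_{L^2(\nu)}^2\le \varepsilon^2\nu(K_0)+\int_{K_0^c}|\Phi|^2\,\mathrm{d}\nu ,
\]
and the last integral is a fixed number determined by $\Phi$. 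Enlarging the compact set to $K_1\supset K_0$ only splits it into $\int_{K_1\setminus K_0}|\Phi|^2\,\mathrm{d}\nu+\int_{K_1^c}|\Phi|^2\,\mathrm{d}\nu$. The growth bound $C_\Phi(1+|x|)^p$ and the moments make the second piece small, but not the first. On $K_1\setminus K_0$ you have no approximation property and $\varphi=0$ there, so the error is $|\Phi|$ itself. Taking $K_0$ larger from the outset does not help, because $\Phi$ and $C_\Phi$ then change in an uncontrolled way. What is missing is a bound on the approximant away from the compact set that does not deteriorate as the accuracy improves.

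For bounded $\rho$ and $\ell\geq2$, a clipping layer supplies this bound, and the choices are made in a non-circular order:
\begin{enumerate}
\item Set $M=\sup|\varphi|$ and fix $K\supseteq\operatorname{supp}\varphi$ first, with $(M+1)^2\nu(K^c)<\varepsilon^2$.
\item Take $\Phi_1$ with $\ell-1$ hidden layers and $\sup_K|\Phi_1-\varphi|<\delta\le1$. Because $\rho$ is bounded, $\Phi_1$ is bounded on all of $\mathbb R^{d_0}$ by some $C_1$.
\item Compose with a one-hidden-layer $\chi$ satisfying $\sup_{[-C_1,C_1]}|\chi-c|<\delta$, where $c(y)=\max(-M,\min(M,y))$.
\end{enumerate}
Then $|\chi\circ\Phi_1|\le M+\delta$ everywhere and $|\chi\circ\Phi_1-\varphi|<2\delta$ on $K$. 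Hence $\|\chi\circ\Phi_1-\varphi\|^2_{L^2(\nu)}<4\delta^2\nu(\mathbb R^{d_0})+\varepsilon^2$.

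For $\ell=1$, which the statement includes, no such extra layer is available. The standard route in the literature is then a duality argument. A nonzero $g\in L^2(\nu)$ orthogonal to all networks gives a finite signed measure $g\,\nu$ annihilating every $\rho(w\cdot x+b)$. One then shows, using that $\rho$ is bounded and nonconstant, that such a measure must vanish. Both fixes use boundedness of $\rho$ essentially. Under your weaker hypothesis of polynomial growth plus moments neither applies, so that extension remains unproved.
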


In particular, for every \(f\in L^2(\nu)\) and every \(\varepsilon>0\), there exist an architecture \(m=(m_0,\ldots,m_{\ell-1})\) and a neural network \(\Phi\in\mathcal N_{d_0,d_1,\ell,m}^{\rho}\) such that 
\[ \int_{\mathbb R^{d_0}} |f(x)-\Phi(x)|^2\,\nu(\mathrm{d}x) \leq\varepsilon. \]

The universal approximation theorem ensures density but does not provide an explicit approximation rate. In our error analysis, we need a quantitative approximation result for Lipschitz functions. For this purpose, following \cite{germain}, we consider GroupSort neural networks.

Let \(\kappa\in\mathbb N^*\), \(\kappa\geq2\), be the grouping size, and assume that
$ m_i=\kappa n_i,\, i=0,\ldots,\ell-1,$ 
for some \(n_i\in\mathbb N^*\). The quantity $\displaystyle{\sum_{i=0}^{\ell-1}m_i} $
will be referred to as the width of the network, while \(\ell+1\) is its depth.

We denote $\rho^\kappa = (\rho^\kappa_i)_{i=0,\ldots,\ell-1}$ the sequence of GroupSort activations. For each $i\in\{0,\ldots,\ell-1\}$, $\rho^\kappa_i$ divides its input into groups of size \(\kappa\) and sorts the entries of each group in decreasing order. More precisely, if \(m_i=\kappa n_i\) and \(x=(x_1,\ldots,x_{m_i})\in\mathbb R^{m_i}\), then
\[ \rho^\kappa_i(x) = \Big( \operatorname{sort}(x_1,\ldots,x_\kappa), \ldots, \operatorname{sort} (x_{(n_i-1)\kappa+1},\ldots,x_{n_i\kappa}) \Big).\]

We denote by \(\mathcal S_{d_0,\ell,m}^{\rho^\kappa}\) the class of scalar-valued GroupSort neural networks
\[ \Phi(\cdot,\theta) \in \mathcal N_{d_0,1,\ell,m}^{\rho^\kappa},\qquad \theta = (\mathcal W_0,b_0,\ldots,\mathcal W_\ell,b_\ell), \]

whose parameters satisfy
\begin{equation*}
\sup_{|x|_2=1} |\mathcal W_0x|_\infty \leq1, \qquad \sup_{|x|_\infty=1} |\mathcal W_i x|_\infty \leq1, \quad i=1,\ldots,\ell,
\quad  \text{and} \quad 
|b_j|_\infty\leq B, \qquad j=0,\ldots,\ell,
\end{equation*}
for some \(B>0\). Under these constraints, the corresponding GroupSort neural networks are \(1\)-Lipschitz.

For \(K>0\), we denote by \(\mathcal G_{K,d_0,\ell,m}^{\rho^\kappa}\) the set of functions
$ \Psi:\mathbb R^{d_0}\longrightarrow\mathbb R, $
of the form
\begin{equation*}
\Psi(x) = K\beta\, \Phi\left(\frac{x+\alpha}{\beta}\right), \qquad \Phi\in \mathcal S_{d_0,\ell,m}^{\rho^\kappa}, \quad \alpha\in\mathbb R^{d_0}, \quad \beta>0.
\end{equation*}
We recall also that every function in \(\mathcal G_{K,d_0,\ell,m}^{\rho^\kappa}\) is \(K\)-Lipschitz.

We shall use the following quantitative approximation result for Lipschitz functions, which follows from the GroupSort approximation result recalled in~\cite{germain}.
	
\begin{proposition}
\label{thm:univ approx2}
Let $R>0$ and let $ f:[-R,R]^{d_0}\longrightarrow\mathbb R $ be a \(K\)-Lipschitz function. Then, for every \(\varepsilon>0\), there exists a GroupSort neural network $ g\in \mathcal G_{K,d_0,\ell,m}^{\rho^\kappa} $ such that
\begin{equation*}
\sup_{x\in[-R,R]^{d_0}} |f(x)-g(x)| \leq 2RK\varepsilon.
\end{equation*}
If \(d_0>1\), the network can be chosen with grouping size  $ \kappa = \left\lceil \frac{2\sqrt{d_0}}{\varepsilon} \right\rceil $, depth $ \ell+1=O(d_0^2)$, and width $ \sum_{i=0}^{\ell-1}m_i = O\left( \left( \frac{2\sqrt{d_0}}{\varepsilon} \right)^{d_0^2-1} \right).$ If \(d_0=1\), the same result holds with grouping size $ \kappa = \left\lceil\frac1{\varepsilon}\right\rceil$,  depth $ \ell+1=3$, and width $ \sum_{i=0}^{\ell-1}m_i =  O\left(\frac1{\varepsilon}\right).$
\end{proposition}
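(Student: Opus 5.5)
The plan is a rescaling reduction to the normalized GroupSort result of \cite{germain}, which we take as given in its standard form. That result says: for every $1$-Lipschitz function $\tilde f:[0,1]^{d_0}\to\mathbb R$ (Lipschitz with respect to $|\cdot|_2$) and every $\varepsilon>0$, there is $\Phi\in\mathcal S_{d_0,\ell,m}^{\rho^\kappa}$ with $\sup_{[0,1]^{d_0}}|\tilde f-\Phi|\le \varepsilon$. For $d_0>1$ the grouping size is $\kappa=\lceil 2\sqrt{d_0}/\varepsilon\rceil$, the depth is $O(d_0^2)$ and the width is $O\bigl((2\sqrt{d_0}/\varepsilon)^{d_0^2-1}\bigr)$. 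For $d_0=1$ one takes $\kappa=\lceil 1/\varepsilon\rceil$, depth $3$ and width $O(1/\varepsilon)$.

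\textbf{Step 1: normalization.} Given a $K$-Lipschitz $f$ on $[-R,R]^{d_0}$, set $\beta=2R$ and $\alpha=R\mathbf 1$, so that $x\mapsto (x+\alpha)/\beta$ maps $[-R,R]^{d_0}$ bijectively onto $[0,1]^{d_0}$. Define
\[ \tilde f(u)=\frac{1}{K\beta}\, f(\beta u-\alpha),\qquad u\in[0,1]^{d_0}. \]
Then $|\tilde f(u)-\tilde f(v)|\le \frac{1}{K\beta}K\beta|u-v|=|u-v|$, so $\tilde f$ is $1$-Lipschitz. If $K=0$, $f$ is constant; it is then realized exactly by a network with zero weights and a bias, up to the bias bound $B$, which we take large enough, and this case is trivial.

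\textbf{Step 2: apply the normalized result and scale back.} Apply the normalized result to $\tilde f$ to get $\Phi\in\mathcal S_{d_0,\ell,m}^{\rho^\kappa}$ with $\|\tilde f-\Phi\|_{\infty,[0,1]^{d_0}}\le\varepsilon$. Put $g(x)=K\beta\,\Phi\bigl((x+\alpha)/\beta\bigr)$, which lies in $\mathcal G_{K,d_0,\ell,m}^{\rho^\kappa}$ by definition. For $x\in[-R,R]^{d_0}$, with $u=(x+\alpha)/\beta$, we have $f(x)=K\beta\tilde f(u)$, and therefore
\[ |f(x)-g(x)|=K\beta\,|\tilde f(u)-\Phi(u)|\le 2RK\varepsilon. \]
The rescaling changes neither $\kappa$, nor the depth, nor the width, so the stated architecture sizes carry over verbatim.

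\textbf{Main obstacle.} The difficulty lies in matching conventions with \cite{germain}, not in the scaling. Three points must be checked:
\begin{itemize}
\item the norm used there for Lipschitz continuity is the Euclidean one, consistent with the constraint $\sup_{|x|_2=1}|\mathcal W_0x|_\infty\le1$;
\item the bias bound $B$ accommodates the biases of the constructed network on $[0,1]^{d_0}$;
\item the constant in $\kappa=\lceil 2\sqrt{d_0}/\varepsilon\rceil$ is stated for the unit cube rather than $[-1,1]^{d_0}$.
\end{itemize}
If \cite{germain} is formulated on $[-1,1]^{d_0}$ instead, we would take $\beta=R$, $\alpha=0$. The diameter bookkeeping then changes by a factor $2$, which is exactly absorbed by the $2\sqrt{d_0}$ and $2R$ factors in the statement. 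So we would adjust $\beta$ to whichever cube the reference uses and verify that the final bound is $2RK\varepsilon$.
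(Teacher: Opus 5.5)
Your rescaling argument is correct and is exactly the derivation the paper leaves implicit when it says the proposition follows from the result recalled in \cite{germain}. The class $\mathcal G_{K,d_0,\ell,m}^{\rho^\kappa}$ is defined precisely to encode the change of variables $u=(x+R\mathbf 1)/(2R)$ and the prefactor $K\beta=2RK$, which gives the bound $2RK\varepsilon$ without changing the architecture. The one slip is your $K=0$ aside: every element of $\mathcal G_{0,\dots}$ would be $0\cdot\beta\,\Phi\equiv 0$, so a nonzero constant cannot be realized there; but since the paper defines $\mathcal G_{K,\dots}$ only for $K>0$, that case is simply outside the statement.
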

	
\section{Preliminaries, notations and assumptions}
\label{sec:preliminary}
\subsection{Notations}
Let \((W_t)_{0\leq t\leq T}\) be a \(d\)-dimensional Brownian motion defined on a filtered probability space
$
(\Omega,\mathcal F,(\mathcal F_t)_{0\leq t\leq T},\mathbb P),
$
where \((\mathcal F_t)_{0\leq t\leq T}\) is the standard augmentation of the filtration generated by \(W\).\\

For \(p\geq2\), we introduce the following notation and spaces:
\begin{itemize}
\item For \(x\in\mathbb R^d\), \(d\in\mathbb N^*\), we denote by \(|x|\) its Euclidean norm.

\item For \(t\in[0,T]\), we use the notation
\[ \mathbb E_t[\cdot] := \mathbb E[\cdot\mid\mathcal F_t] \]
for the conditional expectation with respect to \(\mathcal F_t\).

\item \(\mathcal P\) denotes the predictable \(\sigma\)-algebra on \([0,T]\times\Omega\) associated with the filtration \((\mathcal F_t)_{0\leq t\leq T}\).
	
	\item \(\mathcal S_c^{p,d}\) is the space of all \(\mathbb R^d\) valued, adapted, continuous processes \(Y=(Y_t)_{0\leq t\leq T}\) such that
\[ \mathbb E\left[ \sup_{0\leq t\leq T}|Y_t|^p \right]<\infty. \]
	
	\item \(\mathcal L^{p,d}\) is the space of all \(\mathbb R^d\)-valued, \(\mathcal P\)-measurable processes \(Z=(Z_t)_{0\leq t\leq T}\) such that
\[ \mathbb E\left[ \left( \int_0^T|Z_t|^2\mathrm{d}t \right)^{p/2} \right]<\infty. \]
	
	\item \(\mathcal K_c^p\) is the space of all processes \(K\in\mathcal S_c^{p,1}\) such that \(K\) is nondecreasing and \(K_0=0\).
	\item If $\varphi$ is a Lipschitz continuous function, we denote by $[\varphi]$ its Lipschitz constant.
\end{itemize}

\subsection{RBSDE and approximation via penalization}
\subsubsection{Reflected BSDEs in Markovian case}

Throughout this paper, we consider the following forward stochastic differential equation:
\begin{equation}
	\label{SDE}
	X_t
	=
	x_0
	+\int_0^t \mu(s,X_s)\,\mathrm{d}s
	+\int_0^t \sigma(s,X_s)\,\mathrm{d}W_s,
	\qquad 0\leq t\leq T,
\end{equation}
where
\[
\mu:[0,T]\times\mathbb{R}^d\longrightarrow\mathbb{R}^d,
\qquad
\sigma:[0,T]\times\mathbb{R}^d\longrightarrow\mathbb{R}^{d\times d}
\]
are measurable functions and \(x_0\in\mathbb{R}^d\).

We are also given measurable functions
\[
f:[0,T]\times\mathbb{R}^d\times\mathbb{R}\times\mathbb{R}^d
\longrightarrow\mathbb{R},
\qquad
h:[0,T]\times\mathbb{R}^d\longrightarrow\mathbb{R},
\qquad
g:\mathbb{R}^d\longrightarrow\mathbb{R},
\]
representing, respectively, the driver, the obstacle, and the terminal condition.

We consider the Markovian reflected backward stochastic differential equation (RBSDE) associated with the forward process \(X\), the driver \(f\), the terminal condition \(g(X_T)\), and the obstacle \(h(t,X_t)\). A solution is a triplet of stochastic processes \((Y,Z,K)\) satisfying
\begin{equation}
	\label{RBSDE}
	\begin{cases}
		(Y,Z,K)\in
		\mathcal{S}_c^{2,1}\times
		\mathcal{L}^{2,d}\times
		\mathcal{K}_c^2,
		\\[2mm]
		\displaystyle
		Y_t
		=
		g(X_T)
		+\int_t^T f(s,X_s,Y_s,Z_s)\,\mathrm{d}s
		+K_T-K_t
		-\int_t^T Z_s\,\mathrm{d}W_s,
		\qquad 0\leq t\leq T,
		\\[2mm]
		Y_t\geq h(t,X_t),
		\qquad 0\leq t\leq T,\quad \mathbb{P}\text{-a.s.},
		\\[2mm]
		\displaystyle
		\int_0^T
		\bigl(Y_t-h(t,X_t)\bigr)\,\mathrm{d}K_t
		=0.
	\end{cases}
\end{equation}

We impose the following assumptions on the data.

\begin{enumerate}
	\mylabel{H\textsubscript{L}}\label{HL}
	
	\item \label{HL_1}
	There exist constants \(C>0\) and \([\mu],[\sigma]\geq0\) such that,
	for all \(t_1,t_2\in[0,T]\) and \(x_1,x_2\in\mathbb R^d\),
	\[
	|\mu(t_1,x_1)-\mu(t_2,x_2)|
	\leq
	C|t_1-t_2|^{1/2}
	+
	[\mu]|x_1-x_2|,
	\]
	and
	\[
	|\sigma(t_1,x_1)-\sigma(t_2,x_2)|
	\leq
	C|t_1-t_2|^{1/2}
	+
	[\sigma]|x_1-x_2|.
	\]
	
	\item \label{HL_2}
	The coefficients of the forward equation satisfy
	\[
	\sup_{0\leq t\leq T}|\mu(t,0)|
	+
	\sup_{0\leq t\leq T}|\sigma(t,0)|
	<+\infty.
	\]
	
	\item \label{HL_3}
	There exist constants \(C>0\) and \([f]\geq0\) such that, for all
	$
	(t_1,x_1,y_1,z_1),
	(t_2,x_2,y_2,z_2)
	\in
	[0,T]\times\mathbb R^d\times\mathbb R\times\mathbb R^d,
$
	\[
	\begin{aligned}
		&
		|f(t_1,x_1,y_1,z_1)-f(t_2,x_2,y_2,z_2)|
		\leq
		C|t_1-t_2|^{1/2}
		+
		[f]\bigl(
		|x_1-x_2|
		+
		|y_1-y_2|
		+
		|z_1-z_2|
		\bigr).
	\end{aligned}
	\]
	
	\item \label{HL_4}
	The generator satisfies
	\[
	\sup_{0\leq t\leq T}
	|f(t,0,0,0)|
	<+\infty.
	\]
	
	\item \label{HL_5}
	There exist constants \(C>0\) and \([h]\geq0\) such that, for all
	\(t_1,t_2\in[0,T]\) and \(x_1,x_2\in\mathbb R^d\),
	\[
	|h(t_1,x_1)-h(t_2,x_2)|
	\leq
	C|t_1-t_2|^{1/2}
	+
	[h]|x_1-x_2|.
	\]
	
	\item \label{HL_6}
	There exists a constant \([g]\geq0\) such that, for all
	\(x_1,x_2\in\mathbb R^d\),
	\[
	|g(x_1)-g(x_2)|
	\leq
	[g]|x_1-x_2|.
	\]
	
	\item \label{HL_7}
	The terminal condition is compatible with the obstacle:
	\[
	h(T,x)\leq g(x),
	\qquad
	x\in\mathbb R^d.
	\]
	
\end{enumerate}

It is well known that under Hypotheses \ref{HL}(\ref{HL_1},\ref{HL_2}), the SDE \eqref{SDE} admits a unique strong solution $X\in \mathcal{S}^{2,d}_c$. Moreover, for every \(p\geq 2\),
\begin{equation}\label{Xmoment}
\mathbb{E}\left[\sup_{0\leq t\leq T}|X_t|^p\right]\leq C_p(1+|x_0|^p),
\end{equation}
and
\begin{equation}
\mathbb E\left[|X_t-X_s|^p\right] \leq C_p(1+|x_0|^p)|t-s|^{p/2},\qquad 0\leq s<t\leq T,
\label{eq:standard_X_estimate}
\end{equation}
where \(C_p\) depends only on \(p\), \(T\), and the coefficients of the forward SDE.

On the other hand, it is known from \cite{karoui} that, under \ref{HL}, there exists a triplet of stochastic processes $(Y,Z,K)$ satisfying the RBSDE \eqref{RBSDE}.

\subsubsection{Existence via penalization}
For $\lambda>0$, we consider the following penalized BSDE (PBSDE)
\begin{eqnarray}
	Y^\lambda_t &= g(X_T) + \displaystyle\int_t^Tf(s,X_s,Y^\lambda_s,Z^\lambda_s)\mathrm{d}s + \lambda\int_t^T(Y_s^\lambda-h(s,X_s))^-\mathrm{d}s - \int_t^TZ^\lambda_s\mathrm{d}W_s \nonumber\\
	&= g(X_T) + \displaystyle\int_t^Tf^\lambda(s,X_s,Y^\lambda_s,Z^\lambda_s)\mathrm{d}s - \int_t^TZ^\lambda_s\mathrm{d}W_s ,\label{PBSDE}
\end{eqnarray}

where 
$$f^\lambda(t,x,y,z) = f(t,x,y,z) + \lambda(y-h(s,x))^-.$$

Equivalently, by defining
$$ K_t^\lambda := \lambda\int_0^t(Y_s^\lambda-h(s,X_s))^-\mathrm{d}s ,$$

we can write
\begin{equation*}
Y_t^\lambda = g(X_T) + \int_t^T f(s,X_s,Y_s^\lambda,Z^\lambda_s)\,\mathrm{d}s + K_T^\lambda-K_t^\lambda - \int_t^T Z_s^\lambda\,\mathrm{d}W_s.
\end{equation*}

From \cite{karoui,gobet}, under \ref{HL}, the solution $(Y^\lambda,Z^\lambda,K^\lambda)$ of penalized BSDE \eqref{PBSDE} satisfies the following uniform estimates
\begin{equation}
\sup_{\lambda\geq1} \mathbb E \left[ \sup_{0\leq t\leq T}|Y_t^\lambda|^2 + \int_0^T|Z_s^\lambda|^2\mathrm{d}s + |K_T^\lambda|^2 \right] \leq C,
\label{eq:uniform-penalization}
\end{equation}
		
and the penalized solution converges to the solution of the RBSDE as \(\lambda\to+\infty\), namely
$$ \lim_{\lambda\to+\infty}\mathbb{E}\left[\sup_{0\leq  t\leq  T}|Y_t^\lambda - Y_t|^2 + \int_0^T|Z_t^\lambda - Z_t|^2\mathrm{d}t + \sup_{0\leq  t\leq  T}|K_t^\lambda - K_t|^2\right] = 0.$$

On the other hand, from the theory of Markovian BSDEs, for fixed $\lambda>0$, we know that the process $Y^\lambda$ is a deterministic function of $X$, namely we have 
$$Y_t^\lambda = u^\lambda(t,X_t) \;\; , \;\; t\in[0,T] $$
for some deterministic function $ u^\lambda : [0,T]\times\mathbb{R}^d\rightarrow \mathbb{R}$ (see e.g. \cite{zhang book}). This function satisfies the Markov representation
$$ u^\lambda(t,x) = Y^{\lambda, t, x}_t , \qquad (t,x) \in[0,T]\times \mathbb{R}^d, $$

where $Y^{\lambda, t,x}$ is the solution of PBSDE \eqref{PBSDE} associated to the forward process $X^{t,x}$ that starts from $x$ at time $t$. Moreover, $u^\lambda$ is associated to the following semi-linear parabolic PDE

\begin{equation*} 
\left\{
\begin{aligned}
\displaystyle \partial_t u(t,x) + \mathcal{L} u(t,x) + f(t,x,u(t,x),\sigma^\top D_x u(t,x)) + \lambda(u(t,x)-h(t,x))^- &= 0 \; , \; (t,x)\in[0,T[\times\mathbb{R}^d\\
u(T,x) &= g(x) \; , \; x\in\mathbb{R}^d,
\end{aligned}
\right.
\end{equation*}

where $\mathcal{L}$ denotes the infinitesimal generator associated with the forward diffusion
$$ \mathcal{L}u = \frac{1}{2}\text{Tr}(\sigma \sigma^\top D_x^2 u) + \mu\cdot D_x u .$$

Since the complexity and approximation error of the GroupSort networks introduced in Section~\ref{sec:convergence} depend explicitly on the Lipschitz constant of the target function, it is important to determine how the spatial Lipschitz regularity of the penalized value function \(u^\lambda(t,\cdot)\) depends on the penalization parameter \(\lambda\). The following proposition shows, in fact, that the family of penalized value functions is uniformly Lipschitz continuous with respect to the state variable. Thus, although the Lipschitz constant of the penalized generator \(f^\lambda\) may grow with \(\lambda\), this growth does not propagate to the spatial Lipschitz constant of the corresponding value function $u^\lambda$.
\begin{proposition}
	\label{thm:lip}
	Assume \ref{HL}(\ref{HL_1}--\ref{HL_6}). Then there exists a constant \(C>0\), such that for every 
	\(t\in[0,T]\), \(x,x'\in\mathbb R^d\), and \(\lambda>0\), 
	\begin{equation*}
		|u^\lambda(t,x)-u^\lambda(t,x')|
		\leq
		C|x-x'|.
	\end{equation*}
	In particular, $[u^\lambda]\leq C$, $\lambda>0.$
\end{proposition}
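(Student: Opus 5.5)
We fix \(\lambda>0\), \(t\in[0,T]\) and \(x,x'\in\mathbb R^d\), and compare the two penalized solutions \((Y,Z):=(Y^{\lambda,t,x},Z^{\lambda,t,x})\) and \((Y',Z'):=(Y^{\lambda,t,x'},Z^{\lambda,t,x'})\), driven by the forward processes \(X:=X^{t,x}\) and \(X':=X^{t,x'}\) on \([t,T]\). Since \(u^\lambda(t,x)-u^\lambda(t,x')=Y_t-Y'_t\) is deterministic, it suffices to bound \(\mathbb E|Y_t-Y'_t|^2\) by \(C|x-x'|^2\) with \(C\) independent of \(\lambda\). The standard SDE stability estimate under \ref{HL}(\ref{HL_1},\ref{HL_2}) gives
\[
\mathbb E\Bigl[\sup_{t\le s\le T}|X_s-X'_s|^2\Bigr]\le C|x-x'|^2 ,
\]
and this is the only source of discrepancy between the two equations.

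Set \(\delta Y=Y-Y'\), \(\delta Z=Z-Z'\), and apply Itô's formula to \(e^{\beta s}|\delta Y_s|^2\) on \([t,T]\). The driver difference splits into three parts. The first is the \(f\)-part, which is Lipschitz in \((x,y,z)\) with constant \([f]\) independent of \(\lambda\). The second is \(\lambda\bigl[(Y_s-h(s,X'_s))^- -(Y'_s-h(s,X'_s))^-\bigr]\), the penalty difference at the same obstacle. The third is \(\lambda\bigl[(Y_s-h(s,X_s))^- -(Y_s-h(s,X'_s))^-\bigr]\), the obstacle-shift term. The second part is handled by monotonicity: \(y\mapsto\lambda(y-k)^-\) is nonincreasing, so \(\delta Y_s\) times this difference is \(\le0\) and it can be dropped. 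The \(f\)-part is handled with Young's inequality and a choice of \(\beta\) depending only on \([f]\), absorbing \(|\delta Z|^2\) on the left-hand side.

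The main obstacle is the third term. It is bounded by \(\lambda[h]|X_s-X'_s|\), so a naive Young estimate would produce \(\lambda^2|x-x'|^2\), which is not uniform in \(\lambda\). To avoid this, I would avoid the direct \(L^2\) energy estimate and instead argue pathwise through a comparison or linearization. Because \(y\mapsto\lambda(y-k)^-\) is Lipschitz with derivative \(-\lambda\mathbf 1_{\{y<k\}}\le0\), I would write \(\delta Y\) as the solution of a linear BSDE with coefficients \(a_s\in[-[f],[f]]\) (from \(y\)), \(b_s\) with \(|b_s|\le[f]\) (from \(z\)), and \(-\lambda\gamma_s\) with \(\gamma_s\in[0,1]\) (from the penalty in \(y\)), plus a source term. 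The source consists of the \(x\)-increment of \(f\) and the obstacle increment \(\lambda\bigl[(Y'_s-h(s,X_s))^- -(Y'_s-h(s,X'_s))^-\bigr]\). The latter is better linearized as \(\lambda\eta_s\bigl(h(s,X_s)-h(s,X'_s)\bigr)\), where \(\eta_s\in[0,1]\) is the difference quotient of \(r\mapsto r^-\) evaluated at the two arguments \(Y'_s-h(s,X_s)\) and \(Y'_s-h(s,X'_s)\). More precisely, I would group the whole penalty increment as \(\lambda\bigl[(Y_s-h(s,X_s))^- -(Y'_s-h(s,X'_s))^-\bigr]=-\lambda\eta_s\bigl(\delta Y_s-(h(s,X_s)-h(s,X'_s))\bigr)\), with a single \(\eta_s\in[0,1]\).

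The explicit representation with the discount factor \(\Gamma_{s}=\exp\bigl(\int_t^s(a_r-\lambda\eta_r)\,dr\bigr)\) under a Girsanov measure \(\mathbb Q\) (defined by \(b\)) then gives
\[
|\delta Y_t|\le \mathbb E^{\mathbb Q}\Bigl[\Gamma_T[g]|\delta X_T|+\int_t^T\Gamma_s\bigl([f]+\lambda\eta_s[h]\bigr)|\delta X_s|\,ds\Bigr].
\]
The crucial point is that
\[
\int_t^T\lambda\eta_s e^{-\int_t^s\lambda\eta_r\,dr}\,ds\le1 ,
\]
so the \(\lambda\)-weighted term is bounded by \(e^{[f]T}[h]\sup_s|\delta X_s|\) without any \(\lambda\)-dependence. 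The argument concludes by taking \(\mathbb Q\)-expectations, using that the Girsanov density has moments controlled by \([f]\), applying Hölder's inequality, and invoking the SDE estimate above under \(\mathbb Q\) (or \(L^4\) moments under \(\mathbb P\)). This yields \(|u^\lambda(t,x)-u^\lambda(t,x')|\le C|x-x'|\) with \(C\) depending only on \(T,[f],[g],[h],[\mu],[\sigma]\).
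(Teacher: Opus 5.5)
Your proof is correct, and it rests on the same cancellation as the paper's proof, namely $\int_t^T\lambda\eta_s e^{-\int_t^s\lambda\eta_r\,dr}\,ds\le1$, but you reach it by a different route.

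\textbf{How the paper proceeds.} The paper linearizes exactly as in your grouped form: its $\alpha_r^\lambda\in[-\lambda,0]$ is your $-\lambda\eta_r$ and multiplies $\delta Y_r-\delta h_r$. It then keeps an $L^2$ energy estimate. It applies It\^o's formula to $\Theta_s|\delta Y_s|^2$ with the \emph{random} weight $\Theta_s=\exp\bigl(c\,(s-t)+\int_t^s\alpha_r^\lambda\,dr\bigr)$, where $c$ depends only on $[f]$. With this weight the penalty terms combine into $\alpha_r^\lambda|\delta Y_r-\delta h_r|^2-\alpha_r^\lambda|\delta h_r|^2\le-\alpha_r^\lambda|\delta h_r|^2$. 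The $z$-dependence of $f$ is absorbed by Young's inequality into $\int_t^T\Theta_r|\delta Z_r|^2\,dr$. So your observation that the energy method breaks down is only true for a deterministic weight $e^{\beta s}$.

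\textbf{What your route costs and buys.} You replace this by the explicit representation of the linear BSDE under a Girsanov measure $\mathbb Q$. This needs the representation theorem, which is legitimate here: for fixed $\lambda$ all coefficients are bounded (by $[f]+\lambda$), while your estimate only uses $a_r\le[f]$ and $\eta_r\ge0$. It also needs the $\lambda$-free density bound $\mathbb E[\mathcal E_T^2]\le e^{[f]^2T}$. In exchange you get a transparent first-moment bound under $\mathbb Q$ with an explicit constant. This is in fact the mechanism the paper itself uses in Step~1 of Proposition~\ref{prop:p} for $\nabla Y^{\lambda,\varepsilon}$, where no change of measure is needed because that driver does not depend on $z$. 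The weighted energy argument used for the present proposition avoids Girsanov and the martingale verification behind the representation.

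\textbf{One point to make explicit.} Grouping the whole penalty increment under a single quotient $\eta_s$ is essential. With your first ``split'' linearization, the quotient in the discount factor and the one in the source term differ, and the integral identity no longer closes.
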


\begin{proof}
	Fix \(t\in[0,T]\) and \(x,x'\in\mathbb R^d\). Let
$
	(X^{t,x},Y^{\lambda,t,x},Z^{\lambda,t,x})
	\quad\text{and}\quad
	(X^{t,x'},Y^{\lambda,t,x'},Z^{\lambda,t,x'})
$
	denote the solutions of the forward SDE, starting from \(x\) and \(x'\), respectively, at time \(t\), and the corresponding penalized BSDEs.
	
	For \(s\in[t,T]\), set
	\[
	\delta X_s
	:=
	X_s^{t,x}-X_s^{t,x'},
	\qquad
	\delta Y_s
	:=
	Y_s^{\lambda,t,x}-Y_s^{\lambda,t,x'},
	\qquad
	\delta Z_s
	:=
	Z_s^{\lambda,t,x}-Z_s^{\lambda,t,x'},
	\]
	and
	\[
	\delta g_T
	:=
	g(X_T^{t,x})-g(X_T^{t,x'}),
	\qquad
	\delta h_s
	:=
	h(s,X_s^{t,x})-h(s,X_s^{t,x'}).
	\]
	
	The difference $\delta Y$ of the two penalized BSDEs satisfies
	\begin{align}
		\delta Y_s
		={}&
		\delta g_T
		+
		\int_s^T
		\Big[
		f(r,X_r^{t,x},Y_r^{\lambda,t,x},Z_r^{\lambda,t,x})
		-
		f(r,X_r^{t,x'},Y_r^{\lambda,t,x'},Z_r^{\lambda,t,x'})
		\Big]\mathrm{d}r
		\nonumber\\
		&+
		\lambda
		\int_s^T
		\Big[
		(Y_r^{\lambda,t,x}-h(r,X_r^{t,x}))^-
		-
		(Y_r^{\lambda,t,x'}-h(r,X_r^{t,x'}))^-
		\Big]\mathrm{d}r
	-
		\int_s^T
		\delta Z_r\,\mathrm{d}W_r.
		\label{eq:delta-BSDE-lip}
	\end{align}
	
	We linearize the generator. Define
	\[
	\beta_r
	:=
	\frac{
		f(r,X_r^{t,x},Y_r^{\lambda,t,x},Z_r^{\lambda,t,x})
		-
		f(r,X_r^{t,x},Y_r^{\lambda,t,x'},Z_r^{\lambda,t,x})
	}{
		\delta Y_r
	}
	\mathbf 1_{\{\delta Y_r\neq0\}},
	\]
	and
	\[
	\rho_r
	:=
	f(r,X_r^{t,x},Y_r^{\lambda,t,x'},Z_r^{\lambda,t,x'})
	-
	f(r,X_r^{t,x'},Y_r^{\lambda,t,x'},Z_r^{\lambda,t,x'}).
	\]
	
	For the \(z\)-variable, define
	\begin{align}
		\gamma_r
		:={}&
		\frac{
			f(r,X_r^{t,x},Y_r^{\lambda,t,x'},Z_r^{\lambda,t,x})
			-
			f(r,X_r^{t,x},Y_r^{\lambda,t,x'},Z_r^{\lambda,t,x'})
		}{
			|\delta Z_r|^2
		}
		\delta Z_r
		\nonumber \times
		\mathbf 1_{\{|\delta Z_r|\neq0\}}.
		\label{eq:gamma-definition}
	\end{align}
	Then
	\[
	\gamma_r\delta Z_r = \langle\gamma_r,\delta Z_r\rangle
	=
	f(r,X_r^{t,x},Y_r^{\lambda,t,x'},Z_r^{\lambda,t,x})
	-
	f(r,X_r^{t,x},Y_r^{\lambda,t,x'},Z_r^{\lambda,t,x'}).
	\]
	
	By \ref{HL},
	\begin{equation}
		|\beta_r|
		\leq
		[f],
		\qquad
		|\gamma_r|
		\leq
		[f],
		\qquad
		|\rho_r|
		\leq
		[f]|\delta X_r|.
		\label{eq:beta-gamma-rho-bounds}
	\end{equation}
	
	For the penalization term, define
	\begin{align}
		\alpha_r^\lambda
		:={}&
		\lambda
		\frac{
			(Y_r^{\lambda,t,x}-h(r,X_r^{t,x}))^-
			-
			(Y_r^{\lambda,t,x'}-h(r,X_r^{t,x'}))^-
		}{
			\delta Y_r-\delta h_r
		}
		\nonumber 
		\mathbf 1_{\{\delta Y_r-\delta h_r\neq0\}}.
		\label{eq:alpha-lambda-definition}
	\end{align}
	Since the mapping $r\longmapsto r^-$
	is non-increasing and \(1\)-Lipschitz, we have
	\begin{equation*}
		-\lambda
		\leq
		\alpha_r^\lambda
		\leq
		0.
	\end{equation*}
	
	Consequently, \eqref{eq:delta-BSDE-lip} can be written as
	\begin{equation*}
		\delta Y_s
		=
		\delta g_T
		+
		\int_s^T
		\left[
		\beta_r\delta Y_r
		+
		\gamma_r\delta Z_r
		+
		\rho_r
		+
		\alpha_r^\lambda(\delta Y_r-\delta h_r)
		\right]\mathrm{d}r
		-
		\int_s^T
		\delta Z_r\,\mathrm{d}W_r.
	\end{equation*}
Let $\eta>0$, and define
	\begin{equation*}
		\Theta_s
		:=
		\exp\left(
		\eta(s-t)
		+
		\int_t^s
		\alpha_r^\lambda\,\mathrm{d}r
		\right),
		\qquad
		s\in[t,T].
	\end{equation*}
	Since \(\alpha_r^\lambda\leq0\),
	\begin{equation}
		0<\Theta_s \leq e^{\eta T}\exp{\left(\int_t^s\alpha^\lambda_r\mathrm{d}r\right)}\leq e^{\eta T},
		\qquad s\in[t,T].
		\label{eq:Theta-bound}
	\end{equation}
	Applying It\^o's formula to $\Theta_s|\delta Y_s|^2$ between \(t\) and \(T\), we obtain
	\begin{align}
		\label{eq:Ito-uniform-lip}
		|\delta Y_t|^2
		+
		\int_t^T
		\Theta_r|\delta Z_r|^2\,\mathrm{d}r
		={}&
		\Theta_T|\delta g_T|^2
	+
		\int_t^T
		\Theta_r
		\Big[
		-(\eta+\alpha_r^\lambda)|\delta Y_r|^2
		+
		2\beta_r|\delta Y_r|^2
	\\
		&
		+
		2\delta Y_r
		\gamma_r\delta Z_r
		+
		2\rho_r\delta Y_r
		\nonumber
		+
		2\alpha_r^\lambda
		\delta Y_r(\delta Y_r-\delta h_r)
		\Big]\mathrm{d}r
		-
		2\int_t^T
		\Theta_r\delta Y_r\delta Z_r\,\mathrm{d}W_r.
	\end{align}
	
By \eqref{eq:beta-gamma-rho-bounds}, we have
\begin{equation*}
	2|\delta Y_r|
	\left| \gamma_r\delta Z_r
	\right|
	\leq
	2[f]|\delta Y_r||\delta Z_r|
	\leq
	\frac12|\delta Z_r|^2
	+
	2[f]^2|\delta Y_r|^2.
\end{equation*}
Similarly,
	\begin{equation*}
		2|\rho_r||\delta Y_r|
		\leq
		|\rho_r|^2
		+
		|\delta Y_r|^2.
	\end{equation*}
	For the penalization term, since $\alpha_r^\lambda\leq0$, we have
	\begin{align*}
		-\alpha_r^\lambda|\delta Y_r|^2
		+
		2\alpha_r^\lambda
		\delta Y_r(\delta Y_r-\delta h_r)
		&=
		\alpha_r^\lambda|\delta Y_r|^2
		-
		2\alpha_r^\lambda\delta Y_r\delta h_r
		=
		\alpha_r^\lambda
		|\delta Y_r-\delta h_r|^2
		-
		\alpha_r^\lambda|\delta h_r|^2
		\\
		&\leq
		-\alpha_r^\lambda|\delta h_r|^2,
	\end{align*}
	Combining these estimates and taking $\eta=2[f]+2[f]^2+1$, we obtain
\begin{align}
-(\eta+\alpha_r^\lambda)|\delta Y_r|^2 + 2\beta_r|\delta Y_r|^2 + & 2\delta Y_r \gamma_r\delta Z_r + 2\rho_r\delta Y_r+ 2\alpha_r^\lambda \delta Y_r(\delta Y_r-\delta h_r) \nonumber\\
	&\qquad\leq\frac12|\delta Z_r|^2 + |\rho_r|^2 - \alpha_r^\lambda|\delta h_r|^2.
\label{eq:drift-final}
\end{align}
	
Substituting \eqref{eq:drift-final} into \eqref{eq:Ito-uniform-lip}, and taking expectation after using a localization procedure, we obtain
\begin{align}
\mathbb E\left[|\delta Y_t|^2\right] + \frac12 \mathbb E\left[ \int_t^T \Theta_r|\delta Z_r|^2\mathrm{d}r\right] &\leq \mathbb E \left[ \Theta_T|\delta g_T|^2 \right] +  \mathbb E\left[\int_t^T \Theta_r|\rho_r|^2\,\mathrm{d}r \right]\nonumber\\
	&+ \mathbb E \left[\int_t^T \Theta_r (-\alpha_r^\lambda)| \delta h_r|^2\mathrm{d}r\right].
\label{eq:Y-delta-three-terms}
\end{align}
By using the Lipschitz continuity of \(g\), \(h\), and \(f\), and  \eqref{eq:Theta-bound}, the first two terms on the right-hand side of \eqref{eq:Y-delta-three-terms} satisfy
\begin{equation}
\mathbb E \left[ \Theta_T|\delta g_T|^2 \right] + \mathbb E\left[ \int_t^T \Theta_r|\rho_r|^2\mathrm{d}r\right] \leq C \mathbb E \left[ \sup_{t\leq r\leq T} |\delta X_r|^2 \right].
\label{eq:first-two-terms}
\end{equation}
	
	For the last term, using \eqref{eq:Theta-bound}, we obtain
\begin{equation*}
\int_t^T \Theta_r (-\alpha_r^\lambda) |\delta h_r|^2\mathrm{d}r \leq C[h]^2 \sup_{t\leq r\leq T} |\delta X_r|^2 \int_t^T (-\alpha_r^\lambda) \exp\left( \int_t^r \alpha_u^\lambda\,\mathrm{d}u \right)\mathrm{d}r.
\end{equation*}
	
Since
\begin{align}
			\int_t^T
		(-\alpha_r^\lambda)
		\exp\left(
		\int_t^r
		\alpha_u^\lambda\,\mathrm{d}u
		\right)
		\mathrm{d}r
		\nonumber=
		1-
		\exp\left(
		\int_t^T
		\alpha_u^\lambda\,\mathrm{d}u
		\right)
		\leq
		1,
	\end{align}
it follows that,
	\begin{equation}
		\mathbb E
		\left[\int_t^T
		\Theta_r
		(-\alpha_r^\lambda)
		|\delta h_r|^2\mathrm{d}r\right]
		\leq
		C
		\mathbb E
		\left[
		\sup_{t\leq r\leq T}
		|\delta X_r|^2
		\right],
		\label{eq:penalty-final}
	\end{equation}
	where the constant \(C\) is independent of \(\lambda\).
	
	Combining \eqref{eq:Y-delta-three-terms},
	\eqref{eq:first-two-terms}, and
	\eqref{eq:penalty-final}, we obtain
	\begin{equation*}
		\mathbb E\left[|\delta Y_t|^2\right]
		\leq
		C
		\mathbb E
		\left[
		\sup_{t\leq r\leq T}
		|\delta X_r|^2
		\right].
	\end{equation*}
	
	Finally, under \ref{HL}(\ref{HL_1},\ref{HL_2}), the standard stability estimate for the forward SDE gives
	\begin{equation*}
		\mathbb E\left[|\delta Y_t|^2\right]
		\leq
		C|x-x'|^2.
	\end{equation*}
	
	By the Markovian representation, we have
	\[
	|u^\lambda(t,x)-u^\lambda(t,x')|^2
	\leq
	C|x-x'|^2.
	\]
	Taking square roots, and changing the constant \(C\) if necessary,
	we conclude that
	\[
	|u^\lambda(t,x)-u^\lambda(t,x')|
	\leq
	C|x-x'|,
	\]
	where \(C\) is independent of \(\lambda\). This completes the proof.
\end{proof}

\section{Deep Backward Dynamic Programming scheme for penalized BSDE}
\label{sec:deep_scheme}

\subsection{Discrete-time approximation}
\label{sec:euler approximation}

Throughout the remainder of the paper, we assume that \(f\) does not
depend on \(z\), that is,
$$ f(t,x,y,z)=f(t,x,y), \qquad (t,x,y,z)\in [0,T]\times\mathbb R^d\times\mathbb R\times\mathbb R^d.$$

We consider a time discretization, which is with no loss of generality, supposed to be uniform
$$ \pi:\quad 0=t_0<t_1<\cdots<t_N=T, $$
and set
$$ \Delta t_i=t_{i+1}-t_i, \qquad \Delta W_i=W_{t_{i+1}}-W_{t_i}, \qquad |\pi|=\frac{T}{N}.$$

The Euler approximation \(X^\pi\) of the diffusion process \(X\) is
defined recursively by
$$ X_{i+1}^\pi = X_{i}^\pi + \mu(t_i,X_i^\pi)\Delta t_i + \sigma(t_i,X_i^\pi)\Delta W_{i}, \qquad X_0^\pi=x_0, $$
for \(i=0,\ldots,N-1\). Moreover, under \ref{HL}(\ref{HL_1},\ref{HL_2}), the standard Euler estimates yield (see \cite{platen})
\begin{equation}
\label{platen}
\max_{0\leq i\leq N-1} \mathbb E\left[ |X_{t_i}-X_i^\pi|^2 + \sup_{t_i\leq t\leq t_{i+1}} |X_t-X_i^\pi|^2 \right] \leq C|\pi|,
\end{equation}
for some constant \(C>0\) independent of \(|\pi|\). Fix \(\lambda>0\). For \(i\in\{0,\ldots,N-1\}\), the penalized BSDE
satisfies
\begin{align}
Y_{t_i}^\lambda &= Y_{t_{i+1}}^\lambda + \int_{t_i}^{t_{i+1}} f^\lambda(s,X_s,Y_s^\lambda)\mathrm{d}s - \int_{t_i}^{t_{i+1}} Z_s^\lambda\,\mathrm{d}W_s \nonumber\\
	&\approx  Y_{t_{i+1}}^\lambda + f^\lambda(t_i,X_{t_i}, Y_{t_{i}}^\lambda)\Delta t_i -  Z_{t_{i}}^\lambda\Delta W_i. \label{BSDE_increment}
\end{align}

Taking conditional expectation with respect to \(\mathcal F_{t_i}\) in the first equation gives
$$ Y_{t_i}^\lambda = \mathbb E_{t_i} \left[ Y_{t_{i+1}}^\lambda + \int_{t_i}^{t_{i+1}} f^\lambda(s,X_s,Y_s^\lambda)\mathrm{d}s \right].
$$

The backward Euler approximation consists in freezing the generator at \(t_i\), \(X_i^\pi\), and the value at time \(t_i\). This formally leads to the implicit relation
$$ Y_{t_i}^\lambda \approx \mathbb E_{t_i} \left[ Y_{t_{i+1}}^\lambda \right] + f^\lambda(t_i,X_i^\pi,Y_{t_i}^\lambda)\Delta t_i.$$

For the $Z^\lambda$-component, multiplying the BSDE increment \eqref{BSDE_increment} by \(\Delta W_i\) and taking conditional expectation we write
$$ Z_{t_i}^\lambda \approx \frac{1}{\Delta t_i} \mathbb{E}_{t_i} \left[ Y_{t_{i+1}}^\lambda \Delta W_i \right], $$

which motivates the discrete-time approximation
\begin{equation}
\label{Z_tilde}
\widetilde Z_i^\lambda := \frac{1}{\Delta t_i} \mathbb{E}_{t_i} \left[ Y_{t_{i+1}}^\lambda \Delta W_i \right].
\end{equation}
Notice that \(\widetilde Z_i^\lambda\) should be interpreted as an approximation of the time average of \(Z^\lambda\) over \([t_i,t_{i+1}]\), rather than, in general, as the point-wise value \(Z_{t_i}^\lambda\).\\

In the Markovian setting, it is natural to represent $\widetilde Z_i^\lambda$ as
$ \widetilde Z_i^\lambda = \widetilde z_i^\lambda(X_{t_i}), $
where
\begin{equation*}
\widetilde z_i^\lambda(x) = \frac{1}{\Delta t_i} \mathbb{E} \left[ u^\lambda(t_{i+1},X_{t_{i+1}}) \Delta W_i\,\middle|\, X_{t_i}=x \right].
\end{equation*}

\subsection{DBDP for the penalized BSDE}
\label{subsec:DBDP-penalized}

We now combine the Euler approximation of the forward process with a
neural-network approximation of the Markovian value functions
\(u^\lambda(t_i,\cdot)\). Under the standing assumption that \(f\) is independent of \(z\), no neural network is introduced for the control process \(Z^\lambda\), and the approximation of the value component can be trained independently of the control component. This leads to a reduced DBDP scheme involving only one neural network at each time step. At each time level \(t_i\), a network $\mathcal U(\cdot;\theta): \mathbb R^d\longrightarrow\mathbb R, $ belonging to the class
$\mathcal N_{d,1,\ell,m}^{\rho}$ is used to approximate the function
\(u^\lambda(t_i,\cdot)\). We denote this approximation by $\mathcal{U}_i^\lambda$.\\

Suppose that \(\mathcal U_{i+1}^\lambda\) has already been constructed.
For \(\theta\in\Theta_m\), define
\begin{equation}
\label{lossY}
L_i^\lambda(\theta) := \mathbb E\Big[ \big| \mathcal U_{i+1}^\lambda(X_{i+1}^\pi) - \mathcal U(X_{i}^\pi;\theta) + f^\lambda \big( t_i, X_{i}^\pi, \mathcal U(X_{i}^\pi;\theta) \big) \Delta t_i \big|^2 \Big].
\end{equation}

The parameter \(\theta_{i,\lambda}^*\) of network $\mathcal U_{i}^\lambda$ is obtained by minimizing the loss $L_i^\lambda$, that is
\begin{equation}
\label{minimisation_problem}
\theta_{i,\lambda}^* \in \underset{\theta\in\Theta_m}{\operatorname{arg\,min}}\,	L_i^\lambda(\theta).
\end{equation}

In practical implementations, the expectation in \eqref{lossY} is
replaced by an empirical average over simulated trajectories, and the
minimization problem \eqref{minimisation_problem} is solved approximately
using stochastic gradient methods.

The resulting backward algorithm is summarized as follow:

\begin{algorithm}[H]
	\caption{Deep Backward Dynamic Programming for Penalized BSDE
		(DBDP-PBSDE)}
	\label{alg:DBDP-PBSDE}
	\begin{algorithmic}[1]
		\State \textbf{Initialization:}
		\[
		\mathcal U_N^\lambda(x)=g(x),
		\qquad x\in\mathbb R^d.
		\]
		\For{\(i=N-1,\ldots,0\)}
		\State \textbf{Step 1: Choose the neural-network approximation.}
		\[
		\mathcal U(\cdot;\theta)
		\in
		\mathcal N_{d,1,\ell,m}^{\rho}.
		\]
		\State \textbf{Step 2: Define the loss function.}
		$$ L_i^\lambda(\theta) = \mathbb E\Big[ \big| \mathcal U_{i+1}^\lambda(X_{i+1}^\pi) - \mathcal U(X_{i}^\pi;\theta) + f^\lambda \big( t_i, X_{i}^\pi, \mathcal U(X_{i}^\pi;\theta) \big) \Delta t_i \big|^2 \Big].$$
		
		\State \textbf{Step 3: Train the neural network.}
		Determine
		\begin{equation*}
		\theta_{i,\lambda}^*
		\in
		\underset{\theta\in\Theta_m}{\operatorname{arg\,min}}\;
		L_i^\lambda(\theta), 
		\end{equation*}
		\State or approximately minimize \(L_i^\lambda\) using stochastic
		gradient descent.
		
		\State \textbf{Step 4: Update the value-function approximation.}
		\[
		\mathcal U_i^\lambda(\cdot)
		=
		\mathcal U(\cdot;\theta_{i,\lambda}^*).
		\]
		
		\EndFor
		
		\State \textbf{Return:}
		\[
		\left( \mathcal U_i^\lambda \right)_{0\leq i\leq N}.
		\]
		
	\end{algorithmic}
\end{algorithm}

\subsubsection{Reconstruction of the control process}
\label{subsubsec:Z-reconstruction}

Although the control process is not required for the construction of
the value approximation, it can be reconstructed after the
corresponding value function has been learned.

For \(i=0,\ldots,N-1\), define
\begin{equation}
\label{Zlearned}
\mathcal Z_i^\lambda:=\frac{1}{\Delta t_i}\mathbb E_{t_i}\left[\mathcal U_{i+1}^\lambda(X_{i+1}^\pi)\Delta W_i\right],
\end{equation}
where this choice is motivated by representation \eqref{Z_tilde}.

By the Markovian structure of the Euler scheme, there exists a
deterministic measurable function \( z_i^\lambda:\mathbb R^d\longrightarrow\mathbb R^d \)
such that
\[
\mathcal Z_i^\lambda = z_i^\lambda(X_{i}^\pi),
\]
where
\begin{equation}
\label{zfunction}
z_i^\lambda(x) = \frac{1}{\Delta t_i} \mathbb E \left[ \mathcal U_{i+1}^\lambda(X_{i+1}^\pi) \Delta W_i \,\middle|\, X_{i}^\pi=x \right].
\end{equation}

The quantity \(\mathcal Z_i^\lambda\) should therefore be viewed as a
reconstructed discrete approximation of the control process rather
than as an additional variable involved in the training of
\(\mathcal U_i^\lambda\). In particular, the approximation
\(\mathcal Z_i^\lambda\) does not enter the loss function
\eqref{lossY}.

\section{$L^2$-regularity results for the solution of PBSDE}
\label{sec:L2 regularity}
For the error analysis of the reconstructed control, it is useful to
introduce the \(L^2\)-projection
\begin{equation}
\bar Z_{i}^\lambda := \frac{1}{\Delta t_i} \mathbb E_{t_i} \left[ \int_{t_i}^{t_{i+1}} Z_t^\lambda\mathrm{d}t \right].
\label{eq:Zbar}
\end{equation}
The quantity \(\bar Z_{i}^\lambda\) represents the conditional time
average of the exact control process over
\([t_i,t_{i+1}]\). It is this quantity that naturally appears in the
\(L^2\)-regularity estimate for \(Z^\lambda\).\\

We define
$$ \varepsilon^{Z^\lambda}(\pi) := \mathbb{E}\left[\sum_{i=0}^{N-1}\int_{t_i}^{t_{i+1}}|Z_t^\lambda - \bar{Z}_{i}^\lambda|^2\mathrm{d}t\right].$$
	
By Theorem 5.2.4 in \cite{zhang book}, under \ref{HL}(\ref{HL_1}--\ref{HL_6}), we have for fixed $\lambda>0$ we have
$$\varepsilon^{Z^\lambda}(\pi) \leq  C_\lambda |\pi| ,$$
where $C_\lambda$ is a constant which depends on $\lambda$. In this section we will trace this dependence in order to obtain an explicit error estimate for the reconstructed control process of $Z$.\\

For fixed $\lambda>0$ and $\varepsilon>0$, let $(Y^{\lambda,\varepsilon},Z^{\lambda,\varepsilon})$ be the solution of regularized PBSDE with generator 
$$ f^{\lambda,\varepsilon}(t,x,y) = f(t,x,y) + \lambda\delta_\varepsilon(y-h(t,x)) ,$$
where $ \delta_\varepsilon : \mathbb{R}\rightarrow \mathbb{R}^+$ such that 
\[
\delta_\varepsilon(x)
=
\begin{cases}
	\dfrac{(x-\varepsilon)^2}{4\varepsilon}
	& |x|\leq\varepsilon\\[2mm]
	x^-
	& |x|>\varepsilon.
\end{cases}
\]

We have the following Lemma.

\begin{lemma}
	\label{lemma:delta-epsilon}
For every \(x,y\in\mathbb R\) and $\varepsilon>0$,
	\begin{equation}
		(x-y)\bigl(x^--\delta_\varepsilon(y)\bigr)
		\leq
		2\varepsilon^2.
		\label{eq:delta-key}
	\end{equation}
\end{lemma}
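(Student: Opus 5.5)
The plan is a direct case analysis based on three elementary properties of $\delta_\varepsilon$. I would first check these properties and then split according to the positions of $y$ and $x$ relative to $[-\varepsilon,\varepsilon]$. The key point is that the factor $x-y$ is unbounded. So a naive splitting $x^- - \delta_\varepsilon(y) = (x^- - y^-) + (y^- - \delta_\varepsilon(y))$ does not work: the second piece is small but gets multiplied by an arbitrarily large $|x-y|$. Monotonicity has to be used to show that the product is nonpositive except when both $x$ and $y$ lie in a window of size $O(\varepsilon)$.

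\textbf{Step 1: properties of $\delta_\varepsilon$.}
\begin{itemize}
\item \emph{Continuity and convexity.} On $[-\varepsilon,\varepsilon]$, $\delta_\varepsilon(r)=(r-\varepsilon)^2/(4\varepsilon)$, so $\delta_\varepsilon(-\varepsilon)=\varepsilon=(-\varepsilon)^-$ and $\delta_\varepsilon(\varepsilon)=0=\varepsilon^-$. Its derivative $(r-\varepsilon)/(2\varepsilon)$ equals $-1$ at $-\varepsilon$ and $0$ at $\varepsilon$, matching the slopes of $r^-$ outside. Hence $\delta_\varepsilon$ is $C^1$, convex and nonincreasing on $\mathbb R$.
\item \emph{Lower bound by $r^-$.} The lines $r\mapsto -r$ and $r\mapsto 0$ are tangent to $\delta_\varepsilon$ at $\mp\varepsilon$. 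By convexity $\delta_\varepsilon$ lies above both, so $\delta_\varepsilon(r)\ge r^-$.
\item \emph{Size of the gap.} The difference $\delta_\varepsilon(r)-r^-$ vanishes for $|r|>\varepsilon$. On $[-\varepsilon,\varepsilon]$ it is at most its value at $r=0$, namely $\varepsilon/4$: it is increasing on $[-\varepsilon,0]$ and decreasing on $[0,\varepsilon]$.
\end{itemize}

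\textbf{Step 2: case analysis.}
\begin{itemize}
\item \emph{Case $|y|>\varepsilon$.} Here $\delta_\varepsilon(y)=y^-$, and the expression equals $(x-y)(x^- - y^-)\le 0$ because $r\mapsto r^-$ is nonincreasing.
\item \emph{Case $|y|\le\varepsilon$, $x\ge y$.} Monotonicity and Step 1 give $x^-\le y^-\le\delta_\varepsilon(y)$, so the product is $\le 0$.
\item \emph{Case $|y|\le\varepsilon$, $x<y$.} Write the expression as $(y-x)\bigl(\delta_\varepsilon(y)-x^-\bigr)$. Since $\delta_\varepsilon$ is nonincreasing, $\delta_\varepsilon(y)\le\delta_\varepsilon(x)$, so $\delta_\varepsilon(y)-x^-\le \delta_\varepsilon(x)-x^-$.
  \begin{itemize}
  \item If $x<-\varepsilon$, the right-hand side is $0$ and the product is $\le 0$.
  \item If $-\varepsilon\le x<y\le\varepsilon$, then $y-x\le 2\varepsilon$ and $\delta_\varepsilon(x)-x^-\le\varepsilon/4$. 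The product is therefore at most $\varepsilon^2/2\le 2\varepsilon^2$.
  \end{itemize}
\end{itemize}

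The only delicate step is the last case. There one must bound $\delta_\varepsilon(y)$ by $\delta_\varepsilon(x)$ rather than by $y^-$, so that the unbounded region $x<-\varepsilon$ is absorbed by the sign and only the bounded window $[-\varepsilon,\varepsilon]^2$ contributes. This argument actually yields the sharper constant $\varepsilon^2/2$, which is consistent with the stated bound \eqref{eq:delta-key}.
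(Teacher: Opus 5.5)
Your proof is correct and follows the same case analysis as the paper: the case $|y|>\varepsilon$ by monotonicity of $r\mapsto r^-$, the case $x\ge y$ via $x^-\le y^-\le\delta_\varepsilon(y)$, and a localization of the remaining case $x<y$ to a window of width $2\varepsilon$. The only difference is in that last case. The paper uses the cruder bound $\delta_\varepsilon(y)\le\varepsilon$ both to force $x>-\varepsilon$ (when the product is positive) and to bound the second factor by $\varepsilon$. Your comparison $\delta_\varepsilon(y)\le\delta_\varepsilon(x)$ together with the gap estimate $\delta_\varepsilon(r)-r^-\le\varepsilon/4$ needs a little more about $\delta_\varepsilon$ but sharpens the constant to $\varepsilon^2/2$.
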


\begin{proof}
	First observe that \( \delta_\varepsilon(y)\geq y^-, y\in\mathbb R.\)
	
	If \(|y|>\varepsilon\), then
	\(\delta_\varepsilon(y)=y^-\), and since \(r\mapsto r^-\) is
	nonincreasing,
	\[
	(x-y)(x^--\delta_\varepsilon(y))
	=
	(x-y)(x^--y^-)
	\leq0.
	\]
	
	Assume now that \(|y|\leq\varepsilon\). If
	\[
	(x-y)(x^--\delta_\varepsilon(y))\leq0,
	\]
	there is nothing to prove. Otherwise the product is positive. The two
	factors must then be negative. Indeed, they cannot both be positive:
	if \(x>y\), then
	$
	x^-\leq y^-\leq\delta_\varepsilon(y),
	$
	it implies \(x^--\delta_\varepsilon(y)\leq0\), which leads to a contradiction.
	
	Hence
	\[
	x<y,
	\qquad
	x^-<\delta_\varepsilon(y).
	\]
	Since \(\delta_\varepsilon(y)\leq\varepsilon\), we have
	\(-x<\varepsilon\). Together with \(y\leq\varepsilon\), this gives
	$
	0<y-x<2\varepsilon.
$
	Moreover,
	\[
	0<
	\delta_\varepsilon(y)-x^-
	\leq\varepsilon.
	\]
	Therefore,
	\[
	\begin{aligned}
		(x-y)(x^--\delta_\varepsilon(y))
		&=
		(y-x)(\delta_\varepsilon(y)-x^-)\\
		&\leq
		2\varepsilon^2.
	\end{aligned}
	\]
\end{proof}

We have the following convergence result.

\begin{theorem}
	\label{thm bounds}
	Assume that \ref{HL}(\ref{HL_1}-\ref{HL_6}) hold true. Then there exists a constant \(C>0\), independent of \(\lambda\geq1\) and
	\(\varepsilon>0\), such that
	\begin{equation}
		\label{eq:Z-convergence}
		\mathbb E\left[
		\sup_{0\leq t\leq T}
		|Y_t^\lambda-Y_t^{\lambda,\varepsilon}|^2
		+
		\int_0^T
		|Z_t^\lambda-Z_t^{\lambda,\varepsilon}|^2\,\mathrm{d}t
		\right]
		\leq
		C\lambda\varepsilon^2.
	\end{equation}
	Consequently, for every fixed \(\lambda\geq1\),
	\[
	\lim_{\varepsilon\to0}
	\mathbb E\left[
	\sup_{0\leq t\leq T}
	|Y_t^\lambda-Y_t^{\lambda,\varepsilon}|^2
	+
	\int_0^T
	|Z_t^\lambda-Z_t^{\lambda,\varepsilon}|^2\,\mathrm{d}t
	\right]
	=0.
	\]
\end{theorem}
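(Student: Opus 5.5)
Set $\delta Y_t:=Y_t^\lambda-Y_t^{\lambda,\varepsilon}$ and $\delta Z_t:=Z_t^\lambda-Z_t^{\lambda,\varepsilon}$. Both equations share the terminal value $g(X_T)$, and the generator does not depend on $z$. The difference therefore solves
\[
\delta Y_t=\int_t^T\Big[f(s,X_s,Y^\lambda_s)-f(s,X_s,Y^{\lambda,\varepsilon}_s)+\lambda\big((Y^\lambda_s-h_s)^--\delta_\varepsilon(Y^{\lambda,\varepsilon}_s-h_s)\big)\Big]\mathrm ds-\int_t^T\delta Z_s\,\mathrm dW_s,
\]
where $h_s:=h(s,X_s)$.

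I will apply It\^o's formula to $e^{\eta t}|\delta Y_t|^2$ with $\eta=2[f]+1$. The $f$-difference is handled by the Lipschitz bound: $2\delta Y_s(f(\cdot,Y^\lambda_s)-f(\cdot,Y^{\lambda,\varepsilon}_s))\le 2[f]|\delta Y_s|^2$, which is absorbed by $\eta$. This gives
\[
e^{\eta t}|\delta Y_t|^2+\int_t^Te^{\eta s}|\delta Z_s|^2\mathrm ds\le 2\lambda\int_t^Te^{\eta s}\,\delta Y_s\big((Y^\lambda_s-h_s)^--\delta_\varepsilon(Y^{\lambda,\varepsilon}_s-h_s)\big)\mathrm ds-2\int_t^Te^{\eta s}\delta Y_s\delta Z_s\,\mathrm dW_s .
\]

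The key step is the penalization term. Setting $x=Y^\lambda_s-h_s$ and $y=Y^{\lambda,\varepsilon}_s-h_s$ gives $x-y=\delta Y_s$. By Lemma~\ref{lemma:delta-epsilon} the integrand is then pointwise bounded by $2\varepsilon^2$, and the monotone structure is exactly what makes the dependence on $\lambda$ linear rather than exponential. Consequently the whole penalty contribution is bounded by $4\lambda\varepsilon^2e^{\eta T}T$, deterministically. Taking expectations after localization yields
\[
\sup_t\mathbb E|\delta Y_t|^2+\mathbb E\int_0^T|\delta Z_s|^2\mathrm ds\le C\lambda\varepsilon^2,
\]
with $C$ depending only on $T$ and $[f]$. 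The martingale term is a true martingale after localization, since both solutions lie in $\mathcal S^2_c\times\mathcal L^{2,d}$.

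To move the supremum inside the expectation, I will return to the pathwise inequality, take $\sup_t$ and then expectations, and bound the stochastic integral by Burkholder--Davis--Gundy:
\[
\mathbb E\sup_t\Big|\int_t^Te^{\eta s}\delta Y_s\delta Z_s\mathrm dW_s\Big|\le C\,\mathbb E\Big[\sup_t|\delta Y_t|\Big(\int_0^T|\delta Z_s|^2\mathrm ds\Big)^{1/2}\Big]\le\tfrac14\mathbb E\sup_t|\delta Y_t|^2+C\,\mathbb E\int_0^T|\delta Z_s|^2\mathrm ds .
\]
Absorbing the first term and using the previous bound gives \eqref{eq:Z-convergence}. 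All constants are independent of $\lambda$ and $\varepsilon$, because $\lambda$ enters only through the factor multiplying $2\varepsilon^2$.

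The main obstacle I anticipate is technical rather than structural: justifying the absorption, which requires $\mathbb E\sup_t|\delta Y_t|^2<\infty$. This holds because both solutions are in $\mathcal S^{2,1}_c$ for fixed $\lambda,\varepsilon$; the regularized generator is Lipschitz in $y$, with constant of order $\lambda$, so $Y^{\lambda,\varepsilon}$ exists and is unique. The limit statement as $\varepsilon\to0$ for fixed $\lambda$ is then immediate.
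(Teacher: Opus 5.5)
Your proof is correct and follows the paper's argument. Both apply It\^o's formula to the squared difference and use Lemma~\ref{lemma:delta-epsilon} to bound the penalty integrand by $2\varepsilon^2$ pointwise, so $\lambda$ enters only linearly; both then use Burkholder--Davis--Gundy and Young to bring the supremum inside the expectation. The only cosmetic difference is that you absorb the $f$-Lipschitz term with the weight $e^{\eta t}$, whereas the paper uses Gronwall's lemma.
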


\begin{proof}
	Set
	\[
	\Delta Y_t
	:=
	Y_t^\lambda-Y_t^{\lambda,\varepsilon},
	\qquad
	\Delta Z_t
	:=
	Z_t^\lambda-Z_t^{\lambda,\varepsilon}.
	\]
	Applying It\^o's formula to \(|\Delta Y_t|^2\) on \([t,T]\) and
	taking expectations give
	\begin{align*}
		\mathbb E\left[|\Delta Y_t|^2\right]
		+
		\mathbb E\left[\int_t^T|\Delta Z_s|^2\,\mathrm{d}s\right]
		&=
		2\mathbb E\left[\int_t^T
		\Delta Y_s
		\left[
		f(s,X_s,Y_s^\lambda)
		-
		f(s,X_s,Y_s^{\lambda,\varepsilon})
		\right]\mathrm{d}s\right]
		\\
		&\quad+
		2\lambda\mathbb E\Bigg[\int_t^T
		\Delta Y_s
		\left[
		(Y_s^\lambda-h(s,X_s))^- 
		- \delta_\varepsilon
		(Y_s^{\lambda,\varepsilon}-h(s,X_s))
		\right]\mathrm{d}s \Bigg].
	\end{align*}
	
	For the penalization term,  applying Lemma \ref{lemma:delta-epsilon} with
$
	x=Y_s^\lambda-h(s,X_s),
	\quad
	y=Y_s^{\lambda,\varepsilon}-h(s,X_s),
$
	we obtain
	\[
	\Delta Y_s
	\left[
	(Y_s^\lambda-h(s,X_s))^-
	-
	\delta_\varepsilon
	(Y_s^{\lambda,\varepsilon}-h(s,X_s))
	\right]
	\leq
	2\varepsilon^2.
	\]
	
	Combining this inequality with the Lipschitz continuity of $f$ in $y$, yields
	\begin{equation}
		\mathbb E\left[|\Delta Y_t|^2\right]
		+
		\mathbb E\left[\int_t^T|\Delta Z_s|^2\,\mathrm{d}s\right]
		\leq
		C\int_t^T
		\mathbb E\left[|\Delta Y_s|^2\right]\mathrm{d}s
		+
		C\lambda\varepsilon^2.
		\label{eq:regularized-basic}
	\end{equation}
	
	By Gronwall's lemma,
	\begin{equation}
		\sup_{0\leq t\leq T}
		\mathbb E\left[|\Delta Y_t|^2\right]
		\leq
		C\lambda\varepsilon^2.
		\label{eq:Y-reg-conv}
	\end{equation}
	Returning to \eqref{eq:regularized-basic} and taking \(t=0\), we also
	obtain
	\begin{equation}
		\mathbb E\left[\int_0^T|\Delta Z_s|^2\mathrm{d}s\right]
		\leq
		C\lambda\varepsilon^2.
		\label{eq:Z-reg-conv}
	\end{equation}
	
	It remains to estimate the supremum of \(\Delta Y\). Keeping the
	martingale term in It\^o's formula and using
	\eqref{eq:delta-key}, we obtain
	\[
	\sup_{0\leq t\leq T}|\Delta Y_t|^2
	\leq
	C\int_0^T|\Delta Y_s|^2\,\mathrm{d}s
	+
	C\lambda\varepsilon^2
	+
	2\sup_{0\leq t\leq T}
	\left|
	\int_t^T
	\Delta Y_s\Delta Z_s\,\mathrm{d}W_s
	\right|.
	\]
	By the Burkholder--Davis--Gundy inequality,
	\begin{align*}
		&\mathbb E\left[
		\sup_{0\leq t\leq T}
		\left|
		\int_t^T
		\Delta Y_s\Delta Z_s\,\mathrm{d}W_s
		\right|
		\right] \leq
		C\,
		\mathbb E\left[
		\sup_{0\leq s\leq T}|\Delta Y_s|
		\left(
		\int_0^T|\Delta Z_s|^2\,\mathrm{d}s
		\right)^{1/2}
		\right].
	\end{align*}
	Using Young's inequality,
	\[
	\begin{aligned}
		&\mathbb E\left[
		\sup_{0\leq t\leq T}
		\left|
		\int_t^T
		\Delta Y_s\Delta Z_s\,\mathrm{d}W_s
		\right|
		\right] \leq
		\frac14
		\mathbb E\left[
		\sup_{0\leq s\leq T}|\Delta Y_s|^2
		\right]
		+
		C
		\mathbb E\left[\int_0^T|\Delta Z_s|^2\mathrm{d}s\right].
	\end{aligned}
	\]
	Using \eqref{eq:Y-reg-conv} and \eqref{eq:Z-reg-conv} and absorbing
	the first term into the left-hand side, we obtain
	\[
	\mathbb E\left[
	\sup_{0\leq t\leq T}|\Delta Y_t|^2
	\right]
	\leq
	C\lambda\varepsilon^2,
	\]
this completes the proof.
\end{proof}

We consider the following assumption :
\begin{enumerate}\mylabel{H\textsubscript{D}}\label{HD}
	\item\label{HD_1} $\mu$ and $\sigma$ are continuously differentiable in $x$, with bounded derivatives uniformly on $t$.
	\item\label{HD_2} The derivatives of $\mu$ and $\sigma$ are Lipschitz in $x$,  uniformly in $t$.
	\item\label{HD_3} $f$ is continuously differentiable in $(x,y)$ with bounded derivatives uniformly on $t$, and $g$ and $h$ are continuously differentiable in $x$ with bounded derivatives uniformly on $t$.
\end{enumerate}

For later use, we recall the following results:
\begin{lemma}[Lemma 5.2.3 in \cite{zhang book}]
	\label{lemma:nabla}
	Let
$
	\bigl(Y^{\lambda,\varepsilon},
	Z^{\lambda,\varepsilon}\bigr),
$
	be the solution of the regularized penalized BSDE. Assume that
	\ref{HL}(\ref{HL_1}--\ref{HL_6}) and \ref{HD}(\ref{HD_1},\ref{HD_3}) hold. Then the associated value function
	\(u^{\lambda,\varepsilon}\) is continuously differentiable with respect
	to the space variable and the following hold
	\begin{equation*}
		\partial_x
		u^{\lambda,\varepsilon}(t,X_t)
		=
		\nabla Y_t^{\lambda,\varepsilon}
		(\nabla X_t)^{-1},
	\end{equation*}

	\begin{equation}
		Z_t^{\lambda,\varepsilon}
		=
		\nabla Y_t^{\lambda,\varepsilon}
		(\nabla X_t)^{-1}
		\sigma(t,X_t).
		\label{representation_Z}
	\end{equation}
Moreover 
	\begin{align*}
		\nabla X_t
		&=
		I_d
		+
		\int_0^t
		\partial_x\mu(s,X_s)\nabla X_s\,\mathrm ds
		+
		\sum_{j=1}^d
		\int_0^t
		\partial_x\sigma^j(s,X_s)
		\nabla X_s\,\mathrm dW_s^j,
	\end{align*}
	and
	\begin{align*}
		\nabla Y_t^{\lambda,\varepsilon}
		={}
		\partial_xg(X_T)\nabla X_T
		+	\nonumber &
		\int_t^T
		\Big[
		\partial_xf^{\lambda,\varepsilon}
		\bigl(s,X_s,Y_s^{\lambda,\varepsilon}\bigr)
		\nabla X_s
		+
		\partial_yf^{\lambda,\varepsilon}
		\bigl(s,X_s,Y_s^{\lambda,\varepsilon}\bigr)
		\nabla Y_s^{\lambda,\varepsilon}
		\Big]\,\mathrm ds
	\\ &
		-
		\sum_{k=1}^d
		\int_t^T
		\nabla Z_s^{\lambda,\varepsilon,k}\,
		\mathrm dW_s^k.
	\end{align*}
	Here \(\sigma^j\) denotes the \(j\)-th column of \(\sigma\), $\partial_x\mu$ takes values in $\mathbb{R}^{d\times d}$ with its $(i,j)$-$th$ component $\partial_{x_j}\mu^i$, and similarly for the other matrix valued derivatives.
\end{lemma}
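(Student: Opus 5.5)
The lemma is quoted from Lemma 5.2.3 in \cite{zhang book}, so the plan is to check that the regularized penalized BSDE fits that framework for each fixed \(\lambda\) and \(\varepsilon\), and then recall the classical differentiability argument.

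\textbf{Step 1: the regularized driver is admissible.} Fix \(\lambda\geq1\) and \(\varepsilon>0\). Check that \(f^{\lambda,\varepsilon}(t,x,y)=f(t,x,y)+\lambda\delta_\varepsilon(y-h(t,x))\) meets the hypotheses of that result.
\begin{itemize}
\item \(\delta_\varepsilon\) is \(C^1\). On \(|x|\leq\varepsilon\) its derivative is \((x-\varepsilon)/(2\varepsilon)\), which equals \(-1\) at \(x=-\varepsilon\) and \(0\) at \(x=\varepsilon\). These values match the derivatives of \(x^-\) on the two outer pieces, and \(\delta_\varepsilon\) is continuous at \(\pm\varepsilon\).
\item Hence \(|\delta_\varepsilon'|\leq1\).
\item Under \ref{HD}(\ref{HD_3}), \(f^{\lambda,\varepsilon}\) is therefore \(C^1\) in \((x,y)\), with bounded derivatives \(\partial_y f^{\lambda,\varepsilon}=\partial_y f+\lambda\delta_\varepsilon'\) and \(\partial_x f^{\lambda,\varepsilon}=\partial_x f-\lambda\delta_\varepsilon'\,\partial_x h\).
\item These bounds depend on \(\lambda\), which is harmless here since \(\lambda\) is fixed.
\end{itemize}

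\textbf{Step 2: differentiate the forward--backward system in the initial point.} Under \ref{HD}(\ref{HD_1}), the flow \(x\mapsto X^{t,x}\) is differentiable in \(L^2\), and \(\nabla X\) solves the stated linear SDE. Because \(\nabla X\) solves a linear SDE with bounded coefficients, it is invertible, and \((\nabla X)^{-1}\) has moments of all orders. Differentiating the BSDE and using the chain rule for the \(C^1\) driver with bounded derivatives gives a linear BSDE for \((\nabla Y^{\lambda,\varepsilon},\nabla Z^{\lambda,\varepsilon})\). That BSDE has a unique solution, which is the \(L^2\)-derivative; this follows from the standard stability of BSDEs applied to difference quotients. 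The Markov property then gives \(\partial_x u^{\lambda,\varepsilon}(t,X_t)=\nabla Y_t^{\lambda,\varepsilon}(\nabla X_t)^{-1}\), by the flow property for \(X^{0,x_0}\) observed from time \(t\).

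\textbf{Step 3: identify \(Z\).} The representation \(Z=\nabla Y(\nabla X)^{-1}\sigma\) comes from Malliavin calculus. Show \(D_sY_t=\nabla Y_t(\nabla X_s)^{-1}\sigma(s,X_s)\) for \(s\leq t\); this uses that \(D_sX_t\) and \(D_sY_t\) solve the same linear equations as the gradient processes. Then \(Z_t=D_tY_t\), which yields \eqref{representation_Z}.

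\textbf{Main obstacle.} The delicate step is the justification of differentiability, namely convergence of the difference quotients in the BSDE. This needs continuity of \(\partial_y f^{\lambda,\varepsilon}\), which is exactly why \(x^-\) must be smoothed by \(\delta_\varepsilon\). Since it is the content of the cited lemma, I would invoke it directly once Step 1 is verified.
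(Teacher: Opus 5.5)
Your proposal is correct and takes the same approach as the paper. The paper gives no argument of its own and simply invokes Lemma 5.2.3 of Zhang's book for fixed $\lambda$ and $\varepsilon$. Your Step 1 is exactly the hypothesis check the paper leaves implicit: $\delta_\varepsilon\in C^1$ with $-1\leq\delta_\varepsilon'\leq 0$, so under \ref{HD}(\ref{HD_3}) $f^{\lambda,\varepsilon}$ is $C^1$ in $(x,y)$ with bounded, $\lambda$-dependent derivatives. Your Steps 2--3 are the standard flow-differentiation and Malliavin argument behind the cited result.
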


\begin{lemma}[Theorem 2.4 \cite{peter imkeller}]
\label{lemma:nabla 2}
Assume that \ref{HL}(\ref{HL_1},\ref{HL_2}) and \ref{HD}(\ref{HD_1},\ref{HD_2}) hold true, and there exists a constant $c>0$ such that
\begin{equation} \label{deg:lemma:7} y^\top\sigma(t,x)\sigma^\top(t,x)y\geq  c|y|^2 \;\; \;\; \forall(t,x,y)\in[0,T]\times\mathbb{R}^d\times\mathbb{R}^d,
\end{equation}

then for any $t\in[0,T]$, $\nabla X_t$ is invertible, its inverse $(\nabla X_t)^{-1}$ satisfies a linear SDE, and for any $p\geq  2$ there exists a positive constant $C_p$ such that
\begin{equation}
\label{nabla X}
\mathbb{E}\left[\sup_{0\leq  t\leq  T} |\nabla X_t|^p\right]^{1/p} + \mathbb{E}\left[\sup_{0\leq  t\leq  T} |(\nabla X_t)^{-1}|^p\right]^{1/p} \leq  C_p
\end{equation}

and 
\begin{equation}
\label{nabla X 2}
\mathbb{E}\left[\sup_{s\leq  u\leq  t} |\nabla X_u - \nabla X_s|^p	 + \sup_{s\leq  u\leq  t} |(\nabla X_u)^{-1} - (\nabla X_s)^{-1}|^p\right] \leq  C_p|t-s|^{p/2} .
\end{equation}
\end{lemma}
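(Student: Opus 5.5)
The statement to prove is Lemma~\ref{lemma:nabla 2}: invertibility of $\nabla X_t$, the moment bound \eqref{nabla X}, and the increment estimate \eqref{nabla X 2}. Although it is quoted from \cite{peter imkeller}, I would prove it directly by standard linear SDE arguments.

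\textbf{Step 1: moments of $\nabla X$.} By \ref{HD}(\ref{HD_1}), the coefficients $\partial_x\mu(s,X_s)$ and $\partial_x\sigma^j(s,X_s)$ are bounded adapted processes. Hence $\nabla X$ is the unique solution of a linear matrix SDE with bounded coefficients. Applying Burkholder--Davis--Gundy to $\sup_{u\le t}|\nabla X_u|^p$ and then Gronwall gives
\[
\mathbb E\Big[\sup_{0\le t\le T}|\nabla X_t|^p\Big]\le C_p .
\]

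\textbf{Step 2: invertibility and moments of the inverse.} I would write down the candidate inverse $V_t$ as the solution of
\[
V_t = I_d - \int_0^t V_s\Big(\partial_x\mu(s,X_s)-\sum_{j=1}^d \big(\partial_x\sigma^j(s,X_s)\big)^2\Big)\mathrm ds - \sum_{j=1}^d\int_0^t V_s\,\partial_x\sigma^j(s,X_s)\,\mathrm dW_s^j .
\]
This equation is again linear with bounded coefficients, so Step 1 gives $\mathbb E[\sup_t|V_t|^p]\le C_p$. Next, apply It\^o's product rule to $V_t\nabla X_t$. The $\mathrm dW$ terms cancel, and the It\^o correction $-\sum_j V\,\partial_x\sigma^j\,\partial_x\sigma^j\,\nabla X$ cancels the added $+\sum_j V(\partial_x\sigma^j)^2\nabla X$ drift. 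Therefore $\mathrm d(V_t\nabla X_t)=0$, so $V_t\nabla X_t=I_d$ for all $t$ a.s. Because these are square matrices, $V_t=(\nabla X_t)^{-1}$. This proves \eqref{nabla X}. Only boundedness of the derivatives is used here; the ellipticity assumption \eqref{deg:lemma:7} and \ref{HD}(\ref{HD_2}) are not needed for this part. I would remark that they enter elsewhere, e.g.\ for Malliavin-type representations.

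\textbf{Step 3: increments.} For $s\le u\le t$,
\[
\nabla X_u-\nabla X_s=\int_s^u \partial_x\mu\,\nabla X_r\,\mathrm dr+\sum_j\int_s^u\partial_x\sigma^j\,\nabla X_r\,\mathrm dW_r^j .
\]
By BDG, then H\"older in time, boundedness of the coefficients, and Step 1,
\[
\mathbb E\Big[\sup_{s\le u\le t}|\nabla X_u-\nabla X_s|^p\Big]\le C_p\Big(|t-s|^{p}+|t-s|^{p/2}\Big)\sup_r\mathbb E|\nabla X_r|^p\le C_p|t-s|^{p/2},
\]
since $|t-s|\le T$. The identical argument applied to $V$, using its moment bound from Step 2, gives the estimate for the inverse and hence \eqref{nabla X 2}.

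\textbf{Main obstacle.} The only delicate point is getting the correct drift correction in the SDE for $V$, so that the product computation closes exactly to $V_t\nabla X_t=I_d$. Everything else is routine Gronwall/BDG estimation. A minor technical care is localization before taking expectations, which is handled by stopping times and Fatou's lemma.
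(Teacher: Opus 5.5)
Your argument is correct. It differs from the paper in that the paper gives no proof at all: it imports the three conclusions from Theorem~2.4 of \cite{peter imkeller}, where they are stated under that paper's standing hypotheses, including the uniform ellipticity \eqref{deg:lemma:7}.

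You instead give a self-contained linear-SDE proof:
\begin{itemize}
\item the explicit candidate inverse $V$, with the It\^o drift correction $+\sum_j V(\partial_x\sigma^j)^2$;
\item the product rule giving $\mathrm d(V_t\nabla X_t)=0$, which yields $V_t\nabla X_t=I_d$ for all $t$ simultaneously a.s., by continuity;
\item BDG, H\"older and Gronwall for both the moment bound \eqref{nabla X} and the increment bound \eqref{nabla X 2}.
\end{itemize}
I checked the cancellation in $\mathrm d(V\nabla X)$: the $\mathrm dW^j$ terms cancel, and the bracket term $-\sum_j V\,\partial_x\sigma^j\,\partial_x\sigma^j\,\nabla X\,\mathrm dt$ exactly offsets the added drift. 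The increment bound $C_p(|t-s|^p+|t-s|^{p/2})\leq C_p|t-s|^{p/2}$ also closes as you say.

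What your route buys is transparency about the hypotheses. Only \ref{HD}(\ref{HD_1}) is actually used, namely bounded derivatives of $\mu$ and $\sigma$. Neither \ref{HD}(\ref{HD_2}) nor the non-degeneracy condition is needed for these conclusions. The constants depend only on $p$, $d$, $T$ and the bounds on $\partial_x\mu,\partial_x\sigma$, not on $x_0$.

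The citation buys brevity and keeps the lemma inside the framework of \cite{peter imkeller}. However, it hides the fact that \eqref{deg:lemma:7} plays no role here. As far as this lemma is concerned, that assumption could be dropped where the lemma is invoked in Theorem~\ref{thm:Z-regularity}.
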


Let $i\in\{1,\cdots,d\}$, we have
\begin{align}
(\nabla Y^{\lambda,\varepsilon}_t)^i &= \partial_xg(X_T)(\nabla X_T)^i + \int_t^T\left(\partial_x f^{\lambda,\varepsilon}(s,X_s,Y^{\lambda,\varepsilon}_s)(\nabla X_s)^i +\partial_y f^{\lambda,\varepsilon}(s,X_s,Y^{\lambda,\varepsilon}_s)(\nabla Y_s^{\lambda,\varepsilon})^i \right)\mathrm{d}s \label{lin}\\
		&\hspace{4cm}- \int_t^T\left(\left(\nabla Z^{\lambda,\varepsilon}_s\right)^\top\right)^i\mathrm{d}W_s, \nonumber
\end{align}

where $(\nabla Y^{\lambda,\varepsilon}_t)^i$ is the $i$-$th$ component of the (row) vector $\nabla Y^{\lambda,\varepsilon}_t$, $(\nabla X_t)^i$ is the $i$-$th$ column of the matrix $\nabla X_t$, and $\left(\left(\nabla Z^{\lambda,\varepsilon}_s\right)^\top\right)^i$ is the $i$-$th$ row of the transposed matrix of $\nabla Z^{\lambda, \varepsilon} = \begin{pmatrix} \nabla Z^{\lambda, \varepsilon,1} \\ \vdots \\ \nabla Z^{\lambda, \varepsilon,d} \end{pmatrix} $.

We prove now the following proposition:
\begin{proposition}
	\label{prop:p}
	Assume that the hypotheses of Lemmas~\ref{lemma:nabla} and
	\ref{lemma:nabla 2} hold. Then, for every \(p\geq2\), there exists a
	constant \(C_p>0\), independent of \(\lambda\geq1\) and
	\(\varepsilon>0\), such that
	\begin{equation}
		\mathbb E\left[
		\sup_{0\leq t\leq T}
		|\nabla Y_t^{\lambda,\varepsilon}|^p
		\right]
		\leq C_p \qquad \text{and}\qquad
		\mathbb E\left[
		\left(
		\int_0^T
		|\nabla Z_t^{\lambda,\varepsilon}|^2\,\mathrm{d}t
		\right)^{p/2}
		\right]
		\leq
		C_p\lambda^{p/2}.
		\label{eq:nablaY_nablaZ_bounds}
	\end{equation}
\end{proposition}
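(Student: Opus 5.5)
The plan is to treat the linear BSDE \eqref{lin} for each column $(\nabla Y^{\lambda,\varepsilon})^i$ by the same exponential-weight device used in the proof of Proposition~\ref{thm:lip}. This exploits the sign of the penalization derivative. Write
\[
\partial_y f^{\lambda,\varepsilon}(s,x,y)=\partial_y f(s,x,y)+\lambda\delta_\varepsilon'(y-h(s,x)),
\]
and note that $\delta_\varepsilon$ is nonincreasing, with $-1\leq\delta_\varepsilon'\leq0$. Hence $a_s:=\lambda\delta_\varepsilon'(Y_s^{\lambda,\varepsilon}-h(s,X_s))\in[-\lambda,0]$, while $b_s:=\partial_y f(s,X_s,Y^{\lambda,\varepsilon}_s)$ is bounded by $[f]$. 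The $x$-derivative is
\[
\partial_x f^{\lambda,\varepsilon}=\partial_x f-a_s\,\partial_x h(s,X_s),
\]
so the source term in \eqref{lin} is $\partial_x f\,\nabla X_s-a_s\,\partial_xh\,\nabla X_s$. Its first part is bounded by $C|\nabla X_s|$; the second part is of size $\lambda$ but carries the factor $-a_s\geq0$.

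\textbf{Step 1 (bound on $\nabla Y$).} Set $\Gamma_s^t:=\exp(\int_t^s(a_r+b_r)\,\mathrm dr)$. Since \eqref{lin} has no $z$-dependence in its driver, the linear BSDE representation gives
\[
(\nabla Y_t)^i=\mathbb E_t\Big[\Gamma_T^t\,\partial_xg(X_T)(\nabla X_T)^i+\int_t^T\Gamma^t_s\big(\partial_xf\,(\nabla X_s)^i-a_s\,\partial_xh\,(\nabla X_s)^i\big)\mathrm ds\Big].
\]
Bound $\Gamma_s^t\leq e^{[f]T}e^{\int_t^sa_r\mathrm dr}$. As in Proposition~\ref{thm:lip},
\[
\int_t^T(-a_s)e^{\int_t^sa_r\mathrm dr}\,\mathrm ds\leq1,
\]
so that $|\nabla Y_t|\leq C\,\mathbb E_t[\sup_{s\leq T}|\nabla X_s|]$ with $C$ independent of $\lambda,\varepsilon$, using the boundedness of $\partial_xg,\partial_xf,\partial_xh$ from \ref{HD}(\ref{HD_3}). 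Doob's maximal inequality applied to the martingale $\mathbb E_t[\sup|\nabla X|]$, together with \eqref{nabla X}, then yields $\mathbb E[\sup_t|\nabla Y_t|^p]\leq C_p$.

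\textbf{Step 2 (bound on $\nabla Z$).} Apply It\^o's formula to $|\nabla Y_t|^2$ on $[0,T]$:
\[
\int_0^T|\nabla Z_s|^2\mathrm ds\leq|\partial_xg\,\nabla X_T|^2+2\int_0^T|\nabla Y_s|\big(C|\nabla X_s|+[f]|\nabla Y_s|+\lambda C|\nabla X_s|\big)\mathrm ds+2\sup_t\Big|\int_t^T\nabla Y_s\,\nabla Z_s\,\mathrm dW_s\Big|.
\]
Here the term $2a_s|\nabla Y_s|^2\leq0$ is simply dropped, and the cross term coming from the penalization source is bounded crudely by $2\lambda C|\nabla Y_s||\nabla X_s|$. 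Raise this inequality to the power $p/2$ and take expectations. The stochastic integral is handled by BDG and Young:
\[
\mathbb E\Big[\Big(\int|\nabla Y|^2|\nabla Z|^2\Big)^{p/4}\Big]\leq\tfrac12\mathbb E\Big[\Big(\int|\nabla Z|^2\Big)^{p/2}\Big]+C\,\mathbb E\big[\sup|\nabla Y|^p\big].
\]
Then Step 1 and \eqref{nabla X} give
\[
\mathbb E\Big[\Big(\int_0^T|\nabla Z|^2\Big)^{p/2}\Big]\leq C_p(1+\lambda^{p/2})\leq C_p\lambda^{p/2}
\]
for $\lambda\geq1$. To justify absorbing the $\tfrac12$ term, localize (for fixed $\lambda,\varepsilon$ the quantity is finite).

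\textbf{Main obstacle.} The delicate point is Step 1: a naive Gronwall argument on \eqref{lin} produces $e^{\lambda T}$. The monotonicity $\delta_\varepsilon'\leq0$ (which should be checked directly from the definition of $\delta_\varepsilon$, including on $|x|\leq\varepsilon$) is what makes the weight $e^{\int a}$ both bounded and integrable against $-a$. In Step 2, the only genuinely $\lambda$-dependent term is $\lambda\int|\nabla Y||\nabla X|$, whose $p/2$-moment is $O(\lambda^{p/2})$. This explains the stated rate.
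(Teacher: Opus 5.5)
Your proposal is correct and follows the paper's proof essentially step for step: the same exponential weight $\exp\bigl(\int_t^s(\partial_y f+\lambda\delta_\varepsilon')\,\mathrm dr\bigr)$ together with $\int_t^T(-a_s)e^{\int_t^s a_r\,\mathrm dr}\,\mathrm ds\le 1$ and Doob in Step 1, and It\^o's formula plus BDG/Young with localization in Step 2. The only deviation is minor. The paper completes the square, $2q_sU_s(\eta_s-U_s)\le \tfrac12 q_s|\eta_s|^2$, whereas you bound the penalization cross term crudely by $2\lambda C|\nabla Y_s||\nabla X_s|$ and invoke Step 1; both give the same $O(\lambda^{p/2})$ bound. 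In your Young step, choose the weight small enough to absorb the $C_p$ prefactor that appears after raising to the power $p/2$, rather than fixing it at exactly $\tfrac12$.
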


\begin{proof}
	For \(i=1,\ldots,d\), set
	\[
	U_t^i
	=
	(\nabla Y_t^{\lambda,\varepsilon})^i,
	\qquad
	V_t^i
	=
	\left(
	(\nabla Z_t^{\lambda,\varepsilon})^\top
	\right)^i,
	\qquad
	\xi^i
	=
	\partial_xg(X_T)(\nabla X_T)^i.
	\]

\[a_s^{\lambda,\varepsilon} = \partial_yf^{\lambda,\varepsilon}(s,X_s,Y_s^{\lambda,\varepsilon}),\qquad b_s^i = \partial_x f^{\lambda,\varepsilon}(s,X_s,Y^{\lambda,\varepsilon}_s)(\nabla X_s)^i.\]

	Define
	\[
	q_s^{\lambda,\varepsilon}
	:=
	-\lambda
	\delta_\varepsilon'
	\bigl(
	Y_s^{\lambda,\varepsilon}-h(s,X_s)
	\bigr).
	\]
	Since \(-1\leq\delta_\varepsilon'\leq0\) we have
	\begin{equation*}
		0\leq q_s^{\lambda,\varepsilon}\leq\lambda.
	\end{equation*}
	
	Moreover, set
	\[
	\alpha_s
	:=
	\partial_yf(s,X_s,Y_s^{\lambda,\varepsilon}),\qquad 
	\beta_s^i
	:=
	\partial_xf(s,X_s,Y_s^{\lambda,\varepsilon})
	(\nabla X_s)^i,\qquad 
	\eta_s^i
	:=
	\partial_xh(s,X_s)(\nabla X_s)^i.
	\]
	Then
	\[
	a_s^{\lambda,\varepsilon}
	=
	\alpha_s-q_s^{\lambda,\varepsilon},
	\qquad
	b_s^i
	=
	\beta_s^i
	+
	q_s^{\lambda,\varepsilon}\eta_s^i,
	\]
	and the linear BSDE \eqref{lin} can be written as
	\begin{equation}
		U_t^i
		=
		\xi^i
		+
		\int_t^T
		\left[
		\beta_s^i
		+
		\alpha_sU_s^i
		+
		q_s^{\lambda,\varepsilon}
		(\eta_s^i-U_s^i)
		\right]\mathrm{d}s
		-
		\int_t^T
		V_s^i\,\mathrm{d}W_s.
		\label{eq:component-lin-improved}
	\end{equation}
	
	\medskip
	\noindent
	\textit{Step 1. Uniform estimate for \(\nabla Y^{\lambda,\varepsilon}\).} Let
$\Gamma_{t,s}^{\lambda,\varepsilon}
	:=
	\exp\left(
	\int_t^s
	\bigl(
	\alpha_r-q_r^{\lambda,\varepsilon}
	\bigr)\,\mathrm{d}r
	\right).
$
Since $\partial f_y$ is uniformly bounded by some constant $L_f$ we have
	\[
	0<
	\Gamma_{t,s}^{\lambda,\varepsilon}
	\leq
	e^{L_fT}
	\exp\left(
	-\int_t^s q_r^{\lambda,\varepsilon}\,\mathrm{d}r
	\right) \leq e^{L_fT}.
	\]
	Consequently,
	\begin{align}
		\int_t^T
		\Gamma_{t,s}^{\lambda,\varepsilon}
		q_s^{\lambda,\varepsilon}\,\mathrm{d}s
		&\leq
		e^{L_fT}
		\int_t^T
		q_s^{\lambda,\varepsilon}
		\exp\left(
		-\int_t^s
		q_r^{\lambda,\varepsilon}\,\mathrm{d}r
		\right)\mathrm{d}s
		\nonumber\\
		&=
		e^{L_fT}
		\left[
		1-
		\exp\left(
		-\int_t^Tq_r^{\lambda,\varepsilon}\,\mathrm{d}r
		\right)
		\right]
		\nonumber\\
		&\leq
		e^{L_fT}.
		\label{eq:Gamma-q}
	\end{align}
	
	One the other hand, since the BSDE \eqref{eq:component-lin-improved} is linear, it gives
	\[
	U_t^i
	=
	\mathbb E_t\left[
	\Gamma_{t,T}^{\lambda,\varepsilon}\xi^i
	+
	\int_t^T
	\Gamma_{t,s}^{\lambda,\varepsilon}
	\left(
	\beta_s^i+
	q_s^{\lambda,\varepsilon}\eta_s^i
	\right)\mathrm{d}s
	\right].
	\]
	By boundedness of \(\partial_xg\), \(\partial_xf\) and \(\partial_xh\), we have
$
	|\xi^i|
	+
	|\beta_s^i|
	+
	|\eta_s^i|
	\leq
	C\sup_{0\leq r\leq T}|\nabla X_r|.
	$
	Using \eqref{eq:Gamma-q}, we obtain
	\[
	|U_t^i|
	\leq
	C\,
	\mathbb E_t\left[
	\sup_{0\leq r\leq T}
	|\nabla X_r|
	\right],
	\]
	where \(C\) is independent of \(\lambda\) and \(\varepsilon\).
	
	Hence
	\[
	|\nabla Y_t^{\lambda,\varepsilon}|
	\leq
	C\,
	\mathbb E_t\left[
	\sup_{0\leq r\leq T}
	|\nabla X_r|
	\right].
	\]
	By Doob's maximal inequality and the estimate \eqref{nabla X} for
	\(\nabla X\), we have for $p\geq 2$
	\[
	\mathbb E\left[
	\sup_{0\leq t\leq T}
	|\nabla Y_t^{\lambda,\varepsilon}|^p
	\right]
	\leq
	C_p.
	\]
	This proves the first estimate.
	
	\medskip
	\noindent
\medskip
\noindent
\textit{Step 2. Estimate for \(\nabla Z^{\lambda,\varepsilon}\).} Applying It\^o's formula to \(|U_t^i|^2\) in
\eqref{eq:component-lin-improved} gives
\begin{align*}
	|U_t^i|^2
	+
	\int_t^T|V_s^i|^2\,\mathrm{d}s
	&=
	|\xi^i|^2
	+
	2\int_t^T U_s^i\beta_s^i\,\mathrm{d}s
	+
	2\int_t^T\alpha_s|U_s^i|^2\,\mathrm{d}s
	\\
	&\quad+
	2\int_t^T
	q_s^{\lambda,\varepsilon}
	U_s^i(\eta_s^i-U_s^i)\,\mathrm{d}s
	-
	2\int_t^T
	U_s^iV_s^i\,\mathrm{d}W_s.
\end{align*}

Since \(q_s^{\lambda,\varepsilon}\geq0\), we have 
\[ 2q_s^{\lambda,\varepsilon}
U_s^i(\eta_s^i-U_s^i)
=
-2q_s^{\lambda,\varepsilon}\left(U_s^i-\frac{\eta_s^i}{2}\right)^2
+q_s^{\lambda,\varepsilon}\frac{|\eta_s^i|^2}{2}
\leq
\frac12
q_s^{\lambda,\varepsilon}
|\eta_s^i|^2.
\]
Moreover,
\[
2U_s^i\beta_s^i
\leq
|U_s^i|^2+|\beta_s^i|^2,
\qquad
|\alpha_s|\leq C.
\]
Therefore, taking \(t=0\)
\begin{align*}
	\int_0^T|V_s^i|^2\,\mathrm{d}s
	\leq C\Bigg(
	&|\xi^i|^2
	+
	\sup_{0\leq s\leq T}|U_s^i|^2
	+
	\int_0^T|\beta_s^i|^2\,\mathrm{d}s
	+
	\int_0^T
	q_s^{\lambda,\varepsilon}|\eta_s^i|^2\,\mathrm{d}s
	+
	\left|
	\int_0^T U_s^iV_s^i\,\mathrm{d}W_s
	\right|
	\Bigg).
\end{align*}

Let $p\geq 2$, from the boundedness of
\(\partial_xg\), \(\partial_xf\), and \(\partial_xh\), the moment
estimates for \(\nabla X\), and \eqref{eq:nablaY_nablaZ_bounds}, we have
\[
\mathbb E\left[|\xi^i|^p\right]\leq C_p,
\qquad
\mathbb E\left[\sup_{0\leq s\leq T}|U_s^i|^p\right]\leq C_p,\qquad \text{and}\qquad
\mathbb E\left[
\left(
\int_0^T|\beta_s^i|^2\,\mathrm{d}s
\right)^{p/2}
\right]
\leq C_p.
\]
Consequently, it follows that
\begin{align*}
	\mathbb E\left[
	\left(
	\int_0^T|V_s^i|^2\,\mathrm{d}s
	\right)^{p/2}
	\right]
	\leq C_p\Bigg(
	&1
	+
	\mathbb E\left[
	\left(
	\int_0^T
	q_s^{\lambda,\varepsilon}|\eta_s^i|^2\,\mathrm{d}s
	\right)^{p/2}
	\right] +
	\mathbb E\left[
	\left|
	\int_0^T
	U_s^iV_s^i\,\mathrm{d}W_s
	\right|^{p/2}
	\right]
	\Bigg).
\end{align*}

By using \(0\leq q_s^{\lambda,\varepsilon}\leq\lambda\) and \(
|\eta_s^i|
\leq
C|\nabla X_s|,
\), together with \eqref{nabla X}, we obtain

\begin{equation}
	\mathbb E\left[
	\left(
	\int_0^T
	q_s^{\lambda,\varepsilon}|\eta_s^i|^2\,\mathrm{d}s
	\right)^{p/2}
	\right]
	\leq
	C_p\lambda^{p/2}.
	\label{eq:q-eta}
\end{equation}

To estimate the martingale term, one may first use a standard localization procedure, and then apply the Burkholder--Davis--Gundy inequality to have
\begin{align*}
	\mathbb E\left[
	\left|
	\int_0^T
	U_s^iV_s^i\,\mathrm{d}W_s
	\right|^{p/2}
	\right]
	&\leq
	C_p
	\mathbb E\left[
	\left(
	\int_0^T
	|U_s^i|^2|V_s^i|^2\,\mathrm{d}s
	\right)^{p/4}
	\right]\\
	&\leq
	C_p
	\mathbb E\left[
	\sup_{0\leq s\leq T}|U_s^i|^{p/2}
	\left(
	\int_0^T|V_s^i|^2\,\mathrm{d}s
	\right)^{p/4}
	\right].
\end{align*}
Thus, by Young's inequality, for every \(\eta>0\),
\begin{align*}
	\mathbb E\left[
	\left|
	\int_0^T
	U_s^iV_s^i\,\mathrm{d}W_s
	\right|^{p/2}
	\right]
	&\leq
	\eta
	\mathbb E\left[
	\left(
	\int_0^T|V_s^i|^2\,\mathrm{d}s
	\right)^{p/2}
	\right]+
	C_{p,\eta}
	\mathbb E\left[
	\sup_{0\leq s\leq T}|U_s^i|^p
	\right].
\end{align*}

Choosing \(\eta>0\) sufficiently small, and using
the uniform estimate of $U_s^i$, \eqref{eq:q-eta} and $\lambda\geq 1$, we obtain
\begin{align*}
\mathbb E\left[
\left( \int_0^T|V_s^i|^2\mathrm{d}s \right)^{p/2} \right] &\leq
C_p\left(1+\lambda^{p/2}\right)\\
	&\leq C_p\lambda^{p/2}.
\end{align*}

Finally,
\[
|\nabla Z_s^{\lambda,\varepsilon}|^2
=
\sum_{i=1}^d|V_s^i|^2.
\]
Since \(p/2\geq1\),
\[
\left(\sum_{i=1}^d a_i\right)^{p/2}
\leq
d^{p/2-1}\sum_{i=1}^d a_i^{p/2},
\qquad a_i\geq0.
\]
Therefore,
\[
\mathbb E\left[
\left(
\int_0^T
|\nabla Z_s^{\lambda,\varepsilon}|^2\,\mathrm{d}s
\right)^{p/2}
\right]
\leq
C_{p,d}\lambda^{p/2},
\]
which proves the second estimate.
\end{proof}

\begin{theorem}
	\label{thm:Z-regularity}
	Assume \ref{HL}(\ref{HL_1}--\ref{HL_6}), \ref{HD} and the non-degeneracy assumption (\ref{deg:lemma:7}). Then there exists a constant \(C>0\), independent of \(\lambda\geq1\) and $|\pi|$ such that
	\begin{equation}
		\varepsilon^{Z^\lambda}(\pi)
		\leq
		C\left(
		\lambda|\pi|
		+
		\lambda^2|\pi|^2
		\right),
		\label{eq:Z-regularity}
	\end{equation}
	
	In particular, if $\lambda|\pi|\leq C_0$ for some constant \(C_0>0\) independent of \(\lambda\) and \(|\pi|\), then
	\begin{equation}
		\varepsilon^{Z^\lambda}(\pi)
		\leq
		C\lambda|\pi|,
		\label{eq:Z-regularity-improved}
	\end{equation}
	where \(C>0\) may depend on \(C_0\), but is independent of
	\(\lambda\) and \(|\pi|\).
\end{theorem}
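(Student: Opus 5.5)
The plan is to prove the estimate first for the regularized solution $(Y^{\lambda,\varepsilon},Z^{\lambda,\varepsilon})$ with a constant independent of $\varepsilon$, and then let $\varepsilon\to0$. Since $\bar Z_i^\lambda$ is the $L^2$-projection of $Z^\lambda$ onto $\mathcal F_{t_i}$-measurable constants on $[t_i,t_{i+1}]$, we have
\[
\varepsilon^{Z^\lambda}(\pi)\le \mathbb E\Big[\sum_i\int_{t_i}^{t_{i+1}}|Z^\lambda_t-\bar Z^{\lambda,\varepsilon}_i|^2\mathrm dt\Big]\le 2\,\mathbb E\Big[\int_0^T|Z^\lambda_t-Z^{\lambda,\varepsilon}_t|^2\mathrm dt\Big]+2\,\varepsilon^{Z^{\lambda,\varepsilon}}(\pi).
\]
By Theorem~\ref{thm bounds}, the first term is at most $C\lambda\varepsilon^2$, which vanishes as $\varepsilon\to0$ for fixed $\lambda$. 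It therefore suffices to show
\[
\varepsilon^{Z^{\lambda,\varepsilon}}(\pi)\le C(\lambda|\pi|+\lambda^2|\pi|^2)
\]
uniformly in $\varepsilon$. Again by the projection property, we may replace $\bar Z^{\lambda,\varepsilon}_i$ by the $\mathcal F_{t_i}$-measurable quantity $Z^{\lambda,\varepsilon}_{t_i}$. This reduces the task to bounding $\sum_i\int_{t_i}^{t_{i+1}}\mathbb E|Z^{\lambda,\varepsilon}_t-Z^{\lambda,\varepsilon}_{t_i}|^2\mathrm dt$.

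Next, use the representation \eqref{representation_Z}: $Z_t^{\lambda,\varepsilon}=\nabla Y_t^{\lambda,\varepsilon}(\nabla X_t)^{-1}\sigma(t,X_t)$. Writing $M_t:=(\nabla X_t)^{-1}\sigma(t,X_t)$, split
\[
Z_t-Z_{t_i}=(\nabla Y_t-\nabla Y_{t_i})M_t+\nabla Y_{t_i}(M_t-M_{t_i}).
\]
For the second piece, use Hölder's inequality, the uniform moment bound of Proposition~\ref{prop:p} on $\sup|\nabla Y^{\lambda,\varepsilon}|^4$, and the increment estimate \eqref{nabla X 2}. Together with the $1/2$-Hölder regularity in $t$ and the Lipschitz regularity in $x$ of $\sigma$ and the increments of $X$, this gives $\mathbb E|M_t-M_{t_i}|^4\le C|\pi|^2$. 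Hence this piece contributes $C|\pi|$, with no $\lambda$ dependence.

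The first piece is the main obstacle, and it is where $\lambda$ enters. By Hölder's inequality with $M$ having all moments, it suffices to bound $\mathbb E\big[\sup_{t_i\le t\le t_{i+1}}|\nabla Y_t-\nabla Y_{t_i}|^4\big]^{1/2}$. From the linear BSDE \eqref{eq:component-lin-improved},
\[
\nabla Y_t-\nabla Y_{t_i}=-\int_{t_i}^t\big[\beta_s+\alpha_s U_s+q_s(\eta_s-U_s)\big]\mathrm ds+\int_{t_i}^t V_s\,\mathrm dW_s.
\]
The drift is bounded by $C(1+\lambda)\sup_s(|\nabla X_s|+|U_s|)\,|\pi|$, because $0\le q\le\lambda$. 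By Proposition~\ref{prop:p}, it therefore contributes $C\lambda^2|\pi|^2$ to the squared $L^2$ norm on each interval, and summing $\int_{t_i}^{t_{i+1}}$ over $i$ gives $C\lambda^2|\pi|^2$. For the martingale part, I would not bound each interval separately. Instead, use Itô's isometry to write
\[
\sum_i\int_{t_i}^{t_{i+1}}\mathbb E\Big|\int_{t_i}^tV_s\,\mathrm dW_s\,M_t\Big|^2\mathrm dt .
\]
Handle the factor $M_t$ either by freezing it at $t_i$ (the error is controlled as in the previous step) or by Hölder with conditional BDG. This leads to a bound of the form $|\pi|\,\mathbb E\big[\sup_t|M_t|^2\int_0^T|V_s|^2\mathrm ds\big]$. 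By Hölder and Proposition~\ref{prop:p} with $p=4$, that is at most $C|\pi|\big(\mathbb E[(\int_0^T|\nabla Z^{\lambda,\varepsilon}|^2)^2]\big)^{1/2}\le C\lambda|\pi|$. The delicate point is to keep the summation over intervals global, so that $\int_0^T|\nabla Z|^2$ appears only once and yields $\lambda$ rather than $N\lambda$. One bounds $\sum_i|\pi|\int_{t_i}^{t_{i+1}}|V|^2\le|\pi|\int_0^T|V|^2$ inside a single expectation after pulling out $\sup|M|^2$.

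Collecting the three contributions gives $\varepsilon^{Z^{\lambda,\varepsilon}}(\pi)\le C(|\pi|+\lambda|\pi|+\lambda^2|\pi|^2)$, and $|\pi|\le\lambda|\pi|$ since $\lambda\ge1$. Letting $\varepsilon\to0$ yields \eqref{eq:Z-regularity}. Finally, if $\lambda|\pi|\le C_0$, then $\lambda^2|\pi|^2\le C_0\lambda|\pi|$, which gives \eqref{eq:Z-regularity-improved}.
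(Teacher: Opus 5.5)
Your proposal is correct and follows essentially the paper's route: regularize, use $Z^{\lambda,\varepsilon}=\nabla Y^{\lambda,\varepsilon}(\nabla X)^{-1}\sigma$, and split into three parts. These are a $\lambda$-free forward-increment term of order $|\pi|$, a drift term of order $\lambda^2|\pi|^2$ coming from $0\le q\le\lambda$, and a martingale term. The martingale term is handled globally via the conditional It\^o isometry and Proposition~\ref{prop:p} with $p=4$, which gives $\lambda|\pi|$ rather than $N\lambda|\pi|$.

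The differences are cosmetic. You do the $\varepsilon\to0$ passage and the replacement of $\bar Z_i$ by $Z_{t_i}$ up front, via the projection property and a triangle inequality, which is slightly shorter than the paper's Step~5. You also freeze the other factor in the product decomposition, which only adds a harmless $O(|\pi|)$ cross term.
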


\begin{proof}
	Throughout the proof, \(C>0\) denotes a generic constant which may
	change from line to line, but is independent of
	\(\lambda\geq1\), \(\varepsilon>0\), and the partition \(\pi\).
	
	We first establish the estimate for the regularized penalized BSDE.
	For \(\varepsilon>0\), recall that
	\[
	Y_t^{\lambda,\varepsilon}
	=
	g(X_T)
	+
	\int_t^T
	f^{\lambda,\varepsilon}
	(s,X_s,Y_s^{\lambda,\varepsilon})\,\mathrm{d}s
	-
	\int_t^T
	Z_s^{\lambda,\varepsilon}\,\mathrm{d}W_s,
	\]
	where \(
	f^{\lambda,\varepsilon}(t,x,y)
	=
	f(t,x,y)
	+
	\lambda\delta_\varepsilon(y-h(t,x)).
	\)
	
	Set
	\[
	U_t:=\nabla Y_t^{\lambda,\varepsilon},
	\qquad
	B_t:=(\nabla X_t)^{-1},
	\qquad
	D_t:=B_t\sigma(t,X_t).
	\]
	By the representation formula \eqref{representation_Z},
	\begin{equation*}
		Z_t^{\lambda,\varepsilon}=U_tD_t.
	\end{equation*}
	
	\medskip
	\noindent
	\textit{Step 1. Estimates for \(D\).} Let \(p\geq2\). By the assumptions \ref{HL}(\ref{HL_1},\ref{HL_2}) on \(\sigma\),
	\eqref{Xmoment}, \eqref{nabla X}, and Hölder's inequality,
	\begin{align}
		\mathbb E\left[
		\sup_{0\leq t\leq T}|D_t|^p
		\right]
		&\leq
		\left(\mathbb E
		\sup_{0\leq t\leq T}|B_t|^{2p}
		\right)^{1/2}
		\left(\mathbb E
		\sup_{0\leq t\leq T}
		|\sigma(t,X_t)|^{2p}
		\right)^{1/2}
		\nonumber\\
		&\leq
		C_p(1+|x_0|^p).
		\label{eq:D-moment}
	\end{align}
	
	Moreover, for \(0\leq s\leq t\leq T\),
	\[
	D_t-D_s
	=
	(B_t-B_s)\sigma(t,X_t)
	+
	B_s
	\bigl(
	\sigma(t,X_t)-\sigma(s,X_s)
	\bigr).
	\]
	Hence, by Hölder's inequality, \eqref{nabla X 2},
	assumption \ref{HL}(\ref{HL_1},\ref{HL_2}), and the standard estimate
	\eqref{eq:standard_X_estimate},
	\begin{equation}
		\mathbb E\left[|D_t-D_s|^p\right]
		\leq
		C_p(1+|x_0|^p)|t-s|^{p/2},
		\qquad p\geq2.
		\label{estimate:D}
	\end{equation}
	
	\medskip
	\noindent
	\textit{Step 2. Decomposition of the increment of
		\(Z^{\lambda,\varepsilon}\).} Fix \(i\in\{0,\ldots,N-1\}\) and
	\(t\in[t_i,t_{i+1}]\). We have
	\[
	Z_t^{\lambda,\varepsilon}
	-
	Z_{t_i}^{\lambda,\varepsilon}
	=
	(U_t-U_{t_i})D_{t_i}
	+
	U_t(D_t-D_{t_i}).
	\]
	Consequently,
	\begin{equation}
		|Z_t^{\lambda,\varepsilon}
		-Z_{t_i}^{\lambda,\varepsilon}|^2
		\leq
		2|(U_t-U_{t_i})D_{t_i}|^2
		+
		2|U_t(D_t-D_{t_i})|^2.
		\label{eq:Z-decomposition-square}
	\end{equation}
	
	We first estimate the second term. By Proposition~\ref{prop:p} we have
	\begin{equation*}
		\mathbb E\left[
		\sup_{0\leq t\leq T}|U_t|^4
		\right]
		\leq C.
	\end{equation*}
	Therefore, by using Hölder's inequality and \eqref{estimate:D}
	\[
	\begin{aligned}
		\mathbb E\left[|U_t(D_t-D_{t_i})|^2\right]
		&\leq
		\left(\mathbb E|U_t|^4\right)^{1/2}		
		\left(\mathbb E|D_t-D_{t_i}|^4
		\right)^{1/2}\\
		&\leq
		C(1+|x_0|^2)(t-t_i).
	\end{aligned}
	\]
	Since \(x_0\) is fixed, its contribution is absorbed into \(C\).
	Hence
	\begin{align}
		\sum_{i=0}^{N-1}
		\mathbb E\left[
		\int_{t_i}^{t_{i+1}}
		|U_t(D_t-D_{t_i})|^2\,\mathrm{d}t\right] &\leq C
		\sum_{i=0}^{N-1}
		\int_{t_i}^{t_{i+1}}
		(t-t_i)\mathrm{d}t
		\nonumber\\
		&\leq C|\pi|.
		\label{eq:forward-term-improved}
	\end{align}
	
	\medskip
	\noindent
	\textit{Step 3. Estimate of \((U_t-U_{t_i})D_{t_i}\).} Using the notation in the proof of Proposition \ref{prop:p}, and set
	\[
	F_s^j
	:=
	b_s^j
	+
	a_s^{\lambda,\varepsilon}U_s^j,
	\qquad
	F_s=(F_s^1,\ldots,F_s^d).
	\]
	For \(t\in[t_i,t_{i+1}]\), we have
	\[U_t^i = \xi^i +
		\int_t^T
		\left[
		b_s^i
		+
		a_s^{\lambda,\varepsilon}U_s^i
		\right]\mathrm{d}s
		-
		\int_t^T
		V_s^i\,\mathrm{d}W_s, \]
		which gives
	\[
	U_t-U_{t_i}
	=
	-\int_{t_i}^tF_s\,\mathrm{d}s
	+
	\int_{t_i}^t
	\nabla Z_s^{\lambda,\varepsilon}\,\mathrm{d}W_s. \]
	Therefore,
	\begin{equation}
		|(U_t-U_{t_i})D_{t_i}|^2
		\leq
		2|D_{t_i}|^2
		\left|
		\int_{t_i}^tF_s\,\mathrm{d}s
		\right|^2 +
		2\left|
		\left(
		\int_{t_i}^t
		\nabla Z_s^{\lambda,\varepsilon}\,\mathrm{d}W_s
		\right)
		D_{t_i}
		\right|^2.
		\label{eq:A-increment-decomposition}
	\end{equation}
	
	We estimate the two terms separately.
	
For the first term, by the boundedness of \(\partial_yf\) and
	\(|\delta_\varepsilon'|\leq1\),
	\[
	|a_s^{\lambda,\varepsilon}|
	\leq
	C+\lambda
	\leq
	C\lambda,
	\qquad
	\lambda\geq1.
	\]
	Moreover, from boundedness of $\partial_xf$ and $\partial_xh$ we have
	\[
	|b_s^j|
	\leq
	C\lambda|(\nabla X_s)^j|.
	\]
	It follows that
	\[
	|F_s|
	\leq
	C\lambda
	\left(
	|\nabla X_s|+|U_s|
	\right).
	\]
	Consequently, using \eqref{nabla X} and the uniform estimate of $U_s$
	\begin{equation}
		\sup_{0\leq s\leq T}
		\mathbb E|F_s|^4
		\leq
		C\lambda^4.
		\label{eq:F-fourth}
	\end{equation}
	
	By Hölder's inequality,
	\[
	\left|
	\int_{t_i}^tF_s\,\mathrm{d}s
	\right|^2
	\leq
	(t-t_i)
	\int_{t_i}^t|F_s|^2\,\mathrm{d}s.
	\]
	Therefore, by Hölder's inequality in the expectation, \eqref{eq:D-moment} and \eqref{eq:F-fourth},
\begin{align*}
\mathbb E\left[ |D_{t_i}|^2 \left| \int_{t_i}^tF_s\,\mathrm{d}s \right|^2 \right] &\leq (t-t_i) \int_{t_i}^t \mathbb E \left[ |D_{t_i}|^2|F_s|^2 \right]\mathrm{d}s\\
	&\leq (t-t_i) \int_{t_i}^t \bigl( \mathbb E|D_{t_i}|^4 \bigr)^{1/2} \bigl( \mathbb E|F_s|^4 \bigr)^{1/2}\mathrm{d}s\\
	&\leq C\lambda^2(t-t_i)^2.
\end{align*}

Integrating over \(t\) and summing over \(i\), we obtain
	\begin{align}
		\sum_{i=0}^{N-1}
		\int_{t_i}^{t_{i+1}}
		\mathbb E\left[
		|D_{t_i}|^2
		\left|
		\int_{t_i}^tF_s\,\mathrm{d}s
		\right|^2
		\right]\mathrm{d}t &\leq
		C\lambda^2
		\sum_{i=0}^{N-1}
		(\Delta t_i)^3
		\nonumber\\
		&\leq
		C\lambda^2|\pi|^2.
		\label{eq:drift-improved}
	\end{align}
	
	For the second term, set
	\[
	M_t^i
	:=
	\int_{t_i}^t
	\nabla Z_s^{\lambda,\varepsilon}\,\mathrm{d}W_s.
	\]
	By submultiplicativity of the matrix norm,
	\[
	|M_t^iD_{t_i}|^2
	\leq
	C|M_t^i|^2|D_{t_i}|^2.
	\]
	Since \(D_{t_i}\) is
	\(\mathcal F_{t_i}\)-measurable, the conditional Itô isometry
	gives
	\[
	\begin{aligned}
		\mathbb E\left[|M_t^iD_{t_i}|^2\right]
		&\leq
		C\mathbb E\left[
		|D_{t_i}|^2
		\mathbb E_{t_i}|M_t^i|^2
		\right]\\
		&\leq
		C\mathbb E\left[
		|D_{t_i}|^2
		\int_{t_i}^t
		|\nabla Z_s^{\lambda,\varepsilon}|^2
		\,\mathrm{d}s
		\right].
	\end{aligned}
	\]
	Integrating with respect to \(t\) and summing over \(i\),

	\begin{equation*}
		\sum_{i=0}^{N-1}
		\int_{t_i}^{t_{i+1}}
		\mathbb E\left[|M_t^iD_{t_i}|^2\right]\mathrm{d}t \leq
		C|\pi|
		\mathbb E\left[
		\sup_{0\leq t\leq T}|D_t|^2
		\int_0^T
		|\nabla Z_s^{\lambda,\varepsilon}|^2
		\mathrm{d}s
		\right].
	\end{equation*}
	By Hölder's inequality,
	\[
	\begin{aligned}
		&\mathbb E\left[
		\sup_{0\leq t\leq T}|D_t|^2
		\int_0^T
		|\nabla Z_s^{\lambda,\varepsilon}|^2
		\,\mathrm{d}s
		\right]\\
		&\qquad\leq
		\left(
		\mathbb E
		\sup_{0\leq t\leq T}|D_t|^4
		\right)^{1/2}
		\left[
		\mathbb E
		\left(
		\int_0^T
		|\nabla Z_s^{\lambda,\varepsilon}|^2
		\,\mathrm{d}s
		\right)^2
		\right]^{1/2}.
	\end{aligned}
	\]
	By using \eqref{eq:D-moment} and the estimate of
	Proposition~\ref{prop:p} we have
	\[
	\mathbb E\left[
	\sup_{0\leq t\leq T}|D_t|^2
	\int_0^T
	|\nabla Z_s^{\lambda,\varepsilon}|^2
	\,\mathrm{d}s
	\right]
	\leq
	C\lambda.
	\]
	Thus
	\begin{equation}
		\sum_{i=0}^{N-1}
		\int_{t_i}^{t_{i+1}}
		\mathbb E\left[|M_t^iD_{t_i}|^2\right]\mathrm{d}t
		\leq
		C\lambda|\pi|.
		\label{eq:martingale-improved}
	\end{equation}
	
	Combining \eqref{eq:A-increment-decomposition},
	\eqref{eq:drift-improved}, and
	\eqref{eq:martingale-improved}, we obtain
	\begin{equation}
		\sum_{i=0}^{N-1}
		\mathbb E\left[
		\int_{t_i}^{t_{i+1}}
		|(U_t-U_{t_i})D_{t_i}|^2\,\mathrm{d}t\right]
		\leq
		C\left(
		\lambda|\pi|
		+
		\lambda^2|\pi|^2
		\right).
		\label{eq:A-part-improved}
	\end{equation}
	
	Combining \eqref{eq:Z-decomposition-square},
	\eqref{eq:forward-term-improved}, and
	\eqref{eq:A-part-improved}, we get
	\begin{align}
		\sum_{i=0}^{N-1}
		\mathbb E\left[
		\int_{t_i}^{t_{i+1}}
		|Z_t^{\lambda,\varepsilon}
		-Z_{t_i}^{\lambda,\varepsilon}|^2\,\mathrm{d}t\right]
		&\leq
		C\left(
		|\pi|
		+
		\lambda|\pi|
		+
		\lambda^2|\pi|^2
		\right).
		\label{eq:Z-grid-regularity-improved}
	\end{align}
	Since \(\lambda\geq1\), we obtain
	\begin{equation}
		\sum_{i=0}^{N-1}
		\mathbb E
		\int_{t_i}^{t_{i+1}}
		|Z_t^{\lambda,\varepsilon}
		-Z_{t_i}^{\lambda,\varepsilon}|^2\,\mathrm{d}t
		\leq
		C\left(
		\lambda|\pi|
		+
		\lambda^2|\pi|^2
		\right).
		\label{eq:Z-grid-final-improved}
	\end{equation}
	
	\medskip
	\noindent
	\textit{Step 4.} Define
	\[
	\bar Z_i^{\lambda,\varepsilon}
	:=
	\frac{1}{\Delta t_i}
	\mathbb E_{t_i}\left[
	\int_{t_i}^{t_{i+1}}
	Z_t^{\lambda,\varepsilon}\,\mathrm{d}t
	\right].
	\]
	For every square-integrable
	\(\mathcal F_{t_i}\)-measurable random variable \(\zeta\),
	the orthogonality property of conditional expectation gives
	\begin{equation*}
		\mathbb E
		\left[\int_{t_i}^{t_{i+1}}
		|Z_t^{\lambda,\varepsilon}-\zeta|^2
		\,\mathrm{d}t\right] =
		\mathbb E\left[
		\int_{t_i}^{t_{i+1}}
		|Z_t^{\lambda,\varepsilon}
		-\bar Z_i^{\lambda,\varepsilon}|^2
		\,\mathrm{d}t\right]
		+
		\Delta t_i
		\mathbb E\left[ |\bar Z_i^{\lambda,\varepsilon}-\zeta|^2\right].
	\end{equation*}
	Taking $ \zeta=Z_{t_i}^{\lambda,\varepsilon}$ and using \eqref{eq:Z-grid-final-improved}, we obtain
	\begin{equation}
		\mathbb E\left[
		\sum_{i=0}^{N-1}
		\int_{t_i}^{t_{i+1}}
		|Z_t^{\lambda,\varepsilon}
		-\bar Z_i^{\lambda,\varepsilon}|^2
		\,\mathrm{d}t
		\right]
		\leq
		C\left(
		\lambda|\pi|
		+
		\lambda^2|\pi|^2
		\right).
		\label{eq:regularized-final-improved}
	\end{equation}
	
	\medskip
	\noindent
	\textit{Step 5. Passage to the limit
		\(\varepsilon\to0\).} By definition, we have
	\[
	\bar Z_i^{\lambda,\varepsilon}
	-\bar Z_i^\lambda
	=
	\frac{1}{\Delta t_i}
	\mathbb E_{t_i}\left[
	\int_{t_i}^{t_{i+1}}
	\left(
	Z_t^{\lambda,\varepsilon}-Z_t^\lambda
	\right)\mathrm{d}t
	\right].
	\]
	Conditional Jensen's inequality and Hölder give
	\[
	\Delta t_i\,
	\mathbb E\left[
	|\bar Z_i^{\lambda,\varepsilon}
	-\bar Z_i^\lambda|^2\right] \leq
	\mathbb E\left[
	\int_{t_i}^{t_{i+1}}
	|Z_t^{\lambda,\varepsilon}
	-Z_t^\lambda|^2\,\mathrm{d}t\right].
	\]
	Summing over \(i\) and using
	Theorem~\ref{thm bounds}, we obtain
	\[
	\sum_{i=0}^{N-1}
	\Delta t_i\,
	\mathbb E\left[
	|\bar Z_i^{\lambda,\varepsilon}
	-\bar Z_i^\lambda|^2\right]
	\underset{\varepsilon\to0}{\longrightarrow}
	0.
	\]
	
	Define the processes
	\[
	\bar Z_t^{\lambda,\varepsilon,\pi}
	:=
	\sum_{i=0}^{N-1}
	\bar Z_i^{\lambda,\varepsilon}
	\mathbf 1_{[t_i,t_{i+1})}(t), \qquad \text{and}\qquad
	\bar Z_t^{\lambda,\pi}
	:=
	\sum_{i=0}^{N-1}
	\bar Z_i^\lambda
	\mathbf 1_{[t_i,t_{i+1})}(t).
	\]
	Then
	\[
	\bar Z^{\lambda,\varepsilon,\pi}
	\longrightarrow
	\bar Z^{\lambda,\pi}
	\qquad
	\text{in }
	L^2(\Omega\times[0,T]).
	\]
	Moreover, Theorem~\ref{thm bounds} gives
	\[
	Z^{\lambda,\varepsilon}
	\longrightarrow
	Z^\lambda
	\qquad
	\text{in }
	L^2(\Omega\times[0,T]).
	\]
	Therefore,
	\[
	Z^{\lambda,\varepsilon}
	-\bar Z^{\lambda,\varepsilon,\pi}
	\longrightarrow
	Z^\lambda-\bar Z^{\lambda,\pi}
	\qquad
	\text{in }
	L^2(\Omega\times[0,T]).
	\]
	By continuity of the \(L^2\)-norm,
	\begin{equation*}
		\mathbb E\left[
		\int_0^T
		|Z_t^\lambda-\bar Z_t^{\lambda,\pi}|^2\,\mathrm{d}t\right]
		=
		\lim_{\varepsilon\to0}
		\mathbb E\left[
		\int_0^T
		|Z_t^{\lambda,\varepsilon}
		-\bar Z_t^{\lambda,\varepsilon,\pi}|^2
		\,\mathrm{d}t\right].
	\end{equation*}
	Since the constant in
	\eqref{eq:regularized-final-improved}
	is independent of \(\varepsilon\), and
	\[
	\mathbb E\left[
		\sum_{i=0}^{N-1}
		\int_{t_i}^{t_{i+1}}
		|Z_t^{\lambda,\varepsilon}
		-\bar Z_i^{\lambda,\varepsilon}|^2
		\,\mathrm{d}t\right]
		=
		\mathbb E\left[
		\int_0^T
		|Z_t^{\lambda,\varepsilon}
		-\bar Z_t^{\lambda,\varepsilon,\pi}|^2
		\,\mathrm{d}t\right],
	\]

	we conclude that
	\[
	\mathbb E\left[
	\int_0^T
	|Z_t^\lambda-\bar Z_t^{\lambda,\pi}|^2
	\,\mathrm{d}t\right]
	\leq
	C\left(
	\lambda|\pi|
	+
	\lambda^2|\pi|^2
	\right).
	\]
	Equivalently,
	\[
	\varepsilon^{Z^\lambda}(\pi)
	\leq
	C\left(
	\lambda|\pi|
	+
	\lambda^2|\pi|^2
	\right),
	\]
	which proves \eqref{eq:Z-regularity}.
	
	Finally, if $
	\lambda|\pi|\leq C_0,
	$
	then 
	\[
	\varepsilon^{Z^\lambda}(\pi)
	\leq
	C(1+C_0)\lambda|\pi| \leq
	C\lambda|\pi|,
	\]
	after absorbing \(1+C_0\) into \(C\). This proves \eqref{eq:Z-regularity-improved}.
\end{proof}

\section{Convergence analysis}
\label{sec:convergence}
The objective of this section is to establish some explicit error estimates for the \(L^2\)-error between the solution \(Y\) of the RBSDE \eqref{RBSDE}, and the learned solution produced from the DBDP-PBSDE scheme, together with explicit error estimates of the \(L^2\)-error for the reconstructed control process.

Recall that, under our standing assumption that the driver is independent of \(z\), the backward learning procedure approximates only the value process \(Y^\lambda\). More precisely, the DBDP-PBSDE scheme produces the family of learned functions
\[
\left(
\mathcal U_i^\lambda
\right)_{0\leq i\leq N-1},
\]
and the corresponding approximation of the value process $Y$ is
\[
\mathcal U_i^\lambda(X_{i}^\pi),
\qquad i=0,\ldots,N-1.
\]

The convergence analysis is therefore divided into three steps. First,
for a fixed penalization parameter \(\lambda>0\), we estimate the
discretization and neural-network approximation errors for the
\(Y\)-component of the penalized BSDE \eqref{PBSDE}. Second, we derive an error estimate for the reconstructed control process $\mathcal{Z}^\lambda$ which is- considered as approximation of $Z^\lambda$. Finally, we combine these estimates with the convergence rate of the penalization method in order to obtain global error estimates with respect to the solution \((Y,Z)\) of the reflected BSDE.

\subsection{Error estimates for the penalized BSDE}
We first analyze the approximation of the value component of the penalized BSDE. For a fixed \(\lambda>0\), define
\begin{equation*}
\mathcal E_Y^\lambda(\pi) := \max_{0\leq i\leq N-1} \mathbb E\left[ \left| Y_{t_i}^\lambda - \mathcal U_i^\lambda(X_{i}^\pi) \right|^2 \right].
\end{equation*}

We begin with a preliminary result concerning the implicit map associated with the penalized generator.
	
\begin{lemma}
\label{lemma:inv}
For \(i\in\{0,\ldots,N-1\}\), define
\[\mathcal A_i(x,y):=y-f^\lambda(t_i,x,y)\Delta t_i,\qquad(x,y)\in\mathbb R^d\times\mathbb R.\]

Assume \ref{HL}(\ref{HL_3},\ref{HL_5}) and $[f]\Delta t_i<1. $ Then, for every \(x\in\mathbb R^d\), the mapping $ y\longmapsto\mathcal A_i(x,y) $ is a bijection from \(\mathbb R\) onto \(\mathbb R\). Moreover, for every $\hat{y}_1,\hat{y}_2\in\mathbb{R}$,
\begin{equation}
\left|\mathcal A_i^{-1}(x,\hat y_1)-\mathcal A_i^{-1}(x,\hat y_2)\right|\leq\frac{1}{1-[f]\Delta t_i}|\hat y_1-\hat y_2|.
\label{eq:inverse-Lipschitz}
\end{equation}
In particular, if $[f]\Delta t_i\leq\frac12,$ then
\begin{equation}
\left|\mathcal A_i^{-1}(x,\hat y_1)-\mathcal A_i^{-1}(x,\hat y_2)\right|\leq(1+2[f]\Delta t_i)|\hat y_1-\hat y_2|.
\label{eq:inverse-Lipschitz-simple}
\end{equation}
\end{lemma}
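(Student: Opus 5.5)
The plan is to show that, for fixed \(x\), the map \(y\mapsto\mathcal A_i(x,y)\) is continuous and strongly monotone with modulus \(1-[f]\Delta t_i>0\). Write
\[
\mathcal A_i(x,y)=y-f(t_i,x,y)\Delta t_i-\lambda\Delta t_i\,\bigl(y-h(t_i,x)\bigr)^-.
\]
For \(y_1>y_2\), we will bound each part of the difference \(\mathcal A_i(x,y_1)-\mathcal A_i(x,y_2)\) from below.
\begin{itemize}
\item By \ref{HL}(\ref{HL_3}), \(f(t_i,x,y_1)-f(t_i,x,y_2)\leq [f](y_1-y_2)\).
\item The map \(r\mapsto r^-\) is nonincreasing, so \(\bigl(y_1-h(t_i,x)\bigr)^--\bigl(y_2-h(t_i,x)\bigr)^-\leq0\). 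The penalization term therefore contributes a nonnegative amount \(-\lambda\Delta t_i[\cdots]\geq0\).
\end{itemize}
Hence
\[
\mathcal A_i(x,y_1)-\mathcal A_i(x,y_2)\geq (1-[f]\Delta t_i)(y_1-y_2).
\]
Assumption \ref{HL}(\ref{HL_5}) is used only to guarantee that \(h(t_i,x)\) is a finite real number, so that \(\mathcal A_i(x,\cdot)\) is a well-defined continuous function. Continuity holds because \(f\) is Lipschitz in \(y\) and \(r\mapsto r^-\) is \(1\)-Lipschitz.

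Bijectivity then follows in the usual way.
\begin{itemize}
\item Injectivity: strict monotonicity is immediate from the lower bound, since \(1-[f]\Delta t_i>0\).
\item Surjectivity: the lower bound gives \(\mathcal A_i(x,y)\to\pm\infty\) as \(y\to\pm\infty\). Indeed, \(\mathcal A_i(x,y)-\mathcal A_i(x,0)\geq(1-[f]\Delta t_i)y\) for \(y>0\), with the symmetric bound for \(y<0\). The intermediate value theorem then yields surjectivity.
\end{itemize}

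For the Lipschitz estimate \eqref{eq:inverse-Lipschitz}, set \(y_k=\mathcal A_i^{-1}(x,\hat y_k)\). The monotonicity bound, applied with the roles ordered appropriately, gives
\[
|\hat y_1-\hat y_2|\geq(1-[f]\Delta t_i)|y_1-y_2|,
\]
which is exactly the claim. The key point is that the estimate is independent of \(\lambda\): the penalty term has a favorable sign and is simply dropped.

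Finally, \eqref{eq:inverse-Lipschitz-simple} follows from the elementary inequality \(\frac{1}{1-a}\leq 1+2a\) for \(a\in[0,\tfrac12]\). This holds because \((1-a)(1+2a)=1+a-2a^2=1+a(1-2a)\geq1\).

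I do not expect a real obstacle. The only point needing care is the sign argument for the nonsmooth term \((\cdot)^-\). It should be handled through monotonicity of the increments rather than through a derivative, so that no Lipschitz constant of order \(\lambda\) enters the bound.
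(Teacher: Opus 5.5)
Your proposal is correct and follows essentially the same route as the paper: strong monotonicity of \(\mathcal A_i(x,\cdot)\) with modulus \(1-[f]\Delta t_i\), obtained by discarding the penalty increment via the monotonicity of \(r\mapsto r^-\), then continuity and coercivity for surjectivity, then the elementary bound \(\frac{1}{1-a}\leq 1+2a\) on \([0,\tfrac12]\). The only difference is cosmetic: the paper writes the monotonicity in product form, \([\mathcal A_i(x,y_1)-\mathcal A_i(x,y_2)](y_1-y_2)\geq(1-[f]\Delta t_i)|y_1-y_2|^2\), rather than with ordered arguments \(y_1>y_2\).
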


\begin{proof}
Fix \(x\in\mathbb R^d\) and \(y_1,y_2\in\mathbb R\). Since \( y\longmapsto (y-h(t_i,x))^- \) is non-increasing,
\[ \left[ (y_1-h(t_i,x))^- - (y_2-h(t_i,x))^- \right](y_1-y_2) \leq0. \]

Combining this inequality with the Lipschitz continuity of \(f\) with respect to \(y\) gives
\begin{equation*}
\left[ f^\lambda(t_i,x,y_1) - f^\lambda(t_i,x,y_2) \right](y_1-y_2) \leq [f]|y_1-y_2|^2.
\end{equation*}
It follows that
\begin{align}
\left[ \mathcal A_i(x,y_1) - \mathcal A_i(x,y_2) \right](y_1-y_2) & = |y_1-y_2|^2 - \Delta t_i \left[ f^\lambda(t_i,x,y_1) - f^\lambda(t_i,x,y_2) \right](y_1-y_2) \nonumber\\
	& \geq (1-[f]\Delta t_i)|y_1-y_2|^2.
\label{eq:A-strong-monotonicity}
\end{align}
Thus, since \([f]\Delta t_i<1\), \(\mathcal A_i(x,\cdot)\) is strictly increasing.

Taking \(y_2=0\) in \eqref{eq:A-strong-monotonicity} yields, for every $y \in\mathbb{R}$
\[ \left[ \mathcal A_i(x,y)-\mathcal A_i(x,0) \right]y \geq (1-[f]\Delta t_i)|y|^2. \]
Hence $ \mathcal A_i(x,y)\longrightarrow+\infty$ as $y\longrightarrow+\infty$, and $\mathcal A_i(x,y)\longrightarrow-\infty$ as $y\longrightarrow-\infty.$ Since \(\mathcal A_i(x,\cdot)\) is continuous, it is therefore a bijection from \(\mathbb R\) onto \(\mathbb R\).

Furthermore, \eqref{eq:A-strong-monotonicity} implies
\begin{equation}
|\mathcal A_i(x,y_1)-\mathcal A_i(x,y_2)| \geq (1-[f]\Delta t_i)|y_1-y_2|. \label{inq:lower-estimate-A}
\end{equation}
Setting \( y_j=\mathcal A_i^{-1}(x,\hat y_j)\), for $\hat y_j\in\mathbb{R}$ and \(j=1,2\), gives \eqref{eq:inverse-Lipschitz}. Finally, if \([f]\Delta t_i\leq1/2\), then
\[ \frac{1}{1-[f]\Delta t_i} \leq 1+2[f]\Delta t_i, \]
which proves \eqref{eq:inverse-Lipschitz-simple}.
\end{proof}

We now introduce the auxiliary implicit process \((V_i^\lambda)_{0\leq i\leq N-1}\), which will serve as an intermediate target in the error analysis. This process is introduced only for theoretical purposes and is not an additional output of the DBDP-PBSDE algorithm.
	
\begin{lemma}
\label{lemma-v}
Assume \ref{HL}(\ref{HL_1},\ref{HL_6}) and $[f]\Delta t_i<1.$ Then, for every \(\lambda>0\) and \(i\in\{0,\ldots,N-1\}\), there exists a unique $\mathcal{F}_{t_i}$-measurable random variable $V_i^\lambda \in L^2(\Omega)$ satisfying
\begin{equation}
V_i^\lambda = \mathbb E_{t_i}\left[ \mathcal U_{i+1}^\lambda (X_{i+1}^\pi) \right] + f^\lambda \left( t_i,X_{i}^\pi,V_i^\lambda \right)\Delta t_i. \label{fixed point}
\end{equation}

Moreover, there exists a deterministic measurable function $ v_i^\lambda:\mathbb R^d\longrightarrow\mathbb R $ such that

\begin{equation}
V_i^\lambda = v_i^\lambda(X_{i}^\pi).
\label{v function}
\end{equation}
\end{lemma}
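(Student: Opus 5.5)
The plan is to reduce the implicit relation \eqref{fixed point} to the inversion of the map $\mathcal A_i$ of Lemma~\ref{lemma:inv}, pointwise in $x$. We set
\[
\widehat V_i := \mathbb E_{t_i}\left[\mathcal U_{i+1}^\lambda(X_{i+1}^\pi)\right].
\]
Equation \eqref{fixed point} is then exactly $\mathcal A_i(X_i^\pi,V_i^\lambda)=\widehat V_i$. Since $[f]\Delta t_i<1$, Lemma~\ref{lemma:inv} shows that $y\mapsto\mathcal A_i(x,y)$ is a bijection of $\mathbb R$ for every $x$. This forces the definition $V_i^\lambda:=\mathcal A_i^{-1}(X_i^\pi,\widehat V_i)$ and gives uniqueness immediately: any two $\mathcal F_{t_i}$-measurable solutions coincide $\omega$ by $\omega$.

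\textbf{Markov structure.} First we represent $\widehat V_i$ as a function of $X_i^\pi$. Because $X_{i+1}^\pi=X_i^\pi+\mu(t_i,X_i^\pi)\Delta t_i+\sigma(t_i,X_i^\pi)\Delta W_i$, with $\Delta W_i$ independent of $\mathcal F_{t_i}$, the freezing lemma gives $\widehat V_i=\hat v_i(X_i^\pi)$, where
\[
\hat v_i(x)=\mathbb E\left[\mathcal U_{i+1}^\lambda\bigl(x+\mu(t_i,x)\Delta t_i+\sigma(t_i,x)\Delta W_i\bigr)\right].
\]
This function is Borel measurable by Fubini, since $\mathcal U_{i+1}^\lambda$ is either a neural network or $g$, hence continuous. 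It is finite because $\mathcal U_{i+1}^\lambda$ has linear growth, being Lipschitz or a continuous network with bounded-growth activations; in the GroupSort setting, or with $g$ Lipschitz by \ref{HL}(\ref{HL_6}), this is immediate. We then define
\[
v_i^\lambda(x):=\mathcal A_i^{-1}\bigl(x,\hat v_i(x)\bigr).
\]

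\textbf{Measurability of the inverse (main obstacle).} The key technical point is that $(x,\hat y)\mapsto\mathcal A_i^{-1}(x,\hat y)$ is jointly Borel. $\mathcal A_i$ is Borel in $x$ because $f$ and $h$ are measurable, and it is continuous and strictly increasing in $y$. We can therefore write
\[
\{\mathcal A_i^{-1}(x,\hat y)>c\}=\{\mathcal A_i(x,c)<\hat y\}.
\]
This set is Borel in $(x,\hat y)$ for each real $c$, so the inverse is jointly Borel. Alternatively, one can note that, for $\lambda\Delta t_i$ small, the inverse is a limit of Picard iterates of Borel maps. In general the monotone-inverse identity above is cleaner, and it requires no smallness of $\lambda\Delta t_i$. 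Composing, $v_i^\lambda$ is measurable and $V_i^\lambda=v_i^\lambda(X_i^\pi)$ is $\mathcal F_{t_i}$-measurable, which is \eqref{v function}.

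\textbf{Integrability.} It remains to show that $V_i^\lambda\in L^2(\Omega)$. By \eqref{inq:lower-estimate-A},
\[
|V_i^\lambda|\leq|\mathcal A_i^{-1}(X_i^\pi,0)|+(1-[f]\Delta t_i)^{-1}|\widehat V_i|.
\]
Next, $y_0:=\mathcal A_i^{-1}(x,0)$ satisfies $(1-[f]\Delta t_i)|y_0|\leq|\mathcal A_i(x,0)|$, obtained by taking $y_1=y_0$ and $y_2=0$ in \eqref{inq:lower-estimate-A}. We also have
\[
|\mathcal A_i(x,0)|\leq\Delta t_i\bigl(|f(t_i,x,0)|+\lambda|h(t_i,x)|\bigr)\leq C_\lambda(1+|x|)
\]
by the growth conditions in \ref{HL}. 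Hence $|V_i^\lambda|\leq C_\lambda(1+|X_i^\pi|+|\widehat V_i|)$. The right-hand side is square integrable, by the Euler moment bounds for $X_i^\pi$ and by conditional Jensen applied to $\mathcal U_{i+1}^\lambda(X_{i+1}^\pi)\in L^2$, which holds by linear growth. The constant here may depend on $\lambda$, and that is harmless for this lemma.
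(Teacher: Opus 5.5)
Your proposal is correct and follows the paper's proof essentially step by step: you reduce \eqref{fixed point} to $\mathcal A_i(X_i^\pi,V_i^\lambda)=\mathbb E_{t_i}[\mathcal U_{i+1}^\lambda(X_{i+1}^\pi)]$, invert it via Lemma~\ref{lemma:inv}, write the conditional expectation as a function of $X_i^\pi$ using the independence of $\Delta W_i$, and get $L^2$ from the inverse-Lipschitz bound together with the linear growth of $f^\lambda(t_i,\cdot,0)$. Your identity $\{\mathcal A_i^{-1}(x,\hat y)>c\}=\{\mathcal A_i(x,c)<\hat y\}$ also supplies the joint measurability of $\mathcal A_i^{-1}$, which the paper only asserts.
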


\begin{proof}
Fix \(i\in\{0,\ldots,N-1\}\) and set
\[ \xi_i^\lambda := \mathbb E_{t_i}\left[ \mathcal U_{i+1}^\lambda (X_{i+1}^\pi) \right]. \]

By construction, \(\mathcal U_{i+1}^\lambda\) is either the terminal function \(g\) or an admissible neural network with Lipschitz activation functions. Hence, \(\mathcal U_{i+1}^\lambda\) is Lipschitz continuous and therefore satisfies
\[
|\mathcal U_{i+1}^\lambda(x)|
\leq
|\mathcal U_{i+1}^\lambda(0)|
+
L_{i+1}^\lambda|x|,
\]
for some constant \(L_{i+1}^\lambda<\infty\).
Since \(X_{i+1}^\pi\in L^2(\Omega)\), it follows that
\[
\mathcal U_{i+1}^\lambda(X_{i+1}^\pi)\in L^2(\Omega),
\]
and therefore $
\xi_i^\lambda
\in
L^2(\Omega,\mathcal F_{t_i}).
$

Equation \eqref{fixed point} is equivalent to
\[ \mathcal A_i(X_{i}^\pi,V_i^\lambda) = \xi_i^\lambda. \]
By Lemma~\ref{lemma:inv}, it admits the unique solution
\begin{equation}
V_i^\lambda = \mathcal A_i^{-1} \left( X_{i}^\pi,\xi_i^\lambda \right).
\label{eq:Vi-inverse}
\end{equation}

It remains to verify that \(V_i^\lambda\in L^2(\Omega)\). Applying \eqref{eq:inverse-Lipschitz} with \( \hat y_1=\xi_i^\lambda,\, \hat y_2=\mathcal A_i(X_{i}^\pi,0)\), and using
\( \mathcal A_i^{-1} \left( X_{i}^\pi, \mathcal A_i(X_{i}^\pi,0) \right) =0, \) we obtain
\begin{equation*}
|V_i^\lambda| \leq \frac{ |\xi_i^\lambda| + \Delta t_i |f^\lambda(t_i,X_{i}^\pi,0)| }{ 1-[f]\Delta t_i }.
\end{equation*}
		Under {\ref{HL}(\ref{HL_3},\ref{HL_4})},
		\[
		|f^\lambda(t,x,0)|
		\leq
		C(1+\lambda)(1+|x|).
		\]
		Therefore, since \(X_{i}^\pi\in L^2(\Omega)\) and
		\(\xi_i^\lambda\in L^2(\Omega)\), the above estimate of $V_i^\lambda$ implies
		\[
		V_i^\lambda\in L^2(\Omega).
		\]
		
		We now prove the Markovian representation. The Euler scheme satisfies
		\[
		X_{i+1}^\pi
		=
		X_{i}^\pi
		+
		\mu(t_i,X_{i}^\pi)\Delta t_i
		+
		\sigma(t_i,X_{i}^\pi)\Delta W_i,
		\]
		where the increment $ \Delta W_i=W_{t_{i+1}}-W_{t_i}$
		is independent of \(\mathcal F_{t_i}\). Define
		\[
		\psi_i^\lambda(x)
		:=
		\mathbb E\left[
		\mathcal U_{i+1}^\lambda
		\left(
		x+\mu(t_i,x)\Delta t_i+\sigma(t_i,x)\Delta W_i
		\right)
		\right].
		\]
		Then
		\[
		\xi_i^\lambda
		=
		\psi_i^\lambda(X_{i}^\pi).
		\]
		
		For \(x\in\mathbb R^d\), define
		\begin{equation*}
			v_i^\lambda(x)
			:=
			\mathcal A_i^{-1}
			\left(
			x,\psi_i^\lambda(x)
			\right).
		\end{equation*}
		Since both \(\mathcal A_i^{-1}\) and \(\psi_i^\lambda\) are measurable,
		\(v_i^\lambda\) is measurable. Combining
		\eqref{eq:Vi-inverse} with the representation of \(\xi_i^\lambda\),
		we obtain
		\[
		V_i^\lambda
		=
		v_i^\lambda(X_i^\pi),
		\]
		which proves \eqref{v function}.
	\end{proof}
	
	The functions \(v_i^\lambda\) provide the natural reference functions
	for measuring the approximation capacity of the neural-network classes.
	For \(i\in\{0,\ldots,N-1\}\), we therefore define
	\begin{equation}
		\varepsilon_i^{\mathcal N,v^\lambda}
		:=
		\inf_{\theta}
		\mathbb E\left[
		\left|
		v_i^\lambda(X_{i}^\pi)
		-
		\mathcal U(X_{i}^\pi;\theta)
		\right|^2
		\right],
		\label{eq:NN-v-error}
	\end{equation}
	where the infimum is taken over the admissible parameters of the
	neural-network class under consideration.
	
	We next relate this approximation error to the loss function used in
	the DBDP-PBSDE scheme. Set
	\[
	M_{i+1}^\lambda
	:=
	\mathcal U_{i+1}^\lambda(X_{i+1}^\pi) - \mathbb{E}_{t_i}\left[\mathcal U_{i+1}^\lambda(X_{i+1}^\pi) \right].
	\]
	Then
	\[
	\mathbb E_{t_i}[M_{i+1}^\lambda]=0,
	\]
	while \eqref{fixed point} gives
	\begin{equation}
		\mathbb{E}_{t_i}\left[\mathcal U_{i+1}^\lambda(X_{i+1}^\pi) \right]
		=
		\mathcal A_i(X_{i}^\pi,V_i^\lambda).
		\label{eq:mi-A}
	\end{equation}

	The loss function \eqref{lossY} at time \(t_i\) can be written
	\begin{align*}
		L_i^\lambda(\theta)
		&=
		\mathbb E\left[
		\left|
		\mathcal U_{i+1}^\lambda(X_{i+1}^\pi)
		-
		\mathcal U(X_{i}^\pi;\theta)
		+
		f^\lambda(t_i,X_{i}^\pi,\mathcal U(X_{i}^\pi;\theta))\Delta t_i
		\right|^2
		\right]
		\nonumber\\
		&=
		\mathbb E\left[
		\left|
		\mathcal U_{i+1}^\lambda(X_{i+1}^\pi)
		-
		\mathcal A_i(X_{i}^\pi,,\mathcal U(X_{i}^\pi;\theta))
		\right|^2
		\right].
	\end{align*}
	By writing 
	\[
	\mathcal U_{i+1}^\lambda(X_{i+1}^\pi) =\mathbb{E}_{t_i}\left[\mathcal U_{i+1}^\lambda(X_{i+1}^\pi) \right] + M_{i+1}^\lambda,
	\]
	and using
	\[
	\mathcal A_i(X_{i}^\pi,\mathcal U(X_{i}^\pi;\theta))
	\in\mathcal F_{t_i},
	\]
	together with expression \eqref{eq:mi-A} it follows
\begin{align}
L_i^\lambda(\theta) = \mathbb E\left[\left|M_{i+1}^\lambda\right|^2\right] + \mathbb E\left[ \left| \mathcal A_i(X_{i}^\pi,V_i^\lambda) - \mathcal A_i(X_{i}^\pi,\mathcal U(X_{i}^\pi;\theta)) \right|^2 \right].
\label{eq:loss-decomposition}
\end{align}
	
$\;$\\
The first term  in \eqref{eq:loss-decomposition} is the conditional-variance contribution associated with
 \(\mathcal U_{i+1}^\lambda(X_{i+1}^\pi)\) and is independent of the network parameter
 \(\theta\). Consequently, the minimization of
 \(L_i^\lambda\) is equivalent to minimizing
 \[
 \mathbb E\left[
 \left|
 \mathcal A_i(X_{i}^\pi,V_i^\lambda)
 -
 \mathcal A_i(X_{i}^\pi,\mathcal U(X_{i}^\pi;\theta))
 \right|^2
 \right].
 \]
 Thus, the loss compares the network output
 \(\mathcal{U}(X_{i}^\pi;\theta)\) with the auxiliary target
 \(V_i^\lambda=v_i^\lambda(X_{i}^\pi)\) through the transformation
 \(\mathcal A_i\). The coercivity and Lipschitz estimates for
 \(\mathcal A_i\) established above then allow this transformed error to
 be related directly to
 \(\mathbb E\left[\left|V_i^\lambda-\mathcal{U}(X_{i}^\pi;\theta)\right|^2\right]\).

\begin{lemma}
\label{lemma:local-consistency}
Assume \ref{HL}(\ref{HL_1}--\ref{HL_6}). For \(\lambda\geq1\), let \((Y^\lambda,Z^\lambda)\) be the solution of the penalized BSDE \eqref{PBSDE}. For \(i=0,\ldots,N-1\), define
				\[
				\mathcal E_i^{f,\lambda}
				:=
				\int_{t_i}^{t_{i+1}}
				\left[
				f^\lambda(s,X_s,Y_s^\lambda)
				-
				f^\lambda(t_i,X_{t_i},Y_{t_i}^\lambda)
				\right]\mathrm{d}s.
				\]
				Then there exists a constant \(C>0\), independent of
				\(\lambda\geq1\), \(N\), and \(|\pi|\), such that
				\begin{equation}
					\sum_{i=0}^{N-1}
					\frac{1}{|\pi|}
					\mathbb E
					\left[
					\left|
					\mathbb E_{t_i}
					\left[
					\mathcal E_i^{f,\lambda}
					\right]
					\right|^2
					\right]
					\leq
					C\left(
					|\pi|+\lambda^2|\pi|
					\right).
					\label{eq:local-consistency}
				\end{equation}
			\end{lemma}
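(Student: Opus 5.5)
The plan is to discard the conditional expectation by Jensen and reduce \eqref{eq:local-consistency} to an $L^2$ time-regularity estimate for $Y^\lambda$ that is \emph{uniform} in $\lambda$. By conditional Jensen and Cauchy--Schwarz in time,
\[
\mathbb E\Big[\big|\mathbb E_{t_i}[\mathcal E_i^{f,\lambda}]\big|^2\Big]\leq \mathbb E\big[|\mathcal E_i^{f,\lambda}|^2\big]\leq |\pi|\,\mathbb E\Big[\int_{t_i}^{t_{i+1}}\big|f^\lambda(s,X_s,Y_s^\lambda)-f^\lambda(t_i,X_{t_i},Y_{t_i}^\lambda)\big|^2\,\mathrm ds\Big].
\]
So the left-hand side of \eqref{eq:local-consistency} is bounded by $\sum_i\mathbb E\int_{t_i}^{t_{i+1}}|f^\lambda(s,\cdot)-f^\lambda(t_i,\cdot)|^2\mathrm ds$.

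Next I would split the integrand. Using \ref{HL}(\ref{HL_3}) and \ref{HL}(\ref{HL_5}), together with the fact that $r\mapsto r^-$ is $1$-Lipschitz, I get for $s\in[t_i,t_{i+1}]$
\[
\big|f^\lambda(s,X_s,Y_s^\lambda)-f^\lambda(t_i,X_{t_i},Y_{t_i}^\lambda)\big|\leq C(1+\lambda)\Big(|s-t_i|^{1/2}+|X_s-X_{t_i}|+|Y^\lambda_s-Y^\lambda_{t_i}|\Big).
\]
The first two terms contribute $C(1+\lambda)^2|\pi|$ after summation, by \eqref{eq:standard_X_estimate}. It therefore remains to show
\[
\sum_{i=0}^{N-1}\mathbb E\int_{t_i}^{t_{i+1}}|Y^\lambda_s-Y^\lambda_{t_i}|^2\,\mathrm ds\leq C|\pi|
\]
with $C$ independent of $\lambda$. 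This is the main obstacle: the naive route, writing $Y^\lambda_s=u^\lambda(s,X_s)$ and estimating via the $\lambda$-Lipschitz driver, would cost another factor $\lambda^2$ and spoil the rate.

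To avoid that loss, I would use the $K^\lambda$-form of the penalized equation. For $s\in[t_i,t_{i+1}]$,
\[
Y^\lambda_s-Y^\lambda_{t_i}=-\int_{t_i}^s f(r,X_r,Y_r^\lambda)\,\mathrm dr-(K^\lambda_s-K^\lambda_{t_i})+\int_{t_i}^s Z^\lambda_r\,\mathrm dW_r .
\]
Since $K^\lambda$ is nondecreasing, the middle term is bounded by $\Delta K_i:=K^\lambda_{t_{i+1}}-K^\lambda_{t_i}$. Hence
\[
\sup_{s\in[t_i,t_{i+1}]}|Y^\lambda_s-Y^\lambda_{t_i}|^2\leq 3|\pi|\int_{t_i}^{t_{i+1}}|f(r,X_r,Y^\lambda_r)|^2\mathrm dr+3|\Delta K_i|^2+3\sup_{s}\Big|\int_{t_i}^sZ^\lambda\,\mathrm dW\Big|^2 .
\]
Taking expectations, I use Doob's inequality on the stochastic integral. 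Summing over $i$ and using $\sum_i|\Delta K_i|^2\leq (K^\lambda_T)^2$, which holds because the $\Delta K_i$ are nonnegative, gives
\[
\sum_i\mathbb E\sup_{[t_i,t_{i+1}]}|Y^\lambda_s-Y^\lambda_{t_i}|^2\leq C\,\mathbb E\Big[\int_0^T|f(r,X_r,Y_r^\lambda)|^2\mathrm dr+|K_T^\lambda|^2+\int_0^T|Z^\lambda_r|^2\mathrm dr\Big]\leq C .
\]
The final bound is uniform in $\lambda\geq1$ by \eqref{eq:uniform-penalization}, \eqref{Xmoment} and the linear growth of $f$. Multiplying by $|\pi|$ gives the desired $C|\pi|$.

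Putting the pieces together, the left-hand side of \eqref{eq:local-consistency} is at most $C(1+\lambda)^2|\pi|\leq C(|\pi|+\lambda^2|\pi|)$.
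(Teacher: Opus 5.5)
Your proposal is correct and follows essentially the same route as the paper. You drop the conditional expectation by Jensen and Cauchy--Schwarz, use a $C(1+\lambda)$ Lipschitz bound, and prove the $\lambda$-uniform estimate $\sum_i\int_{t_i}^{t_{i+1}}\mathbb E|Y^\lambda_s-Y^\lambda_{t_i}|^2\,\mathrm ds\le C|\pi|$ from the $K^\lambda$-form, the monotonicity of $K^\lambda$, $\sum_i(\Delta K_i)^2\le (K^\lambda_T)^2$ and \eqref{eq:uniform-penalization}. The differences are cosmetic: the paper splits the error as $2I_f+2\lambda^2 I_{\text{pen}}$ and bounds the stochastic integral pointwise in $s$ by the It\^o isometry, whereas you merge the terms and use Doob's inequality on the supremum.
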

		
\begin{proof}
Throughout the proof, $C>0$ denotes a generic constant independent of $\lambda$, $N$, and $|\pi|$.

Fix $\lambda \geq  1$. We write 
$$ \mathcal{E}_i^{f,\lambda} = \mathcal{E}_i^f + \lambda\mathcal{E}_i^\text{pen} $$
where
$$ \mathcal{E}_i^f = \int_{t_i}^{t_{i+1}} \left( f(s,X_s,Y_s^\lambda)- f(t_i,X_{t_i},Y_{t_i}^\lambda)\mathrm{d}s \right)$$
and
$$ \mathcal{E}_i^{\text{pen}} = \int_{t_i}^{t_{i+1}}((Y_s^\lambda - h(s,X_s))^- - (Y_{t_i}^\lambda - h(t_i,X_{t_i}))^-)\mathrm{d}s,$$
			
hence
\begin{equation}
\sum_{i=0}^{N-1}\frac{1}{|\pi|}\mathbb{E}\left[\left|\mathbb{E}_{t_i}\left[\mathcal{E}_i^{f,\lambda}\right]\right|^2\right] \leq  2I_f+2\lambda^2 I_{\text{pen}},
\label{global_bound}
\end{equation}

with
$$I_f = \sum_{i=0}^{N-1}\frac{1}{|\pi|}\mathbb{E}\left[\left|\mathbb{E}_{t_i}\left[\mathcal{E}_i^f\right]\right|^2\right] \;\; \text{and} \;\; I_{\text{pen}} = \sum_{i=0}^{N-1}\frac{1}{|\pi|}\mathbb{E}\left[\left|\mathbb{E}_{t_i}\left[\mathcal{E}_i^{\text{pen}}\right]\right|^2\right] . $$

\textit{Step 1. Estimation of $I_f$.} By conditional Jensen's inequality and Hölder's inequality we have
\begin{align*}
I_f &\leq  \sum_{i=0}^{N-1}\frac{1}{|\pi|}\mathbb{E}\left[\left|\mathcal{E}_i^f\right|^2\right]\\
&\leq  \sum_{i=1}^{N-1}\int_{t_i}^{t_{i+1}}\mathbb{E}\left[|f(s,X_s,Y_s) -f(t_i,X_{t_i},Y_{t_i}^\lambda)|^2\right]\mathrm{d}s
\end{align*}

Lipschitz continuity of $f$, combined with the estimate \eqref{eq:standard_X_estimate} gives
\begin{equation*}
I_f\leq  C\left( |\pi| + \sum_{i=1}^{N-1}\int_{t_i}^{t_{i+1}} \mathbb{E}\left[|Y_s - Y^\lambda_{t_i}|^2\right] \mathrm{d}s\right).
\end{equation*}

Now let \(s\in[t_i,t_{i+1}]\), from the expression of PBSDE \eqref{PBSDE}, we have
\begin{equation*}
Y_s^\lambda-Y_{t_i}^\lambda = -\int_{t_i}^s f(r,X_r,Y_r^\lambda)\mathrm{d}s -
(K_s^\lambda-K_{t_i}^\lambda) + \int_{t_i}^s Z_r^\lambda\,\mathrm{d}W_r,
\end{equation*}

it follows that
\begin{equation}
\mathbb{E}\left[ |Y_s^\lambda-Y_{t_i}^\lambda|^2 \right] \leq  C\left( \mathbb{E}\left[\left|\int_{t_i}^sf(r,X_r,Y_r^\lambda)\mathrm{d}r\right|^2\right]  + \mathbb{E}\left[|K^\lambda_s - K^\lambda_{t_i} |^2\right] + \mathbb{E}\left[\int_{t_i}^s|Z^\lambda_r|^2\mathrm{d}r\right] \right) \label{eq:Y-increment-bound}
\end{equation}

For the first term in the right side of \eqref{eq:Y-increment-bound}, by Hölder's inequality, we have
\begin{align*}
\left|\int_{t_i}^s f(r,X_r,Y_r^\lambda)\mathrm{d}r\right|^2 &\leq  (s-t_i)\int_{t_i}^s|f(r,X_r,Y^\lambda_r)|^2\mathrm{d}r\\
	&\leq  |\pi|\int_{t_i}^{t_{i+1}}|f(r,X_r,Y^\lambda_r)|^2\mathrm{d}r,
\end{align*}

thus
\begin{align*}
\sum_{i=0}^{N-1}\int_{t_i}^{t_{i+1}}\mathbb{E}\left[\left|\int_{t_i}^s f(r,X_r,Y_r^\lambda)\mathrm{d}r\right|^2\right]\mathrm{d}s &\leq  |\pi|^2 \sum_{i=0}^{N-1}\int_{t_i}^{t_{i+1}}\mathbb{E}\left[|f(r,X_r,Y^\lambda_r)|^2\right]\mathrm{d}r\\
	&= |\pi|^2\mathbb{E}\left[\int_0^T |f(r,X_r,Y^\lambda_r)|^2\mathrm{d}r \right]
\end{align*}

By \ref{HL}(\ref{HL_3},\ref{HL_4}), we have $ |f(r,X_r,Y_r^\lambda)|^2 \leq C\left( 1+|X_r|^2+|Y_r^\lambda|^2 \right)$, combining this inequality  with standard estimate \eqref{Xmoment} and \eqref{eq:uniform-penalization} gives
$$ \sup_{\lambda\geq1} \mathbb{E}\left[\int_0^T |f(r,X_r,Y^\lambda_r)|^2\mathrm{d}r\right] \leq  C$$

Consequently,
\begin{equation*}
\sum_{i=0}^{N-1}\int_{t_i}^{t_{i+1}}\mathbb{E}\left[\left|\int_{t_i}^s f(r,X_r,Y_r^\lambda)\mathrm{d}r\right|^2\right]\mathrm{d}s  \leq  C|\pi|^2.
\end{equation*}

For the second term in the right side of \eqref{eq:Y-increment-bound}, since $K^\lambda$ is increasing, then for $s\in[t_i,t_{i+1}]$,
$$0\leq K_s^\lambda-K_{t_i}^\lambda\leq K_{t_{i+1}}^\lambda-K_{t_i}^\lambda,$$
therefore
\begin{align*}
\sum_{i=0}^{N-1}\int_{t_i}^{t_{i+1}}\mathbb{E}\left[|K_s^\lambda-K_{t_i}^\lambda|^2\right]\mathrm{d}s &\leq  |\pi|\mathbb{E}\left[\sum_{i=0}^{N-1} |K_{t_{i+1}}^\lambda-K_{t_i}^\lambda|^2\right]\\
	&\leq  |\pi|\mathbb{E}\left[\left(\sum_{i=0}^{N-1} (K_{t_{i+1}}^\lambda-K_{t_i}^\lambda)\right)^2\right]\\
	&=|\pi|\mathbb{E}\left[|K^\lambda_T|^2\right].
\end{align*}

Hence, by \eqref{eq:uniform-penalization}
\begin{equation*}
\sum_{i=0}^{N-1}\int_{t_i}^{t_{i+1}}\mathbb{E}\left[|K_s^\lambda-K_{t_i}^\lambda|^2\right]\mathrm{d}s \leq  C|\pi|.
\end{equation*}

Finally, for the last term in the right side of \eqref{eq:Y-increment-bound}, by using \eqref{eq:uniform-penalization} we have
\begin{align*}
\sum_{i=0}^{N-1}\int_{t_i}^{t_{i+1}}\mathbb{E}\left[\int_{t_i}^s|Z_r^\lambda|^2\mathrm{d}r\right]\mathrm{d}s &\leq  \sum_{i=0}^{N-1}\int_{t_i}^{t_{i+1}} \mathbb{E}\left[\int_{t_i}^{t_{i+1}}|Z_r^\lambda|^2\mathrm{d}r\right]\mathrm{d}s \nonumber \\
	&\leq  |\pi|\sum_{i=0}^{N-1}\mathbb{E}\left[\int_{t_i}^{t_{i+1}}|Z_r^\lambda|^2\mathrm{d}r\right] \nonumber \\
	&= |\pi|\mathbb{E}\left[\int_0^T|Z^\lambda_r|^2\mathrm{d}r\right]\leq C|\pi|.
\end{align*}

Coming back to \eqref{eq:Y-increment-bound}, we have
\begin{equation}
\sum_{i=0}^{N-1} \int_{t_i}^{t_{i+1}}\mathbb{E}\left[|Y_s^\lambda-Y_{t_i}^\lambda|^2\right]\mathrm{d}s \leq  C|\pi|.
\label{sum_int_Y}
\end{equation}

Therefore
\begin{equation}
I_f \leq  C|\pi|.
\label{estim:I_f}
\end{equation}

\textit{Step 2. Estimation of $I_\text{pen}$.}
By conditional Jensen's inequality and Hölder's inequality we have
\begin{align*}
I_\text{pen} &\leq  \sum_{i=0}^{N-1}\frac{1}{|\pi|}\mathbb{E}\left[\left|\mathcal{E}_i^\text{pen}\right|^2\right]\\
&\leq  \sum_{i=1}^{N-1}\int_{t_i}^{t_{i+1}}\mathbb{E}\left[|(Y_s^\lambda - h(s,X_s))^- - (Y_{t_i}^\lambda - h(t_i,X_{t_i}))^-|^2\right]\mathrm{d}s.
\end{align*}

For $s\in[t_i,t_{i+1}]$, since $x\mapsto x^-$ is 1--Lipschitz, and $h$ satisfies \ref{HL}(\ref{HL_5}), we have 
\begin{align*}
|(Y_s^\lambda - h(s,X_s))^- - (Y_{t_i}^\lambda - h(t_i,X_{t_i}))^-|^2 &\leq  2|Y_s^\lambda - Y_{t_i}^\lambda|^2 + 2|h(s,X_s) - h(t_i,X_{t_i})|^2\\
	&\leq  C\left(|\pi| + |X_s - X_{t_i}|^2 + |Y_s^\lambda - Y_{t_i}^\lambda|^2\right),
\end{align*}

by using \eqref{eq:standard_X_estimate} and \eqref{sum_int_Y}, it follows that
\begin{align}
I_\text{pen} &\leq  C\sum_{i=0}^{N-1}\int_{t_i}^{t_{i+1}}\left(|\pi| + \mathbb{E}\left[|X_s - X_{t_i}|^2\right] + \mathbb{E}\left[|Y_s^\lambda - Y_{t_i}^\lambda|^2\right]\right)\mathrm{d}s \nonumber \\
	&\leq  C|\pi|. \label{estim:I_p}
\end{align}

By combining \eqref{global_bound} with \eqref{estim:I_f} and \eqref{estim:I_p}, we obtain the desired result.
\end{proof}

We prove now the first main estimate of this section.
\begin{theorem}
\label{thrm:0}
Assume that \ref{HL}(\ref{HL_1}--\ref{HL_6}) hold. Let \(\lambda\geq1\) and suppose that $[f]|\pi|\leq\frac12,\, \lambda|\pi|\leq C_0$, for some constant \(C_0>0\) independent of \(\lambda\) and \(|\pi|\). Assume moreover that at each time step the minimisation problem \eqref{minimisation_problem} admits a minimizer \(\theta_{i,\lambda}^*\in\Theta_m\). Then there exists a constant \(C>0\), independent of \(\lambda\), \(N\), and \(|\pi|\), such that
\begin{equation}
\max_{0\leq i\leq N-1} \left\| Y_{t_i}^\lambda - \mathcal U_i^\lambda(X_{i}^\pi) \right\|_2 \leq C\left[ (1+\lambda)|\pi|^{1/2} + \sum_{i=0}^{N-1} \sqrt{\varepsilon_i^{\mathcal N,v^\lambda}} \right].
\label{eq:main-Y-error-L2}
\end{equation}
\end{theorem}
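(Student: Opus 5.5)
We propose to follow the standard DBDP error analysis of Huré--Pham--Warin, adapted so that the penalization term is never estimated through its Lipschitz constant $\lambda$ but only through its monotonicity. We split the error at time $t_i$ as
\[
Y_{t_i}^\lambda-\mathcal U_i^\lambda(X_i^\pi)=\bigl(Y_{t_i}^\lambda-V_i^\lambda\bigr)+\bigl(V_i^\lambda-\mathcal U_i^\lambda(X_i^\pi)\bigr),
\]
where $V_i^\lambda$ is the auxiliary implicit target of Lemma~\ref{lemma-v}. We then derive a one-step recursion for $\|Y_{t_i}^\lambda-V_i^\lambda\|_2$ in terms of $\|Y_{t_{i+1}}^\lambda-\mathcal U_{i+1}^\lambda(X_{i+1}^\pi)\|_2$, and sum it backwards.

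\emph{Step 1: the network term.} From the loss decomposition \eqref{eq:loss-decomposition}, $\theta_{i,\lambda}^*$ minimizes $\mathbb E|\mathcal A_i(X_i^\pi,V_i^\lambda)-\mathcal A_i(X_i^\pi,\mathcal U(X_i^\pi;\theta))|^2$. The lower bound \eqref{inq:lower-estimate-A} gives
\[
(1-[f]|\pi|)^2\,\mathbb E|V_i^\lambda-\mathcal U_i^\lambda(X_i^\pi)|^2 \le \inf_\theta \mathbb E|\mathcal A_i(V_i^\lambda)-\mathcal A_i(\mathcal U(\cdot;\theta))|^2 .
\]
For the upper bound, $\mathcal A_i(x,\cdot)$ is Lipschitz with constant $1+([f]+\lambda)|\pi|\le 1+[f]/2+C_0$. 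This is exactly where the hypothesis $\lambda|\pi|\le C_0$ enters. Hence $\|V_i^\lambda-\mathcal U_i^\lambda(X_i^\pi)\|_2\le C\sqrt{\varepsilon_i^{\mathcal N,v^\lambda}}$ with $C$ independent of $\lambda$.

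\emph{Step 2: the consistency recursion.} Write $Y_{t_i}^\lambda=\mathbb E_{t_i}[Y_{t_{i+1}}^\lambda]+f^\lambda(t_i,X_{t_i},Y_{t_i}^\lambda)|\pi|+\mathbb E_{t_i}[\mathcal E_i^{f,\lambda}]$. Subtract the fixed-point identity \eqref{fixed point} to get
\[
\mathcal A_i(X_i^\pi,Y_{t_i}^\lambda)-\mathcal A_i(X_i^\pi,V_i^\lambda)=\mathbb E_{t_i}\bigl[Y_{t_{i+1}}^\lambda-\mathcal U_{i+1}^\lambda(X_{i+1}^\pi)\bigr]+R_i ,
\]
with $R_i:=\mathbb E_{t_i}[\mathcal E_i^{f,\lambda}]+|\pi|\bigl(f^\lambda(t_i,X_{t_i},Y_{t_i}^\lambda)-f^\lambda(t_i,X_i^\pi,Y_{t_i}^\lambda)\bigr)$. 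Apply the inverse Lipschitz bound \eqref{eq:inverse-Lipschitz-simple}, which is uniform in $\lambda$ because it relies only on monotonicity (Lemma~\ref{lemma:inv}). This yields
\[
\|Y_{t_i}^\lambda-V_i^\lambda\|_2\le(1+2[f]|\pi|)\Bigl(\|Y_{t_{i+1}}^\lambda-\mathcal U_{i+1}^\lambda(X_{i+1}^\pi)\|_2+\|R_i\|_2\Bigr).
\]
The forward-perturbation part of $R_i$ is bounded by $|\pi|(1+\lambda)C\|X_{t_i}-X_i^\pi\|_2\le C(1+\lambda)|\pi|^{3/2}$, using \eqref{platen} and the Lipschitz bounds on $f$ and $h$. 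A subtle point is that $Y_{t_{i+1}}^\lambda$ must be compared with $\mathcal U_{i+1}^\lambda(X_{i+1}^\pi)$ and not with $\mathcal U_{i+1}^\lambda(X_{t_{i+1}})$; this is fine because the recursion is written directly in terms of the quantity we want to bound. At $i+1=N$, we have $\|g(X_T)-g(X_N^\pi)\|_2\le C|\pi|^{1/2}$.

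\emph{Step 3: summation, and the main obstacle.} Combining Steps 1 and 2 gives $e_i\le(1+2[f]|\pi|)(e_{i+1}+\|R_i\|_2)+C\sqrt{\varepsilon_i^{\mathcal N,v^\lambda}}$. Iterating, and using $(1+2[f]|\pi|)^N\le e^{2[f]T}$, we obtain
\[
e_i\le C\Bigl(|\pi|^{1/2}+\sum_j\|R_j\|_2+\sum_j\sqrt{\varepsilon_j^{\mathcal N,v^\lambda}}\Bigr).
\]
The hard part is $\sum_j\|\mathbb E_{t_j}\mathcal E_j^{f,\lambda}\|_2$. Cauchy--Schwarz gives $\sum_j\|\cdot\|_2\le\bigl(N\sum_j\|\cdot\|_2^2\bigr)^{1/2}=\bigl(T\sum_j|\pi|^{-1}\|\cdot\|_2^2\bigr)^{1/2}$. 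Lemma~\ref{lemma:local-consistency} then bounds this by $C(1+\lambda)|\pi|^{1/2}$. The forward terms sum to $N\cdot C(1+\lambda)|\pi|^{3/2}=C(1+\lambda)|\pi|^{1/2}$. The one thing to verify carefully is that the Lipschitz constant $\lambda$ of the penalty appears only additively, through $R_i$ and Lemma~\ref{lemma:local-consistency}, and never multiplicatively in the propagation factor. Monotonicity of $y\mapsto(y-h)^-$ in Lemma~\ref{lemma:inv} guarantees this, which yields \eqref{eq:main-Y-error-L2}.
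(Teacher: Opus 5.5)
Your proposal is correct and follows essentially the same route as the paper's proof. It uses the same split through $V_i^\lambda$ and the coercivity/Lipschitz sandwich on $\mathcal A_i$ for the network term, which is where $\lambda|\pi|\le C_0$ enters. It then relies on the same $\lambda$-uniform inverse-Lipschitz bound from monotonicity of the penalty, the $C\lambda|\pi|^{3/2}$ Euler perturbation, discrete Gronwall, and Cauchy--Schwarz combined with Lemma~\ref{lemma:local-consistency}. The only slip is cosmetic: the Lipschitz constant $1+([f]+\lambda)|\pi|$ of $\mathcal A_i(x,\cdot)$ is bounded by $\tfrac32+C_0$ (using $[f]|\pi|\le\tfrac12$), not by $1+[f]/2+C_0$, and this changes nothing.
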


\begin{proof}
Throughout the proof, \(C>0\) denotes a generic constant which may change from line to line, but is independent of \(\lambda\geq1\), \(N\), and \(|\pi|\).\\

Let $i\in\{0,\dots,N-1\}$, from Lemma \ref{lemma-v} we have

\begin{equation*}
V_i^\lambda = \mathbb{E}_{t_i}\left[\mathcal{U}^\lambda_{i+1}\left(X^\pi_{i+1}\right)\right]+f^\lambda(t_i,X_{i}^\pi,V_i^\lambda)\Delta t_i .
\end{equation*}

On the other hand, from the equation (\ref{PBSDE}) and by definition of $\mathcal{E}_i^{f,\lambda}$ we have 
\begin{equation*}
Y_{t_i}^\lambda = \mathbb{E}_{t_i}\left[Y^\lambda_{t_{i+1}}+f^\lambda(t_i,X_{t_i},Y_{t_i}^\lambda)\Delta t_i + \mathcal{E}_i^{f,\lambda}\right] .
\end{equation*}

By definition of the application $\mathcal{A}_i$ in Lemma \ref{lemma:inv} we have 
\begin{align*}
\mathcal{A}_i(X_{i}^\pi ; Y_{t_i}^\lambda) &= Y_{t_i}^\lambda - f^\lambda(t_i,X_{i}^\pi,Y_{t_i}^\lambda)\Delta t_i \nonumber \\
	&=\mathbb{E}_{t_i}\left[Y_{t_{i+1}}^\lambda+\mathcal{E}_i^{f,\lambda}\right] + \left( f^\lambda(t_i,X_{t_i},Y_{t_i}^\lambda) - f^\lambda(t_i,X_{i}^\pi,Y_{t_i}^\lambda)\right)\Delta t_i \nonumber \\
	&=\mathbb{E}_{t_i}\left[Y_{t_{i+1}}^\lambda+\mathcal{E}_i^{f,\lambda}\right] + R_i^{X,\lambda} ,
\end{align*}
and
\begin{align*}
\mathcal{A}_i(X_{i}^\pi ; V_i^\lambda) &= V_i^\lambda - f^\lambda(t_i,X_{i}^\pi,V_i^\lambda)\Delta t_i \nonumber  \\
	&=\mathbb{E}_{t_i}\left[\mathcal{U}_{i+1}^\lambda(X^\pi_{i+1})\right], 
\end{align*}

where
$$ R_i^{X,\lambda} := \left( f^\lambda(t_i,X_{t_i},Y_{t_i}^\lambda) - f^\lambda(t_i,X_{i}^\pi,Y_{t_i}^\lambda)\right)\Delta t_i .$$

\textit{Step 1 : Upper bound  of $\|Y_{t_i}^\lambda-V_i^\lambda\|_2$.}
Since the application $\mathcal{A}_i$ is invertible we have
\begin{align*}
\|Y_{t_i}^\lambda-V_i^\lambda\|_2 &= \left\Vert \mathcal{A}_i^{-1}\left(X_{i}^\pi,\mathbb{E}_{t_i}\left[Y_{t_{i+1}}^\lambda+\mathcal{E}_i^{f,\lambda}\right] + R_i^{X,\lambda}\right)  - \mathcal{A}_i^{-1}\left(X_{i}^\pi,\mathbb{E}_{t_i}\left[\mathcal{U}_{i+1}^\lambda(X^\pi_{i+1})\right]\right) \right\Vert_2,
\end{align*}

moreover, $\mathcal{A}_i^{-1}$ is $(1+2[f]|\pi|)$-Lipschitz, by setting
$$ e_i := Y_{t_i}^\lambda - \mathcal U_i^\lambda(X_{i}^\pi), $$
we obtain
\begin{equation*}
\|Y_{t_i}^\lambda-V_i^\lambda\|_2 \leq (1+C\Delta t_i) \Big( \|\mathbb E_{t_i}[e_{i+1}]\|_2 + \|\mathbb E_{t_i} [\mathcal E_i^{f,\lambda}]\|_2  + \|R_i^{X,\lambda}\|_2 \Big).
\end{equation*}

By using Lipschitz continuity of $f$ with respect to $y$, the Lipschitz continuity of \(r\mapsto r^-\), and the estimate \eqref{platen} we obtain
\begin{equation*}
\|R_i^{X,\lambda}\|_2 \leq  C\lambda|\pi|^{3/2},
\end{equation*}
where we used \(\lambda\geq  1\). It follows that
\begin{align}
\|Y_{t_i}^\lambda-V_i^\lambda\|_2 \leq (1+C\Delta t_i) \Big( & \|\mathbb E_{t_i}[e_{i+1}]\|_2 + \|\mathbb E_{t_i} [\mathcal E_i^{f,\lambda}]\|_2 + \lambda|\pi|^{3/2}\Big).
\label{eq:YV-L2-conditional}
\end{align}

Moreover, we have
\begin{align*}
\mathbb{E}\left[\left|\mathbb{E}_{t_i}\left[e_{i+1}\right]\right|^2\right] &\leq \mathbb{E}\left[\mathbb{E}_{t_i}\left[\left|e_{i+1}\right|^2\right]\right]\\
	&\leq \mathbb{E}\left[\left|e_{i+1}\right|^2\right],
\end{align*}

thus
\begin{align}
\|Y_{t_i}^\lambda-V_i^\lambda\|_2 \leq (1+C\Delta t_i) \Big( & \|e_{i+1}\|_2 + \|\mathbb E_{t_i} [\mathcal E_i^{f,\lambda}]\|_2 + \lambda|\pi|^{3/2}\Big).
\label{eq:YV-L2-final}
\end{align}

\textit{Step 2. Upper bound of \(\|V_i^\lambda - \mathcal{U}_i^\lambda(X_{i}^\pi) \|_2\).} By the loss decomposition \eqref{eq:loss-decomposition}, we have
$$  L_i^\lambda(\theta) = \mathbb{E}\left[|M_{i+1}^\lambda|^2\right] + \widetilde{L}_i^\lambda(\theta),$$

where
$$ \widetilde{L}_i^\lambda(\theta) := \mathbb E \left[ \left| \mathcal A_i(X_{i}^\pi,V_i^\lambda) - \mathcal A_i \left( X_{i}^\pi, \mathcal U(X_{i}^\pi;\theta) \right) \right|^2 \right], \qquad  M_{i+1}^\lambda = \mathcal U_{i+1}^\lambda(X_{i+1}^\pi) - \mathbb{E}_{t_i}\left[\mathcal U_{i+1}^\lambda(X_{i+1}^\pi) \right]. $$
	
Since the first term does not depend on \(\theta\), and \(\theta_{i,\lambda}^*\) is a minimizer of $L_i^\lambda$, then it also minimizes \(\widetilde{L}_i^\lambda\).

The lower estimate \eqref{inq:lower-estimate-A} for \(\mathcal A_i\) gives
$$ \widetilde{L}_i^\lambda(\theta) \geq (1-[f]\Delta t_i)^2 \mathbb{E}\left[ \left| V_i^\lambda - \mathcal U(X_{i}^\pi;\theta) \right|^2\right].$$

On the other hand, the Lipschitz continuity of $f$ and $r\mapsto r^-$, together with \(\lambda\Delta t\leq C_0\) imply
\begin{align*}
\widetilde{L}_i^\lambda(\theta) &\leq \left[ 1+([f]+\lambda)\Delta t_i \right]^2 \mathbb{E}\left[ \left| V_i^\lambda - \mathcal U(X_{i}^\pi;\theta) \right|^2\right]\\
	&\leq  C \mathbb{E}\left[ \left| V_i^\lambda - \mathcal U(X_{i}^\pi;\theta) \right|^2\right].
\end{align*}

By combining the above inequalities of $\widetilde{L}_i^\lambda$, and using $[f]\Delta t_i\leq \frac{1}{2}$ we obtain
\begin{align*}
\mathbb{E}\left[ \left| V_i^\lambda - \mathcal{U}_i^\lambda(X_{i}^\pi) \right|^2\right] &\leq  C \widetilde{L}_i^\lambda(\theta_{i,\lambda}^*)\\
	&\leq C\widetilde{L}_i^\lambda(\theta)\\
	&\leq C\mathbb{E}\left[ \left| V_i^\lambda - \mathcal U(X_{i}^\pi;\theta) \right|^2\right] .
\end{align*}

Since it holds for every parameter $\theta$, by taking the infimum we obtain
\begin{equation}
\mathbb E\left[|V_i^\lambda - \mathcal{U}_i^\lambda(X_{i}^\pi) |^2\right] \leq  C\varepsilon_i^{\mathcal N,v^\lambda}. \label{eq:V_U-NN}
\end{equation}

Consequently,
\begin{equation}
\|V_i^\lambda - \mathcal{U}_i^\lambda(X_{i}^\pi) \|_2 \leq  C \sqrt{\varepsilon_i^{\mathcal N,v^\lambda}}.
\label{eq:rho-NN-L2}
\end{equation}

\textit{Step 3. Global \(L^2\)-error estimate.} By adding and subtracting $V_i^\lambda$ in $e_i$, and using \eqref{eq:YV-L2-final} and \eqref{eq:rho-NN-L2}, we obtain
\begin{align*}
\|e_i\|_2 \leq{}& (1+C\Delta t_i)\|e_{i+1}\|_2 + 	C\|\mathbb E_{t_i} [\mathcal E_i^{f,\lambda}]\|_2  + C \lambda|\pi|^{3/2} + C\sqrt{\varepsilon_i^{\mathcal N,v^\lambda}}. 
\end{align*}

By using discrete Grönwall's inequality, we obtain
\begin{equation*}
\max_{0\leq i\leq N-1}\|e_i\|_2 \leq C\Bigg(  \|e_N\|_2 + \sum_{i=0}^{N-1} \|\mathbb E_{t_i} [\mathcal E_i^{f,\lambda}]\|_2  + N\lambda|\pi|^{3/2} + \sum_{i=0}^{N-1} \sqrt{\varepsilon_i^{\mathcal N,v^\lambda}} \Bigg).
\end{equation*}

By definition of $e_N$, and the initial network $\mathcal{U}_N^\lambda$, and with using the Lipschitz continuity of $g$, and the standard estimate \eqref{platen} we have
\begin{equation}
\|e_N\|_2 \leq  C|\pi|^{1/2}.
\label{eq:terminal-L2}
\end{equation}

On the other hand, \eqref{eq:local-consistency} of Lemma~\ref{lemma:local-consistency} gives

\begin{equation*}
\sum_{i=0}^{N-1} \left\Vert\mathbb E_{t_i}\left[ \mathcal E_i^{f,\lambda} \right] \right\Vert^2_2  \leq C\left( |\pi|^2+\lambda^2|\pi|^2 \right).
\end{equation*}

Using Hölder's inequality and \(N=\frac{T}{|\pi|}\),
\begin{align*}
\sum_{i=0}^{N-1} \|\mathbb E_{t_i} [\mathcal E_i^{f,\lambda}]\|_2 &\leq \sqrt{N} \left( \sum_{i=0}^{N-1} \left\Vert\mathbb E_{t_i}\left[ \mathcal E_i^{f,\lambda} \right] \right\Vert^2_2 \right)^{1/2} \nonumber\\
	&\leq C|\pi|^{1/2}+ C\lambda|\pi|^{1/2}.
\end{align*}

By using these estimates we conclude
$$
\max_{0\leq i\leq N-1} \left\| Y_{t_i}^\lambda - \mathcal U_i^\lambda(X_{i}^\pi) \right\|_2 \leq C\left[ |\pi|^{1/2} +\lambda|\pi|^{1/2} + \sum_{i=0}^{N-1} \sqrt{\varepsilon_i^{\mathcal N,v^\lambda}} \right],
$$

which completes the proof.
\end{proof}

\begin{remark}
	\label{rem:Gobet-Wang-comparison}
	Under the regularity assumptions of Gobet and Wang \cite{gobet}, including the semimartingale structure of the obstacle, their Tanaka--local-time argument improves the penalization contribution of $\mathcal{E}_i^{\lambda,\varepsilon}$ from $O(\lambda^2|\pi|)$ to $O(\lambda^2|\pi|^{3/2})$. At the \(L^2\)-norm level, this corresponds to an accumulated error of order $O(\lambda|\pi|^{3/4})$ instead of \(O(\lambda|\pi|^{1/2})\).
	
	In the present DBDP-PBSDE framework, however, the forward diffusion is approximated by the Euler scheme. The corresponding spatial error satisfies
	$
	\|R_i^{X,\lambda}\|_2
	\leq
	C\lambda|\pi|^{3/2},
$
	and hence, after summation over the time steps,
	$
	\sum_{i=0}^{N-1}\|R_i^{X,\lambda}\|_2
	\leq
	C\lambda|\pi|^{1/2}.
	$
	This term dominates the sharper contribution
	\(\lambda|\pi|^{3/4}\) of $\mathcal{E}_i^{\lambda,\varepsilon}$. Consequently, in the general Euler setting, the global estimate remains
	\[
	\max_i
	\left\|
	Y_{t_i}^\lambda
	-
	\mathcal U_i^\lambda(X_i^\pi)
	\right\|_2
	\leq
	C\left[
	(1+\lambda)|\pi|^{1/2}
	+
	\sum_i
	\sqrt{\varepsilon_i^{\mathcal N,v^\lambda}}
	\right].
	\]
	
Thus, the loss with respect to the sharper Gobet--Wang rate is mainly due to the additional approximation of the forward diffusion. 

\end{remark}

\begin{remark}
\label{rem:Y-squared-error}
The estimate of Theorem~\ref{thrm:0} can equivalently be expressed in terms of the squared \(L^2\)-error. Indeed, using the inequality $(a+b)^2\leq  2a^2+2b^2,$  we obtain
\begin{equation}
	\max_{0\leq i\leq N-1} \mathbb E \left[ \left| Y_{t_i}^\lambda - \mathcal U_i^\lambda(X_{i}^\pi) \right|^2 \right]  \leq C\left[ |\pi| + \lambda^2|\pi| + \left( \sum_{i=0}^{N-1} \sqrt{\varepsilon_i^{\mathcal N,v^\lambda}} \right)^2 \right]. \label{eq:main-Y-error-squared}
\end{equation}

Furthermore, by the Hölder's inequality,
$$ \left( \sum_{i=0}^{N-1} \sqrt{\varepsilon_i^{\mathcal N,v^\lambda}} \right)^2 \leq N\sum_{i=0}^{N-1} \varepsilon_i^{\mathcal N,v^\lambda}.
$$

Hence \eqref{eq:main-Y-error-squared} implies the estimate
\begin{equation}
 \max_{0\leq i\leq N-1} \mathbb E \left[ \left| Y_{t_i}^\lambda - \mathcal U_i^\lambda(X_{i}^\pi) \right|^2 \right] \leq  C\left( |\pi| + \lambda^2|\pi| + N\sum_{i=0}^{N-1} \varepsilon_i^{\mathcal N,v^\lambda} \right). \label{eq:coarser-estimate}
\end{equation}

Thus the latter estimate is a direct consequence of the sharper \(L^2\)-norm estimate \eqref{eq:main-Y-error-L2}. In particular, the factor \(N\) does not arise from the error analysis itself; it appears only when the accumulated approximation error $ (\sum_i\sqrt{\varepsilon_i^{\mathcal N,v^\lambda}})^2 $ is further bounded by Hölder's inequality.
\end{remark}

We give now an estimate of the error for the reconstructed control process defined by \eqref{Zlearned}.
\begin{theorem}
\label{Z-error}
Assume \ref{HL}(\ref{HL_1}--\ref{HL_6}), and the assumptions of Theorem~\ref{thm:Z-regularity}. Let \(\lambda\geq1\) and suppose that $[f]|\pi|\leq\frac12$ and $\lambda|\pi|\leq C_0$ for some constant \(C_0>0\) independent of \(\lambda\) and $|\pi|$.
 Assume, as in Theorem~\ref{thrm:0}, that at each time step the minimization problem \eqref{minimisation_problem} admits a minimizer \(\theta_{i,\lambda}^*\in\Theta_m\). Then there exists a constant \(C>0\), independent of \(\lambda\), \(N\), and \(|\pi|\), such that
\begin{equation}
\mathbb E \left[ \sum_{i=0}^{N-1} \int_{t_i}^{t_{i+1}} |Z_s^\lambda-\mathcal Z_i^\lambda|^2\mathrm{d}s \right] \leq C\left( |\pi| + \lambda^2|\pi| + N\sum_{i=0}^{N-1} \varepsilon_i^{\mathcal N,v^\lambda} \right).
\label{eq:main-Z-error}
\end{equation}

\end{theorem}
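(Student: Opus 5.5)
Fix $i$ and split the error through the conditional time average $\bar Z_i^\lambda$ of \eqref{eq:Zbar}. By the orthogonality identity from Step 4 of the proof of Theorem~\ref{thm:Z-regularity},
\[
\mathbb E\Big[\int_{t_i}^{t_{i+1}}|Z_s^\lambda-\mathcal Z_i^\lambda|^2\mathrm ds\Big]
=\mathbb E\Big[\int_{t_i}^{t_{i+1}}|Z_s^\lambda-\bar Z_i^\lambda|^2\mathrm ds\Big]+\Delta t_i\,\mathbb E\big[|\bar Z_i^\lambda-\mathcal Z_i^\lambda|^2\big].
\]
Summing over $i$, the first part is $\varepsilon^{Z^\lambda}(\pi)$. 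Since $\lambda|\pi|\le C_0$, Theorem~\ref{thm:Z-regularity} bounds it by $C\lambda|\pi|\le C\lambda^2|\pi|$ (recall $\lambda\ge1$). It therefore remains to control $\sum_i\Delta t_i\|\bar Z_i^\lambda-\mathcal Z_i^\lambda\|_2^2$.

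\textbf{Representation of $\bar Z_i^\lambda$.} Multiply the exact increment
\[
Y^\lambda_{t_{i+1}}-Y^\lambda_{t_i}=-\int_{t_i}^{t_{i+1}}f^\lambda(s,X_s,Y^\lambda_s)\mathrm ds+\int_{t_i}^{t_{i+1}}Z^\lambda_s\mathrm dW_s
\]
by $\Delta W_i$ and take $\mathbb E_{t_i}$. Using It\^o's isometry, this gives
\[
\Delta t_i\bar Z_i^\lambda=\mathbb E_{t_i}\big[Y^\lambda_{t_{i+1}}\Delta W_i\big]+\mathbb E_{t_i}\Big[\Delta W_i\int_{t_i}^{t_{i+1}}f^\lambda(s,X_s,Y_s^\lambda)\mathrm ds\Big].
\]
In the last term, $f^\lambda(t_i,X_{t_i},Y^\lambda_{t_i})$ is $\mathcal F_{t_i}$-measurable and $\mathbb E_{t_i}[\Delta W_i]=0$, so it may be subtracted, and the term becomes $\mathbb E_{t_i}[\Delta W_i\,\mathcal E_i^{f,\lambda}]$. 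Subtracting \eqref{Zlearned} then yields
\[
\Delta t_i(\bar Z_i^\lambda-\mathcal Z_i^\lambda)=\mathbb E_{t_i}\big[(Y^\lambda_{t_{i+1}}-\mathcal U^\lambda_{i+1}(X^\pi_{i+1}))\Delta W_i\big]+\mathbb E_{t_i}\big[\Delta W_i\,\mathcal E_i^{f,\lambda}\big].
\]

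\textbf{Bounding the two terms.} For the second term, Cauchy--Schwarz gives $|\mathbb E_{t_i}[\Delta W_i\mathcal E_i^{f,\lambda}]|^2\le d\,\Delta t_i\,\mathbb E_{t_i}|\mathcal E_i^{f,\lambda}|^2$. Hence its contribution is at most $C\sum_i\mathbb E|\mathcal E_i^{f,\lambda}|^2/\Delta t_i$. Exactly as in Steps 1--2 of Lemma~\ref{lemma:local-consistency} (before conditional Jensen is applied), this sum is at most $C(|\pi|+\lambda^2|\pi|)$.

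For the first term, I use the conditional variance trick to avoid losing a factor $1/\Delta t_i$. Since $\mathbb E_{t_i}[\Delta W_i]=0$, write $e_{i+1}:=Y^\lambda_{t_{i+1}}-\mathcal U^\lambda_{i+1}(X^\pi_{i+1})$. Then
\[
|\mathbb E_{t_i}[e_{i+1}\Delta W_i]|^2\le d\,\Delta t_i\big(\mathbb E_{t_i}|e_{i+1}|^2-|\mathbb E_{t_i}e_{i+1}|^2\big).
\]
The contribution is therefore $C\sum_i\big(\mathbb E|e_{i+1}|^2-\mathbb E|\mathbb E_{t_i}e_{i+1}|^2\big)$. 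To turn this into a telescoping sum, I revisit the recursion from the proof of Theorem~\ref{thrm:0} in squared form. Step 1 there, with Young's inequality $(a+b)^2\le(1+\Delta t)a^2+(1+1/\Delta t)b^2$, gives
\[
\|e_i\|_2^2\le(1+C\Delta t_i)\|\mathbb E_{t_i}e_{i+1}\|_2^2+\frac{C}{\Delta t_i}\Big(\|\mathbb E_{t_i}\mathcal E_i^{f,\lambda}\|_2^2+\|R_i^{X,\lambda}\|_2^2+\varepsilon_i^{\mathcal N,v^\lambda}\Big).
\]
Rearranging, $\mathbb E|e_{i+1}|^2-\mathbb E|\mathbb E_{t_i}e_{i+1}|^2\le\|e_{i+1}\|^2-\|e_i\|^2+C\Delta t_i\|e_{i+1}\|^2+\frac{C}{\Delta t_i}(\cdots)$. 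Summing over $i$, the sum telescopes to $\|e_N\|^2$ plus $C\max_i\|e_i\|^2$ plus the remainders. The remainders are controlled as follows: Lemma~\ref{lemma:local-consistency} gives $C(|\pi|+\lambda^2|\pi|)$; the estimate $\|R_i^{X,\lambda}\|_2\le C\lambda|\pi|^{3/2}$ gives $\sum_i\lambda^2|\pi|^3/|\pi|\le C\lambda^2|\pi|$; and the network errors give $N\sum_i\varepsilon_i^{\mathcal N,v^\lambda}$. Finally, \eqref{eq:terminal-L2} and \eqref{eq:coarser-estimate} bound $\|e_N\|^2+\max_i\|e_i\|^2$ by the right-hand side of \eqref{eq:main-Z-error}.

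\textbf{Main obstacle.} The delicate step is this telescoping. A naive bound $\|\mathbb E_{t_i}[e_{i+1}\Delta W_i]\|^2/\Delta t_i\le C\|e_{i+1}\|^2$, summed over $N$ steps, loses a factor $N$. The squared recursion must therefore be set up carefully, with $(1+C\Delta t)$ factors rather than $(1+\lambda\Delta t)$ ones. The monotonicity of the penalty used in Lemma~\ref{lemma:inv} is what ensures $\mathcal A_i^{-1}$ has Lipschitz constant $1+C\Delta t_i$ independent of $\lambda$.
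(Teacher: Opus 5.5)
Your proof is correct and follows the paper's argument. The split through $\bar Z_i^\lambda$, the representation of $\Delta t_i(\bar Z_i^\lambda-\mathcal Z_i^\lambda)$ from the BSDE increment, the conditional-variance bound for $\mathbb E_{t_i}[e_{i+1}\Delta W_i]$, and the telescoping of the squared recursion from Theorem~\ref{thrm:0}, closed with \eqref{eq:terminal-L2} and \eqref{eq:coarser-estimate}, are exactly the paper's steps. The only differences are cosmetic: you use the orthogonality identity instead of the factor-$2$ split, and you centre the drift term at $f^\lambda(t_i,X_{t_i},Y^\lambda_{t_i})$ and reuse the pre-Jensen bounds of Lemma~\ref{lemma:local-consistency}, whereas the paper simply uses $\mathbb E\int_0^T|f^\lambda(s,X_s,Y^\lambda_s)|^2\,\mathrm ds\le C\lambda^2$. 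Both give at most $C\lambda^2|\pi|$, and your count there actually carries a superfluous but harmless factor $1/\Delta t_i$.
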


\begin{proof}
Throughout the proof, \(C>0\) denotes a generic constant which may change from line to line, but is independent of \(\lambda\), \(N\), and \(|\pi|\).

Let \(i=0,\ldots,N-1\), we have
\begin{align}
\mathbb E \left[\int_{t_i}^{t_{i+1}} |Z_s^\lambda-\mathcal Z_i^\lambda|^2\mathrm{d}s\right]  \leq 2\mathbb E \left[\int_{t_i}^{t_{i+1}} |Z_s^\lambda-\bar Z_{i}^\lambda|^2\mathrm{d}s\right] + 2\Delta t_i\, \mathbb E \left[|\bar Z_{i}^\lambda-\mathcal Z_i^\lambda|^2\right],
\label{eq:Z-decomposition}
\end{align}
where \(\bar Z_{i}^\lambda\) is defined in \eqref{eq:Zbar}.

\textit{Step 1. Estimate of \(\bar Z_{i}^\lambda-\mathcal Z_i^\lambda\).} By It\^o isometry
\begin{equation*}
\Delta t_i\,\bar Z_{i}^\lambda = \mathbb E_{t_i} \left[ \left( \int_{t_i}^{t_{i+1}}Z_s^\lambda\,dW_s \right) \Delta W_{i} \right].
\end{equation*}

On the other hand, by \eqref{Zlearned}, we have
\begin{equation*}
\Delta t_i\,\mathcal Z_i^\lambda = \mathbb E_{t_i} \left[ \mathcal U_{i+1}^\lambda(X_{i+1}^\pi) \Delta W_{i} \right].
\end{equation*}
	
From the PBSDE \eqref{PBSDE}, we have
\begin{equation*}
\int_{t_i}^{t_{i+1}}Z_s^\lambda\,\mathrm{d}W_s=Y_{t_{i+1}}^\lambda-Y_{t_i}^\lambda+ \int_{t_i}^{t_{i+1}} f^\lambda(s,X_s,Y_s^\lambda)\mathrm{d}s.
\end{equation*}

As in proof of Theorem \ref{thrm:0}, we denote \( e_i = Y_{t_i}^\lambda - \mathcal U_i^\lambda(X_{i}^\pi).\) Since \(Y_{t_i}^\lambda\) is \(\mathcal F_{t_i}\)-measurable and \(\mathbb E_{t_i}[\Delta W_{t_i}]=0\), it follows that

\begin{equation*}
\Delta t_i (\bar Z_{i}^\lambda-\mathcal Z_i^\lambda) =  \mathbb E_{t_i} [e_{i+1}\Delta W_{i}]  + \mathbb E_{t_i} \left[ \left( \int_{t_i}^{t_{i+1}} f^\lambda(s,X_s,Y_s^\lambda)\mathrm{d}s \right) \Delta W_{i} \right].
\end{equation*}
	
Because
\[ \mathbb E_{t_i}[e_{i+1}\Delta W_{i}] = \mathbb E_{t_i} \left[ \left( e_{i+1}-\mathbb E_{t_i}[e_{i+1}] \right) \Delta W_{i} \right], \]

Hölder's inequality, applied component-wise, yields
\begin{equation*}
\mathbb E\left[\left| \mathbb E_{t_i} [e_{i+1}\Delta W_{i}] \right|^2\right] \leq d\Delta t_i \left( \mathbb E\left[|e_{i+1}|^2\right] - \mathbb E\left[ |\mathbb E_{t_i}[e_{i+1}]|^2\right] \right).
\end{equation*}

Similarly,
\begin{equation*}
\mathbb E\left[\left| \mathbb E_{t_i} \left[ \left( \int_{t_i}^{t_{i+1}} f^\lambda(s,X_s,Y_s^\lambda)\mathrm{d}s \right) \Delta W_{i} \right] \right|^2\right] \leq d(\Delta t_i)^2 \mathbb E\left[ \int_{t_i}^{t_{i+1}} |f^\lambda(s,X_s,Y_s^\lambda)|^2\mathrm{d}s\right].
\end{equation*}

Consequently,
\begin{align*}
\Delta t_i\, \mathbb E\left[|\bar Z_{i}^\lambda-\mathcal Z_i^\lambda|^2\right] \leq{}& C \left( \mathbb E\left[|e_{i+1}|^2\right] - \mathbb E\left[ |\mathbb E_{t_i}[e_{i+1}]|^2\right] \right) + C\Delta t_i\, \mathbb E\left[ \int_{t_i}^{t_{i+1}} |f^\lambda(s,X_s,Y_s^\lambda)|^2\mathrm{d}s\right].
\end{align*}
	
Summing \eqref{eq:Z-decomposition} over \(i\) gives
\begin{align}
\mathbb E \left[ \sum_{i=0}^{N-1} \int_{t_i}^{t_{i+1}} |Z_s^\lambda-\mathcal Z_i^\lambda|^2\mathrm{d}s \right] &\leq  \sum_{i=0}^{N-1} \left( \mathbb E\left[|e_{i+1}|^2\right] - \mathbb E \left[|\mathbb E_{t_i}[e_{i+1}]|^2\right] \right) \nonumber\\
	&\qquad+ C\varepsilon^{Z^\lambda}(\pi) + C\Delta t_i\, \mathbb E\left[ \int_0^T |f^\lambda(s,X_s,Y_s^\lambda)|^2\mathrm{d}s  \right].
\label{eq:Z-global-first}
\end{align}

\textit{Step 2.} We now treat the first term in right side of the above inequality. Let $i\in\{0,\ldots,N-1\}$. By using the Young's inequality
$$(a+b)^2\leq  (1+|\pi|)a^2 + \left(1+\frac{1}{|\pi|}\right)b^2 ,$$

with inequality \eqref{eq:YV-L2-conditional} we have
\begin{align*}
\mathbb{E}\left[\left|Y_{t_i}^\lambda-V_i^\lambda\right|^2\right]& \leq  (1+C|\pi|)\mathbb{E}\left[\left|\mathbb E_{t_i}[e_{i+1}]\right|^2\right] + \frac{C}{|\pi|}\mathbb{E}\left[\left|\mathbb{E}_{t_i}\left[\mathcal{E}_i^{f,\lambda}\right]\right|^2\right]  +\frac{C}{|\pi|}\lambda^2|\pi|^3\\
	&\leq  (1+C|\pi|)\mathbb{E}\left[\left|\mathbb E_{t_i}[e_{i+1}]\right|^2\right] + \frac{C}{|\pi|}\mathbb{E}\left[\left|\mathbb{E}_{t_i}\left[\mathcal{E}_i^{f,\lambda}\right]\right|^2\right] + C\lambda^2|\pi|^2.
\end{align*}

On the other hand, by subtracting and adding $V_i^\lambda$ in the expression of $e_i$ and using Young's inequality we obtain (up to a change of the generic constant \(C\))
\begin{align*}
\mathbb{E}\left[|e_i|^2\right] &\leq  (1+C|\pi|) \mathbb{E}\left[\left|Y_{t_i}^\lambda-V_i^\lambda\right|^2\right] + \left(1+\frac{1}{|\pi|}\right)\mathbb{E}\left[\left|V_i^\lambda - \mathcal{U}_i^\lambda(X_{i}^\pi)\right|^2\right]\\
	&\leq  (1+C|\pi|)\mathbb{E}\left[\left|\mathbb E_{t_i}[e_{i+1}]\right|^2\right] + \frac{C}{|\pi|}\mathbb{E}\left[\left|\mathbb{E}_{t_i}\left[\mathcal{E}_i^{f,\lambda}\right]\right|^2\right] + C\lambda^2|\pi|^2+\frac{C}{|\pi|}\mathbb{E}\left[\left|V_i^\lambda - \mathcal{U}_i^\lambda(X_{i}^\pi)\right|^2\right],
\end{align*}

it follows that
\begin{align}
\mathbb{E}\left[|e_i|^2\right] - \mathbb{E}\left[\left|\mathbb E_{t_i}[e_{i+1}]\right|^2\right]  &\leq  C|\pi|\mathbb{E}\left[\left|\mathbb E_{t_i}[e_{i+1}]\right|^2\right] + \frac{C}{|\pi|}\mathbb{E}\left[\left|\mathbb{E}_{t_i}\left[\mathcal{E}_i^{f,\lambda}\right]\right|^2\right] + C\lambda^2|\pi|^2 \nonumber \\
	&\qquad +\frac{C}{|\pi|}\mathbb{E}\left[\left|V_i^\lambda - \mathcal{U}_i^\lambda(X_{i}^\pi)\right|^2\right]. \label{inq:sum-e_i}
\end{align}

Now by changing indices in the sum of the right hand side of inequality \eqref{eq:Z-global-first}, we obtain
\begin{equation*}
\sum_{i=0}^{N-1} \left( \mathbb E\left[|e_{i+1}|^2\right] - \mathbb E \left[|\mathbb E_{t_i}[e_{i+1}]|^2\right] \right) \leq  \mathbb{E}\left[|e_N|^2\right] + \sum_{i=0}^{N-1} \left( \mathbb E\left[|e_{i}|^2\right] - \mathbb E \left[|\mathbb E_{t_i}[e_{i+1}]|^2\right] \right),
\end{equation*}

by using \eqref{eq:terminal-L2} and \eqref{inq:sum-e_i}, we obtain

\begin{align*}
\sum_{i=0}^{N-1} \left( \mathbb E\left[|e_{i+1}|^2\right] - \mathbb E \left[|\mathbb E_{t_i}[e_{i+1}]|^2\right] \right) &\leq  C \Big(\max_{0\leq  i\leq  N-1}\mathbb{E}\left[\left|\mathbb E_{t_i}[e_{i+1}]\right|^2\right]  + \frac{1}{|\pi|}\sum_{i=0}^{N-1}\mathbb{E}\left[\left|\mathbb{E}_{t_i}\left[\mathcal{E}_i^{f,\lambda}\right]\right|^2\right]\\
	&\qquad + |\pi|+\lambda^2|\pi| + N\sum_{i=0}^{N-1} \mathbb{E}\left[\left|V_i^\lambda - \mathcal{U}_i^\lambda(X_{i}^\pi)\right|^2\right] \Big).
\end{align*}

By using Jensen's inequality for the first term, \eqref{eq:local-consistency} of Lemma \ref{lemma:local-consistency} for the second term, and \eqref{eq:V_U-NN} for the last term, we obtain
\begin{equation*}
\sum_{i=0}^{N-1} \left( \mathbb E\left[|e_{i+1}|^2\right] - \mathbb E \left[|\mathbb E_{t_i}[e_{i+1}]|^2\right] \right) \leq  C \Big(\max_{0\leq  i\leq  N-1}\mathbb{E}\left[\left|e_{i+1}\right|^2\right] + |\pi|+\lambda^2|\pi| + N\sum_{i=0}^{N-1}\varepsilon_i^{\mathcal{N},v^\lambda} \Big),
\end{equation*}

which implies by using \eqref{eq:coarser-estimate}

\begin{align*}
\sum_{i=0}^{N-1} \left( \mathbb E\left[|e_{i+1}|^2\right] - \mathbb E \left[|\mathbb E_{t_i}[e_{i+1}]|^2\right] \right) &\leq  C \Big(|\pi|+\lambda^2|\pi| + N\sum_{i=0}^{N-1}\varepsilon_i^{\mathcal{N},v^\lambda} \Big).
\end{align*}

\textit{Step 3. }By plugging the above inequality into \eqref{eq:Z-global-first} we obtain
\begin{equation*}
\mathbb E \left[ \sum_{i=0}^{N-1} \int_{t_i}^{t_{i+1}} |Z_s^\lambda-\mathcal Z_i^\lambda|^2\mathrm{d}s \right] \leq  C\left( |\pi|+\lambda^2|\pi| + N\sum_{i=0}^{N-1}\varepsilon_i^{\mathcal{N},v^\lambda} + \varepsilon^{Z^\lambda}(\pi)  + |\pi|\, \mathbb E\left[ \int_0^T |f^\lambda(s,X_s,Y_s^\lambda)|^2\mathrm{d}s  \right] \right).
\end{equation*}

For the last term, by definition of $f^\lambda$ and the Lipschitz property of $f$ and $r\mapsto r^-$, together with the uniform bounds \eqref{eq:uniform-penalization} and \eqref{Xmoment}, we have
$$ \mathbb E\left[ \int_0^T |f^\lambda(s,X_s,Y_s^\lambda)|^2\mathrm{d}s  \right] \leq  C\lambda^2. $$

Finally, by using Theorem \ref{thm:Z-regularity}, and with $\lambda\geq 1$ we obtain
\begin{equation*}
\mathbb E \left[ \sum_{i=0}^{N-1} \int_{t_i}^{t_{i+1}} |Z_s^\lambda-\mathcal Z_i^\lambda|^2\mathrm{d}s \right] \leq  C\left( |\pi| +\lambda^2|\pi| + N\sum_{i=0}^{N-1}\varepsilon_i^{\mathcal{N},v^\lambda} \right),
\end{equation*}

which completes the proof.
\end{proof}

\subsection{Global error estimates for the reflected BSDE}

We now combine the error estimate for \(Y\) approximation of
the penalized BSDE \eqref{PBSDE} with the penalization error in order to derive a global error estimate for the value component of the original
reflected BSDE. As an additional result, we also derive a global error
estimate for the control process \eqref{Zlearned}, reconstructed from the learned value functions.

We consider the following additional assumptions on the barrier:
\begin{enumerate}
\mylabel{H\textsubscript{B}}\label{HB} The barrier $h(\cdot,X_\cdot)$ admits the following general semi martingale representation
	$$ h(t,X_t) = h(0,X_0) + \int_0^tU^h_s\mathrm{d}s + \int_0^tV^h_s\mathrm{d}W_s + A^h_t, \qquad 0\leq t\leq T,$$
	where $U^h$ and $V^h$ are two continuous adapted stochastic processes, and $A^h$ is a continuous adapted increasing process such that 
	$$\mathbb{E}\left[\sup_{0\leq  t\leq  T}|U^h_t|^2\right] + \mathbb{E}\left[\sup_{0\leq  t\leq  T}|V^h_t|^2\right] + \mathbb{E}\left[|A^h_T|^2\right]<+\infty .$$
\end{enumerate}

Then from Theorem 3.2 in \cite{gobet}, under assumptions \ref{HL} and \ref{HB}, we have the following estimate
\begin{equation}
	\label{eq:penalization-errors}
	\mathbb E\left[
	\sup_{0\leq t\leq T}|Y_t-Y_t^\lambda|^2
	\right]
	+
	\mathbb E\left[
	\int_0^T|Z_s-Z_s^\lambda|^2\mathrm{d}s
	\right]
	\leq
	\frac{C}{\lambda}.
\end{equation}

Combining \eqref{eq:penalization-errors} with \eqref{eq:coarser-estimate} we obtain
\begin{equation}
	\label{eq:Y-total-before-lambda}
	\max_{0\leq i\leq N-1}
	\mathbb E\left[
	\left|
	Y_{t_i}
	-
	\mathcal U_i^\lambda(X_{i}^\pi)
	\right|^2
	\right]
	\leq
	C\left(
	\frac{1}{\lambda}
	+
	|\pi|
	+
	\lambda^2|\pi|
	+
	N\sum_{i=0}^{N-1}
	\varepsilon_i^{\mathcal N,v^\lambda}
	\right).
\end{equation}

Similarly, \eqref{eq:penalization-errors} and \eqref{eq:main-Z-error} imply
\begin{equation}
	\label{eq:Z-total-before-lambda}
	\mathbb E\left[
	\sum_{i=0}^{N-1}
	\int_{t_i}^{t_{i+1}}
	\left|
	Z_s-\mathcal Z_i^\lambda
	\right|^2\mathrm{d}s
	\right]
	\leq
	C\left(
	\frac{1}{\lambda}
	+
	|\pi|
	+
	\lambda^2|\pi|
	+
	N\sum_{i=0}^{N-1}
	\varepsilon_i^{\mathcal N,v^\lambda}
	\right).
\end{equation}

Leaving aside for the moment the neural-network approximation term,
the two competing \(\lambda\)-dependent contributions in
\eqref{eq:Y-total-before-lambda} and \eqref{eq:Z-total-before-lambda} are $ \frac{1}{\lambda}$ and $\lambda^2|\pi|. $
Balancing these terms gives the optimal choice
\begin{equation*}
	\lambda
	=
	c_0|\pi|^{-1/3},
\end{equation*}
where \(c_0>0\) is fixed independently of \(|\pi|\). Notice that this
choice balances only the penalization and time-discretization
contributions. In general, the neural-network approximation error
\(\varepsilon_i^{\mathcal N,v^\lambda}\) also depends on \(\lambda\).

\begin{corollary}
	\label{coro:convergence-NN}
	Assume that the hypotheses of Theorem~\ref{thrm:0}, together with
	the assumptions ensuring the penalization estimate
	\eqref{eq:penalization-errors}, are satisfied. Let $
	\lambda=c_0|\pi|^{-1/3}$, where \(c_0>0\) is fixed independently of \(|\pi|\). Then, for \(|\pi|\) sufficiently small, the DBDP-PBSDE approximation satisfies
	\begin{equation}
		\label{global-error-Y}
		\max_{0\leq i\leq N-1}
		\mathbb E\left[
		\left|
		Y_{t_i}
		-
		\mathcal U_i^\lambda(X_{i}^\pi)
		\right|^2
		\right]
		\leq
		C\left(
		|\pi|^{1/3}
		+
		N\sum_{i=0}^{N-1}
		\varepsilon_i^{\mathcal N,v^\lambda}
		\right).
	\end{equation}
	
	If, in addition, the assumptions of Theorem~\ref{Z-error} are
	satisfied, the reconstructed control process \eqref{Zlearned} satisfies
	\begin{equation}
		\label{global-error-Z}
		\mathbb E\left[
		\sum_{i=0}^{N-1}
		\int_{t_i}^{t_{i+1}}
		\left|
		Z_s-\mathcal Z_i^\lambda
		\right|^2\mathrm{d}s
		\right]
		\leq
		C\left(
		|\pi|^{1/3}
		+
		N\sum_{i=0}^{N-1}
		\varepsilon_i^{\mathcal N,v^\lambda}
		\right),
	\end{equation}
	for some constant \(C>0\) independent of \(N\), \(|\pi|\), and \(\lambda\).
\end{corollary}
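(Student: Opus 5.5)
The plan is to substitute $\lambda=c_0|\pi|^{-1/3}$ into the pre-balanced estimates \eqref{eq:Y-total-before-lambda} and \eqref{eq:Z-total-before-lambda}. First I check that this choice is admissible for Theorem~\ref{thrm:0} and Theorem~\ref{Z-error}.

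\textbf{Admissibility.} The theorems require $\lambda\geq1$, $[f]|\pi|\leq\frac12$ and $\lambda|\pi|\leq C_0$.
\begin{itemize}
\item Since $\lambda|\pi| = c_0|\pi|^{2/3}\leq c_0T^{2/3}$, we may take $C_0:=c_0T^{2/3}$, which does not depend on $|\pi|$.
\item For $|\pi|\leq\min\{c_0^3,(2[f])^{-1}\}$ we have $\lambda\geq1$ and $[f]|\pi|\leq\frac12$. This is the meaning of ``$|\pi|$ sufficiently small''.
\end{itemize}
The constants $C$ in \eqref{eq:coarser-estimate} and \eqref{eq:main-Z-error} are independent of $\lambda$, $N$ and $|\pi|$, though they may depend on $C_0$. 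The constant in \eqref{eq:penalization-errors} is independent of $\lambda$. Hence the constants in \eqref{eq:Y-total-before-lambda} and \eqref{eq:Z-total-before-lambda} are uniform as well.

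\textbf{Combining the estimates for $Y$.} Write
\[
Y_{t_i}-\mathcal U_i^\lambda(X_i^\pi) = (Y_{t_i}-Y_{t_i}^\lambda) + (Y_{t_i}^\lambda-\mathcal U_i^\lambda(X_i^\pi)).
\]
Apply $(a+b)^2\leq 2a^2+2b^2$, bound $\mathbb E|Y_{t_i}-Y^\lambda_{t_i}|^2$ by the supremum in \eqref{eq:penalization-errors}, and bound the second term by \eqref{eq:coarser-estimate}. This gives \eqref{eq:Y-total-before-lambda}.

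\textbf{Combining the estimates for $Z$.} Split $Z_s-\mathcal Z_i^\lambda=(Z_s-Z_s^\lambda)+(Z^\lambda_s-\mathcal Z_i^\lambda)$. Bound the first part by the $Z$-part of \eqref{eq:penalization-errors} and the second by \eqref{eq:main-Z-error}. This gives \eqref{eq:Z-total-before-lambda}.

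\textbf{Substitution.} With $\lambda=c_0|\pi|^{-1/3}$:
\[
\frac1\lambda=c_0^{-1}|\pi|^{1/3},\qquad \lambda^2|\pi|=c_0^2|\pi|^{1/3},\qquad |\pi|\leq T^{2/3}|\pi|^{1/3}.
\]
Collecting terms yields \eqref{global-error-Y} and \eqref{global-error-Z}, with a constant depending only on $c_0$, $T$ and the data.

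\textbf{Main obstacle.} The delicate point is keeping the constants uniform. Theorem~\ref{thrm:0} needs $\lambda|\pi|\leq C_0$, and its constant depends on $C_0$ through the bound $[1+([f]+\lambda)\Delta t_i]^2\leq C$. The chosen scaling $\lambda|\pi|\to0$ keeps this bounded uniformly in $|\pi|$, so the argument goes through. The network term $N\sum_i\varepsilon_i^{\mathcal N,v^\lambda}$ is kept as stated; no rate is claimed for it here.
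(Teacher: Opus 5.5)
Your proposal is correct and follows the paper's own argument. The paper substitutes $\lambda=c_0|\pi|^{-1/3}$ into \eqref{eq:Y-total-before-lambda} and \eqref{eq:Z-total-before-lambda}, bounds $\frac1\lambda+|\pi|+\lambda^2|\pi|$ by a multiple of $|\pi|^{1/3}$, and checks that $\lambda|\pi|=c_0|\pi|^{2/3}$ and $[f]|\pi|$ are small enough for Theorems~\ref{thrm:0} and~\ref{Z-error} to apply; you additionally make explicit that $\lambda\geq1$ once $|\pi|\leq c_0^3$ and that one may take $C_0=c_0T^{2/3}$, details the paper leaves implicit.
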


\begin{proof}
By substituting $ \lambda=c_0|\pi|^{-1/3}$ into \eqref{eq:Y-total-before-lambda}, we obtain
$$ \frac{1}{\lambda} + |\pi| + \lambda^2|\pi| \leq  c_0^{-1}|\pi|^\frac{1}{3} + |\pi|^\frac{1}{3} + c_0^2 |\pi|^\frac{1}{3} $$

Consequently,
	\[
	\max_{0\leq i\leq N-1}
	\mathbb E\left[
	\left|
	Y_{t_i}
	-
	\mathcal U_i^\lambda(X_{i}^\pi)
	\right|^2
	\right]
	\leq
	C\left(
	|\pi|^{1/3}
	+
	N\sum_{i=0}^{N-1}
	\varepsilon_i^{\mathcal N,v^\lambda}
	\right),
	\]
	which proves \eqref{global-error-Y}.
	
	If the assumptions of Theorem~\ref{Z-error} are also satisfied,
	substituting the same choice of \(\lambda\) into
	\eqref{eq:Z-total-before-lambda} gives
	\[
	\mathbb E\left[
	\sum_{i=0}^{N-1}
	\int_{t_i}^{t_{i+1}}
	\left|
	Z_s-\mathcal Z_i^\lambda
	\right|^2\mathrm{d}s
	\right]
	\leq
	C\left(
	|\pi|^{1/3}
	+
	N\sum_{i=0}^{N-1}
	\varepsilon_i^{\mathcal N,v^\lambda}
	\right),
	\]
	which proves \eqref{global-error-Z}.\\
	
It remains to verify the step-size conditions. We have $\lambda|\pi| = c_0|\pi|^{2/3} \longrightarrow0$ as $|\pi|\to0$. Thus, $ \lambda|\pi|\leq C_0 $ for \(|\pi|\) sufficiently small. Likewise, $ [f]|\pi|\leq\frac12 $ for \(|\pi|\) sufficiently small. Hence all the step-size conditions required in Theorems~\ref{thrm:0} and~\ref{Z-error} are satisfied.
\end{proof}

\subsection{Rate of convergence}
Let's introduce the flow of the Euler scheme. For $x\in\mathbb{R}^d$ we define
$$ X_{j+1}^{\pi,k,x} = X_j^{\pi,k,x} +\mu(t_j,X_j^{\pi,k,x})\Delta t_j +  \sigma(t_j,X_j^{\pi,k,x})\Delta W_j \;\; , \;\; j\in\{k,\cdots , N\} .$$

and $X_k^{\pi,k,x} = x$ at $j=k\in\mathbb{N}^*$.\\

From Lemma 4.3.1 in \cite{germain}, under \ref{HL}(\ref{HL_1}) we have for every $x,x'\in\mathbb{R}^d$
\begin{equation}
\mathbb{E}\left[\left|X_{k+1}^{\pi,k,x} - X_{k+1}^{\pi,k,x'}\right|^2\right] \leq  (1+C\Delta t_i)|x-x'|^2.
\label{inq:estimate-flow}
\end{equation}

From Lemma~\ref{lemma-v}, for every \(i=0,\ldots,N-1\), $V_i^\lambda = v_i^\lambda(X_i^\pi)$,
where
\[
V_i^\lambda
=
\mathbb E_{t_i}
\left[
\mathcal U_{i+1}^\lambda(X_{i+1}^\pi)
\right]
+
f^\lambda(t_i,X_i^\pi,V_i^\lambda)\Delta t_i.
\]
In terms of the Euler flow, this yields
\begin{equation}
v_i^\lambda(x) = \mathbb E \left[ \mathcal U_{i+1}^\lambda
(X_{i+1}^{\pi,i,x}) \right] + f^\lambda (t_i,x,v_i^\lambda(x)) \Delta t_i.
\label{eq:def-vi-flow}
\end{equation}

\begin{lemma}
	\label{lemma:lip-v}
	Assume \ref{HL} and suppose that $[f]|\pi|\leq\frac{1}{2}.$
	Then there exists a family of positive constants
	\((K_i)_{0\leq i\leq N}\), independent of \(\lambda\), and a
	constant \(K>0\), independent of \(i\), \(N\), and \(\lambda\), satisfying
	\[
	\sup_{0\leq i\leq N}K_i\leq K,
	\]
	such that, the admissible neural-network class at time \(t_i\) can be
	chosen as $ \mathcal G_{K_i,d,\ell,m}^{\rho^\kappa}$,
	and
	\begin{equation*}
		[v_i^\lambda]\leq K_i\leq K,
		\qquad
		[\mathcal U_i^\lambda]\leq K_i\leq K,
		\qquad
		i=0,\ldots,N-1.
	\end{equation*}

\end{lemma}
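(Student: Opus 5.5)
We argue by backward induction on $i$, propagating a Lipschitz bound through one step of the implicit scheme \eqref{eq:def-vi-flow}. We use the monotonicity of the penalty in $y$ so that the factor $\lambda$ never multiplies the Lipschitz constant. Set $K_N:=[g]$, so that $[\mathcal U_N^\lambda]=[g]\leq K_N$. Suppose $[\mathcal U_{i+1}^\lambda]\leq K_{i+1}$. We shall prove a recursive bound of the form
\[
[v_i^\lambda]\leq (1+C\Delta t_i)K_{i+1}+C\Delta t_i ,
\]
with $C$ depending only on $[f],[h],[\mu],[\sigma]$, and then define $K_i:=(1+C\Delta t_i)K_{i+1}+C\Delta t_i$. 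Since the networks in $\mathcal G_{K_i,d,\ell,m}^{\rho^\kappa}$ are $K_i$-Lipschitz by construction, choosing this class at time $t_i$ gives $[\mathcal U_i^\lambda]\leq K_i$, which closes the induction. A discrete Grönwall argument then yields $K_i\leq e^{CT}([g]+CT)=:K$, independent of $i$, $N$ and $\lambda$.

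For the one-step estimate, fix $x,x'$ and write $\delta v=v_i^\lambda(x)-v_i^\lambda(x')$. Set $\delta\psi=\mathbb E[\mathcal U_{i+1}^\lambda(X_{i+1}^{\pi,i,x})-\mathcal U_{i+1}^\lambda(X_{i+1}^{\pi,i,x'})]$. By Jensen's inequality and \eqref{inq:estimate-flow}, $|\delta\psi|\leq K_{i+1}(1+C\Delta t_i)^{1/2}|x-x'|\leq K_{i+1}(1+C\Delta t_i)|x-x'|$. Subtracting the two copies of \eqref{eq:def-vi-flow} gives
\[
\delta v=\delta\psi+\Delta t_i\big[f(t_i,x,v)-f(t_i,x',v')\big]+\lambda\Delta t_i\big[(v-h(t_i,x))^--(v'-h(t_i,x'))^-\big],
\]
where $v=v_i^\lambda(x)$ and $v'=v_i^\lambda(x')$. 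As in the proof of Proposition~\ref{thm:lip}, we linearize the penalty difference as $\alpha(\delta v-\delta h)$ with $-\lambda\leq\alpha\leq 0$ and $\delta h=h(t_i,x)-h(t_i,x')$. We linearize the $f$ difference as $\beta\,\delta v+\rho$ with $|\beta|\leq[f]$ and $|\rho|\leq[f]|x-x'|$. Solving for $\delta v$ gives
\[
\delta v=\frac{\delta\psi+\Delta t_i\rho-\alpha\Delta t_i\,\delta h}{1-\beta\Delta t_i-\alpha\Delta t_i}.
\]

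The main obstacle is the term $-\alpha\Delta t_i\,\delta h$, which can be as large as $\lambda\Delta t_i[h]|x-x'|$. This is not small uniformly in $\lambda$, unless one invokes $\lambda|\pi|\leq C_0$, and even then it would only give a factor $(1+C_0)$ per step, which blows up exponentially over $N$ steps. The way around it is to treat the quotient as a convex combination rather than bounding the terms separately. Write $a:=-\alpha\Delta t_i\geq0$ and $b:=1-\beta\Delta t_i\in[1-[f]\Delta t_i,1+[f]\Delta t_i]$. Then
\[
|\delta v|\leq \frac{b}{b+a}\cdot\frac{|\delta\psi|+\Delta t_i|\rho|}{b}+\frac{a}{b+a}\,[h]|x-x'| .
\]
This is a weighted average of $(1+C\Delta t_i)K_{i+1}|x-x'|+C\Delta t_i|x-x'|$ and $[h]|x-x'|$, so
\[
[v_i^\lambda]\leq\max\big\{(1+C\Delta t_i)K_{i+1}+C\Delta t_i,\ [h]\big\}.
\]

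It then suffices to define $K_i$ as this maximum. Because $[h]\leq[g]$ need not hold, we instead start from $K_N:=\max([g],[h])$. The maximum with $[h]$ is then harmless, since the first entry dominates $[h]$ as soon as $K_{i+1}\geq[h]$, and this holds inductively. The recursion is therefore $K_i=(1+C\Delta t_i)K_{i+1}+C\Delta t_i$. It gives $\sup_iK_i\leq e^{CT}(\max([g],[h])+CT)$, which is independent of $\lambda$ and $N$, and the argument requires only $[f]|\pi|\leq\frac12$.
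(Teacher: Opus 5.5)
Your proposal is correct and follows the paper's proof essentially step by step. It uses the same backward induction with $K_N=\max\{[g],[h]\}$ and the same linearization of the $f$- and penalty differences with $-\lambda\le\alpha\le 0$. Your convex-combination step is exactly the paper's elementary inequality $\frac{A+qB}{a+q}\le\max\{A/a,B\}$, which gives the $\lambda$-free recursion $K_i\le(1+C\Delta t_i)K_{i+1}+C\Delta t_i$ and the bound $K\le(K_N+CT)e^{CT}$. The one inconsistency is that your opening sets $K_N:=[g]$ and announces the recursion without the maximum with $[h]$; you correct this later, but a final write-up should start from $K_N=\max\{[g],[h]\}$.
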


\begin{proof}
We proceed backward in time. At the terminal time, $\mathcal U_N^\lambda=g$. Set $K_N:=\max\{[g],[h]\}$, then
\[ [\mathcal U_N^\lambda]=[g]\leq K_N. \]

Suppose now that, for some \(i\in\{0,\ldots,N-1\}\),
	\[
	\mathcal U_{i+1}^\lambda
	\in
	\mathcal G_{K_{i+1},d,\ell,m}^{\rho^\kappa},
	\]
	so that
	\begin{equation*}
		[\mathcal U_{i+1}^\lambda]\leq K_{i+1}.
	\end{equation*}
	
Let \(x,x'\in\mathbb R^d\), and set
\[\delta v_i := v_i^\lambda(x)-v_i^\lambda(x'),
\qquad \delta\psi_i := \mathbb E \left[ \mathcal U_{i+1}^\lambda(X_{i+1}^{\pi,i,x}) - \mathcal U_{i+1}^\lambda(X_{i+1}^{\pi,i,x'}) \right], \qquad \delta h_i := h(t_i,x)-h(t_i,x'), \]

\[ \beta_i := \frac{ f(t_i,x,v_i^\lambda(x)) - f(t_i,x,v_i^\lambda(x')) }{ \delta v_i } \mathbf 1_{\{\delta v_i\neq0\}}, \qquad  \rho_i := f(t_i,x,v_i^\lambda(x')) - f(t_i,x',v_i^\lambda(x')). \]

For the penalization term, set
	\begin{align*}
		\alpha_i^\lambda
		:={}&
		\lambda
		\frac{
			(v_i^\lambda(x)-h(t_i,x))^-
			-
			(v_i^\lambda(x')-h(t_i,x'))^-
		}{
			\delta v_i-\delta h_i
		} 
		\mathbf 1_{\{\delta v_i-\delta h_i\neq0\}}.
	\end{align*}
	Since \(r\mapsto r^-\) is non-increasing and \(1\)-Lipschitz,
we have $
	-\lambda
	\leq
	\alpha_i^\lambda
	\leq
	0.
$
	Subtracting the equations
	\eqref{eq:def-vi-flow} corresponding to \(x\) and \(x'\), we obtain
	\[
	\delta v_i
	=
	\delta\psi_i
	+
	\Delta t_i
	\left[
	\beta_i\delta v_i
	+
	\rho_i
	+
	\alpha_i^\lambda
	(\delta v_i-\delta h_i)
	\right].
	\]
	Therefore,
	\begin{equation}
		\left(
		1-\beta_i\Delta t_i
		-\alpha_i^\lambda\Delta t_i
		\right)
		\delta v_i
		=
		\delta\psi_i
		+
		\Delta t_i\rho_i
		-
		\alpha_i^\lambda\Delta t_i\delta h_i.
		\label{eq:discrete-monotonicity}
	\end{equation}
	
	Set
	\[
	q_i:=-\alpha_i^\lambda\Delta t_i\geq0.
	\]
	By \ref{HL} we have $|\beta_i|\leq[f]$, it follows that
	\[
	1-\beta_i\Delta t_i+q_i
	\geq
	1-[f]\Delta t_i+q_i,
	\]
	we deduce from \eqref{eq:discrete-monotonicity} that
	\begin{align*}
		|\delta v_i|
		\leq
		\frac{
			|\delta\psi_i|
			+
			\Delta t_i|\rho_i|
			+
			q_i|\delta h_i|
		}{
			1-[f]\Delta t_i+q_i
		}.
	\end{align*}

For $|\delta\psi_i|$, by using Jensen's inequality followed by Hölder's inequality together with \eqref{inq:estimate-flow}, we obtain after changing the constant $C$
$$ |\delta\psi_i| \leq K_{i+1}(1+C\Delta t_i)|x-x'| . $$
Moreover, always with \ref{HL}, we have 
\[ |\rho_i| \leq C|x-x'|, \qquad |\delta h_i| \leq [h]|x-x'|. \]

It follows that
\begin{equation*}
|\delta v_i| \leq \frac{K_{i+1}(1+C\Delta t_i) + C\Delta t_i + q_i[h]}{1-[f]\Delta t_i+q_i}|x-x'|
\end{equation*}
For \(A,a>0\), \(B\geq0\), and \(q\geq0\), by using the elementary inequality 
\[ \frac{A+qB}{a+q} \leq \max\left\{ \frac Aa,B \right\},\]

we obtain
\begin{equation*}
|\delta v_i| \leq K_i|x-x'|,
\end{equation*}
with 
$$ K_i := \max\left\{\frac{ K_{i+1}(1+C\Delta t_i) + C\Delta t_i}{1-[f]\Delta t_i}	,[h]\right\} .$$

On one hand, the above inequality gives
\begin{equation*}
[v_i]\leq K_i.
\end{equation*}
On the other hand, since $ [f]\Delta t_i \leq \frac{1}{2}$,  we have $ \frac1{1-[f]\Delta t_i} \leq 1+2[f]\Delta t_i$. After changing \(C\), this gives
\[ K_{i} \leq (1+C\Delta t_i)K_{i+1} + C\Delta t_i. \]

Since \(K_N\geq[h]\), backward iteration gives
\[ K_i \leq K_N \prod_{j=i}^{N-1} (1+C\Delta t_j) + C \sum_{k=i}^{N-1} \Delta t_k \prod_{j=i}^{k-1} (1+C\Delta t_j). \]

Using $ 1+C\Delta t_j \leq e^{C\Delta t_j}$,  and $ \sum_{j=0}^{N-1}\Delta t_j=T$,  we obtain
\[ \sup_{0\leq i\leq N}K_i \leq (K_N+CT)e^{CT} =:K. \]

At time \(t_i\), choose the admissible network class $\mathcal G_{K_i,d,\ell,m}^{\rho^\kappa}$. Every network in this class is \(K_i\)-Lipschitz. Hence the minimizer selected by the algorithm satisfies
\[ [\mathcal U_i^\lambda] \leq K_i \leq K. \]

This closes the backward induction. Consequently,
\[ [v_i^\lambda]\leq K, \qquad [\mathcal U_i^\lambda]\leq K, \qquad i=0,\ldots,N-1, \]
where \(K\) is independent of \(i\), \(N\), and \(\lambda\).

\end{proof}

\begin{theorem}
\label{thm:errors-estimation-groupsort}
Assume \ref{HL}, let \(\lambda\geq1\)and assume that $ [f]|\pi|\leq\frac12$ and $\lambda|\pi|\leq C_0.$ Assume moreover that at each time step the minimisation problem \eqref{minimisation_problem} admits a minimizer \(\theta_{i,\lambda}^*\in\Theta_m\). Then there exist $\kappa,\ell\in\mathbb{N}^*$ and $m =(m_i)_{0\leq i\leq \ell-1} \in\mathbb{N}^\ell$, such that by choosing at each time level $i$ of DBDP-PBSDE scheme, the GroupSort class $\mathcal G_{K_i,d,\ell,m}^{\rho^\kappa}$, where \((K_i)_{0\leq  i\leq  N-1}\) are the constants of Lemma~\ref{lemma:lip-v}, we have
\begin{equation}
\label{eq:NN-total-error}
N\sum_{i=0}^{N-1} \varepsilon_i^{\mathcal N,v^\lambda} \leq C|\pi|.
\end{equation}
Consequently, and with $\lambda = c_0|\pi|^{-\frac{1}{3}}$, the DBDP-PBSDE approximation satisfies
\begin{equation*}
\max_{0\leq i\leq N-1} \mathbb E \left[ \left| Y_{t_i} - \mathcal U_i^\lambda(X_{i}^\pi) \right|^2 \right] \leq C|\pi|^\frac{1}{3}.
\end{equation*}
	
If in addition, the assumptions of Theorem~\ref{Z-error} are satisfied and the control process is reconstructed after training according to \eqref{Zlearned}, then
\begin{equation*}
\mathbb E \left[ \sum_{i=0}^{N-1} \int_{t_i}^{t_{i+1}} \left| Z_t-\mathcal Z_i^\lambda \right|^2\mathrm{d}s \right] \leq C|\pi|^\frac{1}{3},
\end{equation*}
where \(C>0\) is independent of \(N\), \(|\pi|\), and \(\lambda\).
\end{theorem}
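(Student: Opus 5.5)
The proof has two parts. First, Proposition~\ref{thm:univ approx2} combined with the uniform Lipschitz bounds of Lemma~\ref{lemma:lip-v} controls each $\varepsilon_i^{\mathcal N,v^\lambda}$. Then \eqref{eq:NN-total-error} is inserted into Corollary~\ref{coro:convergence-NN}.

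Fix $i$. By Lemma~\ref{lemma:lip-v}, $v_i^\lambda$ is $K_i$-Lipschitz with $K_i\le K$, uniformly in $\lambda$ and $N$, and the admissible class $\mathcal G_{K_i,d,\ell,m}^{\rho^\kappa}$ consists of $K_i$-Lipschitz functions. Since Proposition~\ref{thm:univ approx2} only works on a cube $[-R,R]^d$, we split
\[
\varepsilon_i^{\mathcal N,v^\lambda}\le \mathbb E\big[|v_i^\lambda(X_i^\pi)-g(X_i^\pi)|^2\mathbf 1_{\{|X_i^\pi|_\infty\le R\}}\big]+\mathbb E\big[|v_i^\lambda(X_i^\pi)-g(X_i^\pi)|^2\mathbf 1_{\{|X_i^\pi|_\infty> R\}}\big],
\]
where $g\in\mathcal G_{K_i,d,\ell,m}^{\rho^\kappa}$ is the approximant of $v_i^\lambda|_{[-R,R]^d}$.

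The first term is at most $(2RK\epsilon)^2$. For the second, both $v_i^\lambda$ and $g$ are $K$-Lipschitz, so $|v_i^\lambda-g|(x)\le |v_i^\lambda-g|(0)+2K|x|\le 2RK\epsilon+2K|x|$. Cauchy--Schwarz and Markov with a fourth moment of the Euler scheme, which is uniform in $N$, give a bound $C(1+R^2\epsilon^2)R^{-2}$. We also need $|v_i^\lambda(0)|$, or rather the values on the cube, to stay bounded so that the approximation is meaningful. This is not really needed, because the approximation error only depends on the Lipschitz constant after shifting by a constant; the bias parameters of the GroupSort class absorb this, with $B$ large enough. Hence
\[
\varepsilon_i^{\mathcal N,v^\lambda}\le C\big(R^2\epsilon^2+R^{-2}\big).
\]
Take $R=|\pi|^{-1/2}$ and $\epsilon=|\pi|^{3/2}/R=|\pi|^{2}$. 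Then each $\varepsilon_i^{\mathcal N,v^\lambda}\le C|\pi|^{3}$ up to the tail. To have $N\sum_i\varepsilon_i\le C|\pi|$ we need $\varepsilon_i\le C|\pi|^3$, so the tail term must also be $O(|\pi|^3)$. I will instead use higher moments: Hölder with moment $p$ gives a tail of order $R^{2-p}$, and we choose $R=|\pi|^{-3/(p-2)}$ with $p$ large, for instance $p=8$, $R=|\pi|^{-1/2}$. Then $\epsilon=|\pi|^{3/2}R^{-1}=|\pi|^2$, and both terms are $O(|\pi|^3)$. By Proposition~\ref{thm:univ approx2}, this fixes $\kappa=\lceil 2\sqrt d/\epsilon\rceil$ and the corresponding $\ell$, $m$. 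These depend on $|\pi|$ but not on $i$ or $\lambda$, which is exactly what is needed. Summing, $N\sum_{i}\varepsilon_i^{\mathcal N,v^\lambda}\le N\cdot N\cdot C|\pi|^3=CT^2|\pi|$, which proves \eqref{eq:NN-total-error}.

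For the final estimates, with $\lambda=c_0|\pi|^{-1/3}$ we have $\lambda|\pi|\to0$, so the step conditions hold for small $|\pi|$. Plugging \eqref{eq:NN-total-error} into \eqref{global-error-Y} and \eqref{global-error-Z} of Corollary~\ref{coro:convergence-NN} gives $C(|\pi|^{1/3}+|\pi|)\le C|\pi|^{1/3}$.

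The main obstacle is the uniformity of the tail/cube localisation. It needs Euler moments of order $p$ that are uniform in $N$, which follow from \ref{HL} in a standard way. It also needs the bias bound $B$ to be large enough that the network values on the cube can match $v_i^\lambda$ there, uniformly in $\lambda$. For this I would use the uniform growth bound $|v_i^\lambda(x)|\le C(1+|x|)$, obtained from the same monotonicity argument as in Lemma~\ref{lemma:lip-v} applied to $\lambda$-independent bounds, and let $B$ scale polynomially in $R$.
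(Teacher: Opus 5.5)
Your proof is correct and follows the paper's argument: you split $\varepsilon_i^{\mathcal N,v^\lambda}$ into a part on the cube and a tail part, use the $\lambda$-uniform Lipschitz bound of Lemma~\ref{lemma:lip-v} together with Proposition~\ref{thm:univ approx2}, and then insert \eqref{eq:NN-total-error} into Corollary~\ref{coro:convergence-NN}, exactly as the paper does. The only difference is the parameter choice. The paper uses fourth moments with $R=N^{3/2}$ and $\alpha=N^{-3}$, so $\kappa\sim N^3$, whereas your eighth-moment tail bound allows $R=|\pi|^{-1/2}$ and $\epsilon=|\pi|^{2}$, giving the smaller grouping size $\kappa\sim N^2$. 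Your discussion of the bias bound $B$ is not needed, since the paper, like you, treats Proposition~\ref{thm:univ approx2} as a black box.
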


\begin{proof}
Fix \(i\in\{0,\ldots,N-1\}\), and let $\kappa,\ell\in\mathbb{N}^*$ and $m =(m_i)_{0\leq i\leq \ell-1} \in\mathbb{N}^\ell$ to be chosen later.

By considering the neural network class $\mathcal G_{K_i,d,\ell,m}^{\rho^\kappa}$, then $\varepsilon_i^{\mathcal N,v^\lambda}$ defined by \eqref{eq:NN-v-error} can be written as
\[ \varepsilon_i^{\mathcal N,v^\lambda} = \inf_{\mathcal U\in \mathcal G_{K_i,d,\ell,m}^{\rho^\kappa}} \mathbb E \left[ \left| v_i^\lambda(X_{i}^\pi) - \mathcal U(X_{i}^\pi) \right|^2 \right]. \]

\(R>0\), and \(\alpha\in(0,1]\).
	By Lemma~\ref{lemma:lip-v},
	\[
	[v_i^\lambda]\leq K_i\leq K.
	\]

Proposition~\ref{thm:univ approx2} therefore yields a \(K_i\)-Lipschitz GroupSort network $\mathcal U_i^{\alpha,R} \in \mathcal G_{K_i,d,\ell,m}^{\rho^\kappa} $
	such that
	\begin{equation}
		\sup_{x\in[-R,R]^d}
		\left|
		v_i^\lambda(x)
		-
		\mathcal U_i^{\alpha,R}(x)
		\right|
		\leq
		2RK_i\alpha
		\leq
		2RK\alpha.
		\label{eq:GS-uniform-improved}
	\end{equation}
	
	We decompose
	\[
	\mathbb E
	\left[
	\left|
	v_i^\lambda(X_i^\pi)
	-
	\mathcal U_i^{\alpha,R}(X_i^\pi)
	\right|^2
	\right]
	=
	I_{i,R}+J_{i,R},
	\]
	where
	\[
	I_{i,R}
	:=
	\mathbb E
	\left[
	\left|
	v_i^\lambda(X_i^\pi)
	-
	\mathcal U_i^{\alpha,R}(X_i^\pi)
	\right|^2
	\mathbf 1_{\{|X_i^\pi|\leq R\}}
	\right],\qquad
	J_{i,R}
	:=
	\mathbb E
	\left[
	\left|
	v_i^\lambda(X_i^\pi)
	-
	\mathcal U_i^{\alpha,R}(X_i^\pi)
	\right|^2
	\mathbf 1_{\{|X_i^\pi|>R\}}
	\right].
	\]
	Since $\{|x|\leq R\} \subset[-R,R]^d$, \eqref{eq:GS-uniform-improved} gives
	\begin{equation}
		I_{i,R}
		\leq
		4R^2K^2\alpha^2.
		\label{eq:I-R-improved}
	\end{equation}
	
	For the second term, both
	\(v_i^\lambda\) and
	\(\mathcal U_i^{\alpha,R}\) are \(K\)-Lipschitz.
	Furthermore, since \(0\in[-R,R]^d\),
	\[
	|v_i^\lambda(0)
	-
	\mathcal U_i^{\alpha,R}(0)|
	\leq
	2RK\alpha.
	\]
Thus, for every \(x\in\mathbb R^d\),
\begin{align*}
|v_i^\lambda(x) - \mathcal U_i^{\alpha,R}(x)| &\leq |v_i^\lambda(x)-v_i^\lambda(0)| + |v_i^\lambda(0) - \mathcal U_i^{\alpha,R}(0)| + |\mathcal U_i^{\alpha,R}(0) - \mathcal U_i^{\alpha,R}(x)| \\
	&\leq 2K|x| + 2RK\alpha.
\end{align*}
	Consequently,
	\[
	|v_i^\lambda(x)
	-
	\mathcal U_i^{\alpha,R}(x)|^2
	\leq
	CK^2
	\left(
	|x|^2
	+
	R^2\alpha^2
	\right).
	\]
	Hence
	\begin{align}
		J_{i,R}
		\leq{}&
		C
		\mathbb E
		\left[
		|X_i^\pi|^2
		\mathbf 1_{\{|X_i^\pi|>R\}}
		\right]
		\nonumber+
		CR^2\alpha^2
		\mathbb P(|X_i^\pi|>R).
		\label{eq:J-R-improved}
	\end{align}
	
	Under \ref{HL}, the Euler scheme satisfies
	\[ \sup_{0\leq i\leq N} \mathbb E \left[|X_i^\pi|^4 \right]\leq C. \]
	Therefore, by H\"older's and Markov's inequalities,
	\[
	\mathbb E
	\left[
	|X_i^\pi|^2
	\mathbf 1_{\{|X_i^\pi|>R\}}
	\right]
	\leq
	\frac C{R^2},\qquad\text{and}\qquad
	\mathbb P(|X_i^\pi|>R)
	\leq
	\frac C{R^4}.
	\]
	Substituting into the above bound of $J_{i,R}$, we obtain
	\[
	J_{i,R}
	\leq
	C
	\left(
	\frac1{R^2}
	+
	\frac{\alpha^2}{R^2}
	\right).
	\]
	Since \(\alpha\leq1\),
	\[
	J_{i,R}
	\leq
	\frac C{R^2}.
	\]
	
	Combining this estimate with
	\eqref{eq:I-R-improved} yields
	\begin{equation*}
		\varepsilon_i^{\mathcal N,v^\lambda}
		\leq
		C
		\left(
		R^2\alpha^2
		+
		\frac1{R^2}
		\right).
	\end{equation*}
	Consequently,
	\[
	N
	\sum_{i=0}^{N-1}
	\varepsilon_i^{\mathcal N,v^\lambda}
	\leq
	CN^2
	\left(
	R^2\alpha^2
	+
	\frac1{R^2}
	\right).
	\]
Choose $ R=N^{3/2}$ and $\alpha=N^{-3}$, then
\[ N^2R^2\alpha^2 = N^{-1}, \qquad \frac{N^2}{R^2} = N^{-1}. \]

Since $|\pi| = \frac{T}{N}$, we obtain
$$ N\sum_{i=0}^{N-1}\varepsilon_i^{\mathcal N,v^\lambda} \leq C|\pi|$$
which proves \eqref{eq:NN-total-error}.

For \(d>1\), Proposition~\ref{thm:univ approx2} allows us to choose
\[ \kappa = \left\lceil \frac{2\sqrt d}{\alpha} \right\rceil = \left\lceil 2\sqrt d N^3 \right\rceil, \]
with depth and width satisfying
\[ \ell+1=O(d^2),\qquad  \sum_{i=0}^{\ell-1}m_i = O\left( \left(2\sqrt d N^3 \right)^{d^2-1} \right).\]
For \(d=1\), we choose
$$\kappa = \left\lceil N^3\right\rceil,\qquad \ell + 1 = 3,\qquad \text{and}\qquad \sum_{i=0}^{\ell -1}m_i = O(N^3).$$

Finally, inserting \eqref{eq:NN-total-error} into the global error estimates \eqref{global-error-Y} and \eqref{global-error-Z}, and using $|\pi|\leq C|\pi|^\frac{1}{3}$, we obtain the desired result.
\end{proof}

\begin{remark}
	\label{rem:silu-numerics}
The convergence analysis is based on Lipschitz-constrained GroupSort networks, which provide explicit control of the neural-network approximation error. The uniform Lipschitz estimates established above allow these constraints to be chosen independently of the penalization parameter \(\lambda\).

For the numerical experiments, we instead use standard feedforward networks with SiLU activation for computational convenience. The DBDP recursion and loss function are unchanged; only the approximation class is modified. Thus, GroupSort networks provide the theoretical approximation framework, while SiLU networks are used as a practical implementation. No separate quantitative approximation result for the SiLU architecture is claimed.
\end{remark}
\section{Numerical Experiments}
\label{sec:numerics}

The purpose of this section is to illustrate the error analysis developed in
Section~\ref{sec:convergence}, rather than to benchmark the proposed scheme
against alternative numerical methods. The reference values used below are
either available in closed form or obtained from an independent high-accuracy
benchmark, so that the reported errors measure the distance to a reference
solution rather than the discrepancy between two numerical approximations.

As discussed in Remark~\ref{rem:silu-numerics}, the GroupSort architecture is used as a theoretical approximation tool, whereas the SiLU networks provide the practical implementation of the same backward approximation principle. However, we use standard feedforward networks with SiLU activation for computational convenience.

\subsection{Implementation details}

All experiments are carried out with the same set of hyperparameters. We set
\[
\lambda=c_0N^{1/3},
\qquad
c_0=10.
\]

At each time level \(t_i\), the value function is approximated by a
feedforward neural network
\[
\mathcal U(\cdot;\theta):\mathbb R^d\longrightarrow\mathbb R.
\]
Unless otherwise specified, we use \(\ell=2\) hidden layers with \(d+10\)
neurons per layer and SiLU activation. The standardized state
\(X_{t_i}^{\pi}\) is used as input.

The parameters obtained at time \(t_{i+1}\) are used to initialize the
optimization at time \(t_i\). We perform \(2\,000\) gradient steps at the
first backward optimization step and \(400\) gradient steps at each subsequent
step. The loss is minimized using the Adam optimizer, with learning rate
\(10^{-2}\) at the first backward step and \(2\times10^{-3}\) afterwards.

Mini-batches of size \(4096\) are drawn from a fixed sample of
$
M=2^{16},
$
forward trajectories, generated once for each independent run. 

The control process is reconstructed only in
Example~\ref{subsec:ex1}, with \(d=1\), in order to assess the accuracy of
the post-training approximation~\eqref{zfunction} against the known exact
control. Once the backward sweep has been completed, \(z_i^\lambda\) is
obtained by least-squares regression on a polynomial basis of degree \(7\).
The conditional expectation in~\eqref{zfunction} is estimated using \(16\)
antithetic Brownian increments generated at each training point. The control
reconstruction is performed after training and does not enter the loss used
to optimize the value networks.

The errors are evaluated on an independent sample of
$
M_{\mathrm{test}}=2^{17},
$
paths. Writing
\[
\bigl(X^{\pi,p}\bigr)_{1\leq p\leq M_{\mathrm{test}}}
\]
for this sample, we report
\begin{equation*}
	\widehat{\mathcal E}_Y
	=
	\max_{0\leq i\leq N-1}
	\frac{1}{M_{\mathrm{test}}}
	\sum_{p=1}^{M_{\mathrm{test}}}
	\left|
	u\bigl(t_i,X_{t_i}^{\pi,p}\bigr)
	-
	\mathcal U_i^\lambda
	\bigl(X_{t_i}^{\pi,p}\bigr)
	\right|^2.
\end{equation*}

When the control process is reconstructed, we also report
\begin{equation*}
	\widehat{\mathcal E}_Z
	=
	\frac{1}{M_{\mathrm{test}}}
	\sum_{p=1}^{M_{\mathrm{test}}}
	\sum_{i=0}^{N-1}
	\sum_{j=0}^{n_{\mathrm{sub}}-1}
	\delta
	\left|
	Z\bigl(s_{i,j},X_{s_{i,j}}^p\bigr)
	-
	z_i^\lambda
	\bigl(X_{t_i}^{\pi,p}\bigr)
	\right|^2,
\end{equation*}
where
\[
s_{i,j}=t_i+j\delta,
\qquad
\delta=\frac{\Delta t_i}{n_{\mathrm{sub}}},
\qquad
n_{\mathrm{sub}}=5.
\]
The fine sub-grid is simulated jointly with the coarse grid.

Each configuration is repeated \(50\) times with independent random seeds, and we report the empirical mean and standard deviation over these runs.
All computations are performed in single precision using PyTorch on two
NVIDIA T4 GPUs.

\subsection{Reflected BSDE with an active obstacle and an explicit solution}

Let \(W\) be a \(d\)-dimensional Brownian motion, and let the forward process
be the \(d\)-dimensional Ornstein--Uhlenbeck process
\begin{equation}
	\mathrm dX_t^k
	=
	\theta(m-X_t^k)\,\mathrm dt
	+
	\sigma\,\mathrm dW_t^k,
	\qquad
	X_0^k=x_0,
	\qquad
	k=1,\ldots,d.
\end{equation}

For \(x=(x_1,\ldots,x_d)\in\mathbb R^d\), define
\[
\bar x
=
\frac1d\sum_{k=1}^d x_k,
\qquad
\varphi(s)
=
\sqrt{1+s^2},
\qquad
c(t)
=
1+T-t,
\]
together with
\[
\rho(s)
=
\frac{(s^+)^2}{1+s^+},
\qquad
b(t)
=
b_0+a\sin\left(\frac{2\pi t}{T}\right).
\]

We consider the reflected BSDE
\begin{equation*}
  \left\{
  \begin{aligned}
    &Y_t = \xi + \int_t^T f(s,X_s,Y_s)\,\mathrm{d}s + K_T - K_t
      - \int_t^T Z_s\,\mathrm{d}W_s, \qquad 0\leq  t\leq  T,\\
    &Y_t \geq h(t,X_t), \qquad 0\leq  t\leq  T,\\
    &K \text{ continuous and non-decreasing},\quad K_0 = 0,\\
    &\int_0^T \big(Y_t - h(t,X_t)\big)\,\mathrm{d}K_t = 0 ,
  \end{aligned}
  \right.
\end{equation*}

where the terminal condition is
\[
\xi=g(X_T),
\qquad
g(x)=\varphi(\bar x),
\]
the obstacle is given by
\begin{equation*}
	h(t,x)
	=
	c(t)\varphi(\bar x)
	-
	\kappa_1
	\rho\bigl(\bar x-b(t)\bigr),
\end{equation*}
and the generator is defined by
\begin{equation*}
	f(t,x,y)
	=
	-r y
	+
	\gamma\sin y
	+
	q(t,x)
	-
	\psi(t,x),
\end{equation*}
where
\[
\psi(t,x)
=
\kappa_2
\left(
1-e^{-(b(t)-\bar x)^+}
\right)
\]
and
\begin{equation*}
	\begin{aligned}
		q(t,x)
		={}&
		\varphi(\bar x)
		-
		c(t)
		\left[
		\theta(m-\bar x)\varphi'(\bar x)
		+
		\frac{\sigma^2}{2d}\varphi''(\bar x)
		\right]
		\\
		&+
		r\,c(t)\varphi(\bar x)
		-
		\gamma\sin\bigl(c(t)\varphi(\bar x)\bigr).
	\end{aligned}
\end{equation*}

The reflected BSDE admits the explicit solution
\begin{equation*}
Y_t = (1+T-t) \sqrt{1+\bar X_t^{\,2}},\qquad Z_t^j = \frac{\sigma}{d} (1+T-t) \frac{\bar X_t} {\sqrt{1+\bar X_t^{\,2}}},\;\; j=1,\ldots,d,
\end{equation*}

and the reflection process is absolutely continuous and satisfies
\begin{equation*}
	\mathrm dK_t
	=
	\psi(t,X_t)\,\mathrm dt.
\end{equation*}

We take
\[
T=1,
\qquad
x_0=\frac12,
\qquad
\theta=1,
\qquad
m=\frac12,
\qquad
\sigma=\frac12,
\]
\[
r=\frac12,
\qquad
\gamma=\frac{3}{10},
\qquad
\kappa_1=1,
\qquad
\kappa_2=2,
\qquad
b_0=\frac12,
\qquad
a=\frac{3}{10}.
\]

The functions entering the example satisfy the regularity assumptions used in the analysis.

\subsubsection{One-dimensional case}
\label{subsec:ex1}

We first consider \(d=1\).
Table~\ref{tab:err_example1_Dim1} reports the approximations of the initial
value \(Y_0\), together with the empirical \(L^2\)-errors of the value and control processes.

The approximation of the value component improves steadily as the time grid
is refined. The relative error on \(Y_0\) decreases from \(0.386\%\) for
\(N=10\) to \(0.101\%\) for \(N=80\), while
\(\widehat{\mathcal E}_Y\) decreases from
\(7.24\times10^{-4}\) to \(1.73\times10^{-4}\).

For the control component, the global empirical error
\(\widehat{\mathcal E}_Z\) also decreases consistently with the time
discretization, from
$
3.43\times10^{-3}
$
for \(N=10\) to
$
4.95\times10^{-4}
$
for \(N=80\).

\begin{table}[H]
	\centering
	\setlength{\tabcolsep}{3pt}
	\resizebox{\textwidth}{!}{%
		\begin{tabular}{|ccc|ccccc|cc|}
			\hline
			$d$ & $N$ & $\lambda$
			& $Y^{\mathrm{net}}_0$ (mean)
			& std
			& $Y_0^{\mathrm{ref}}$
			& $|\mathrm{err}|$
			& rel. err.
			& $\widehat{\mathcal E}_Y$ (std)
			& $\widehat{\mathcal E}_Z$ (std)
			\\
			\hline
			1 & 10 & 21.543 & 2.227437 & 1.03e-04 & 2.236068 & 8.631e-03 & 0.386\% & 7.24e-04 (5.5e-06) & 3.43e-03 (2.1e-05)  \\
			1 & 20 & 27.141 & 2.231310 & 8.79e-05 & 2.236068 & 4.758e-03 & 0.213\% & 4.24e-04 (4.8e-06) & 1.71e-03 (1.9e-05) \\
			1 & 40 & 34.195 & 2.232856 & 7.28e-05 & 2.236068 & 3.212e-03 & 0.144\% & 2.69e-04 (2.2e-06) & 9.02e-04 (6.8e-06) \\
			1 & 80 & 43.082 & 2.233819 & 8.80e-05 & 2.236068 & 2.249e-03 & 0.101\% & 1.73e-04 (1.4e-06) & 4.95e-04 (7.2e-06) \\
			\hline
		\end{tabular}%
	}
	\caption{\centering
		Dimension \(d=1\): approximation of \(Y_0\), and empirical \(L^2\)-errors of \(Y\) and \(Z\) as functions of the number of time steps \(N\).
	}
	\label{tab:err_example1_Dim1}
\end{table}
Figure~\ref{fig:trajectory_example1_Dim1} displays one realization of the
learned solution along a trajectory of the forward process, together with the
corresponding obstacle and exact solution. The learned value process closely
tracks the exact solution and reproduces the contact region where the
reflection is active. Since the numerical method is based on penalization,
the reflection constraint is not imposed exactly for finite \(\lambda\).

The reconstructed control follows the exact control over the time interval.

\begin{figure}[H]
	\centering
	\begin{subfigure}[b]{0.48\textwidth}
		\includegraphics[width=\textwidth]{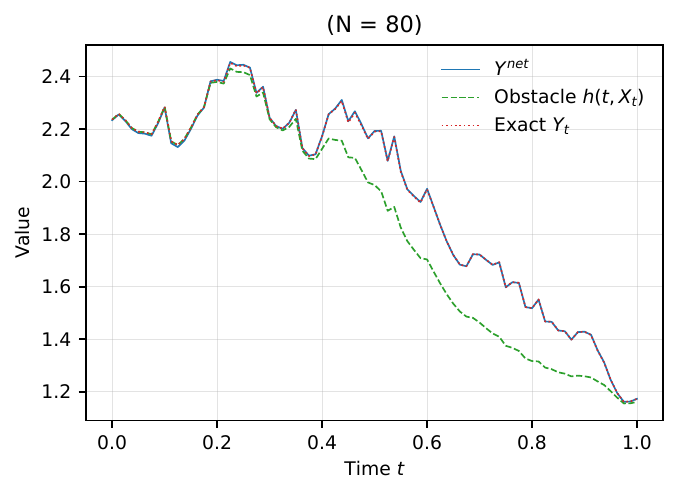}
	\end{subfigure}
	\hfill
	\begin{subfigure}[b]{0.48\textwidth}
		\includegraphics[width=\textwidth]{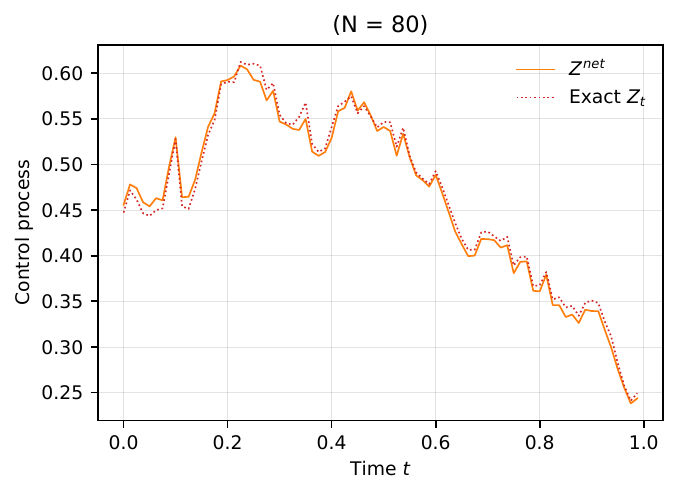}
	\end{subfigure}
	\caption{\centering
		Left: sample paths of the learned and exact value processes, together with
		the corresponding obstacle \(h(t,X_t)\).
		Right: sample paths of the exact control process and its post-training
		reconstruction. All curves correspond to the same realization of the
		forward process.
	}
	\label{fig:trajectory_example1_Dim1}
\end{figure}

\subsubsection{Multi-dimensional case}

We next investigate the numerical behavior of the proposed scheme as the
dimension increases. Table~\ref{tab:exacte_solution_MultiDim} below reports the
approximation of the initial value \(Y_0\), the corresponding relative error,
and the empirical \(L^2\)-error of the learned value process for several
dimensions and time discretizations. We do not report errors for the control
process in this experiment, since the post-training reconstruction
\eqref{zfunction} of \(Z\) is examined only in the one-dimensional case.

For $d\in\{150,250,300\}$, we used networks with 3 hidden layers, and 64 neurones in each layer, while for $d=50$, we used two hidden layers with $60 = d + 10$ neurons.

\begin{table}[htbp]
	\centering
	\begin{tabular}{|ccc|cccc|c|}
		\hline
		$d$ & $N$ & $\lambda$ & $Y^{\mathrm{net}}_0$ (mean) & std
		& $Y_0^{\mathrm{ref}}$ & rel. err.
		& $\widehat{\mathcal E}_Y$ (std) \\
		\hline
		50 & 10 & 21.543 & 2.228050 & 1.67e-04 & 2.236068 & 0.359\% & 5.47e-04 (3.8e-06) \\
		50 & 20 & 27.141 & 2.231849 & 1.64e-05 & 2.236068 & 0.189\% & 3.24e-04 (4.3e-07) \\
		50 & 40 & 34.195 & 2.233327 & 6.67e-06 & 2.236068 & 0.123\% & 2.12e-04 (2.2e-07) \\
		50 & 80 & 43.082 & 2.234260 & 6.33e-06 & 2.236068 & 0.081\% & 1.38e-04 (2.5e-07) \\
		\hline
		150 & 10 & 21.543 & 2.227824 & 2.70e-04 & 2.236068 & 0.369\% & 5.51e-04 (3.6e-06) \\
		150 & 20 & 27.141 & 2.231844 & 1.75e-05 & 2.236068 & 0.189\% & 3.25e-04 (1.1e-06) \\
		150 & 40 & 34.195 & 2.233327 & 4.52e-06 & 2.236068 & 0.123\% & 2.12e-04 (2.1e-07) \\
		150 & 80 & 43.082 & 2.234256 & 3.99e-06 & 2.236068 & 0.081\% & 1.40e-04 (4.4e-06) \\
		\hline
		250 & 10 & 21.543 & 2.228311 & 6.07e-04 & 2.236068 & 0.347\% & 5.53e-04 (5.2e-06) \\
		250 & 20 & 27.141 & 2.231799 & 2.66e-05 & 2.236068 & 0.191\% & 3.27e-04 (1.5e-06) \\
		250 & 40 & 34.195 & 2.233321 & 3.17e-05 & 2.236068 & 0.123\% & 2.12e-04 (5.9e-07) \\
		250 & 80 & 43.082 & 2.234255 & 3.57e-06 & 2.236068 & 0.081\% & 1.39e-04 (4.8e-07) \\
		\hline
		300 & 10 & 21.543 & 2.228298 & 5.02e-04 & 2.236068 & 0.347\% & 5.57e-04 (7.1e-06) \\
		300 & 20 & 27.141 & 2.231795 & 2.80e-05 & 2.236068 & 0.191\% & 3.27e-04 (1.8e-06) \\
		300 & 40 & 34.195 & 2.233325 & 3.57e-06 & 2.236068 & 0.123\% & 2.13e-04 (7.3e-07) \\
		300 & 80 & 43.082 & 2.234255 & 6.74e-06 & 2.236068 & 0.081\% & 1.39e-04 (5.4e-07) \\
		\hline
	\end{tabular}
	\caption{\centering
		Approximation of \(Y_0\) and empirical \(L^2\)-error of the value process
		for \(d\in\{50,150,250,300\}\) and several numbers of time steps \(N\).
	}
	\label{tab:exacte_solution_MultiDim}
\end{table}

The results show that the approximation remains stable as the dimension
increases. For every tested dimension, refining the time grid improves both
the approximation of \(Y_0\) and the empirical \(L^2\)-error
\(\widehat{\mathcal E}_Y\). For \(N=80\), the relative error is approximately
\(0.081\%\) for all four dimensions. Likewise,
\(\widehat{\mathcal E}_Y\) decreases from values of order
\(5.5\times10^{-4}\) for \(N=10\) to values of order
\(1.4\times10^{-4}\) for \(N=80\).

In this experiment, no significant deterioration is observed when the
dimension increases from \(50\) to \(300\). This stability is consistent
with the particular structure of the exact solution, which depends on the
state only through the empirical mean
\[
\bar X_t
=
\frac1d\sum_{k=1}^d X_t^k.
\]
Thus, the example should be interpreted as a structured high-dimensional
benchmark rather than as a completely generic \(d\)-dimensional problem.

Figure~\ref{fig:paths_example1_d50_150_250_300} below  confirms the same qualitative
behavior. The learned value process closely follows the exact solution and
reproduces the region where the obstacle becomes active. Since the numerical
method is based on penalization, exact satisfaction of the reflection
constraint is not imposed for finite \(\lambda\).

\begin{figure}[!ht]
	\centering
	
	\begin{subfigure}[b]{0.45\textwidth}
		\centering
		\includegraphics[width=\textwidth]{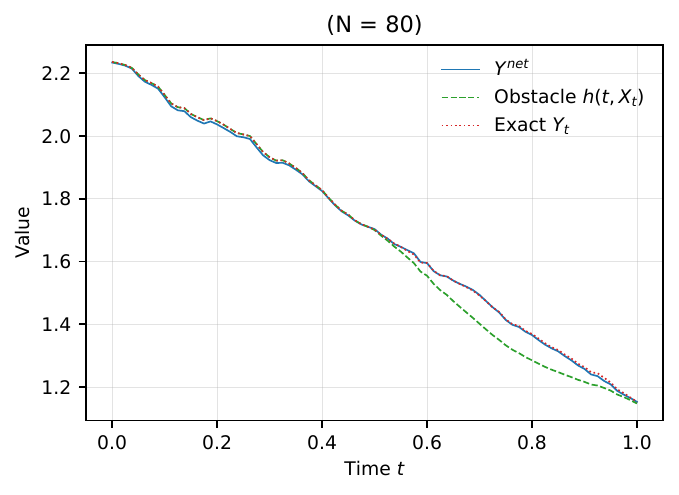}
		\caption{$d=50$}
	\end{subfigure}
	\hfill
	\begin{subfigure}[b]{0.45\textwidth}
		\centering
		\includegraphics[width=\textwidth]{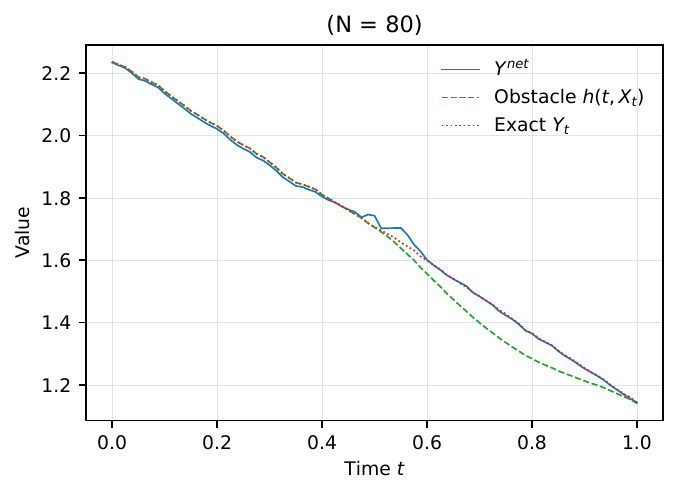}
		\caption{$d=150$}
	\end{subfigure}
	
	\vspace{0.5em}
	
	\begin{subfigure}[b]{0.45\textwidth}
		\centering
		\includegraphics[width=\textwidth]{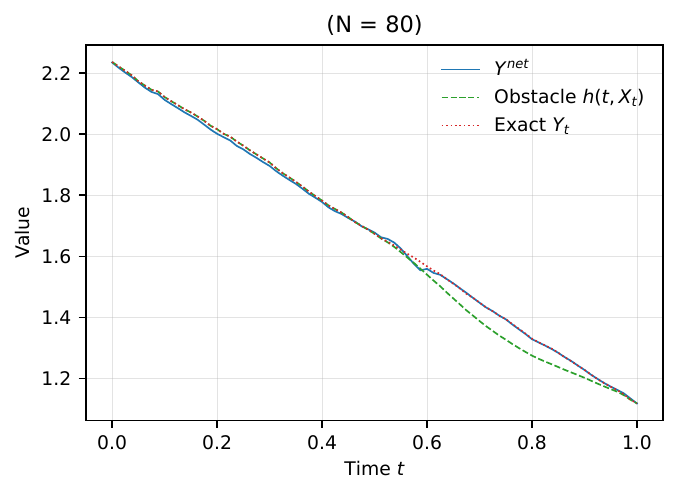}
		\caption{$d=250$}
	\end{subfigure}
	\hfill
	\begin{subfigure}[b]{0.45\textwidth}
		\centering
		\includegraphics[width=\textwidth]{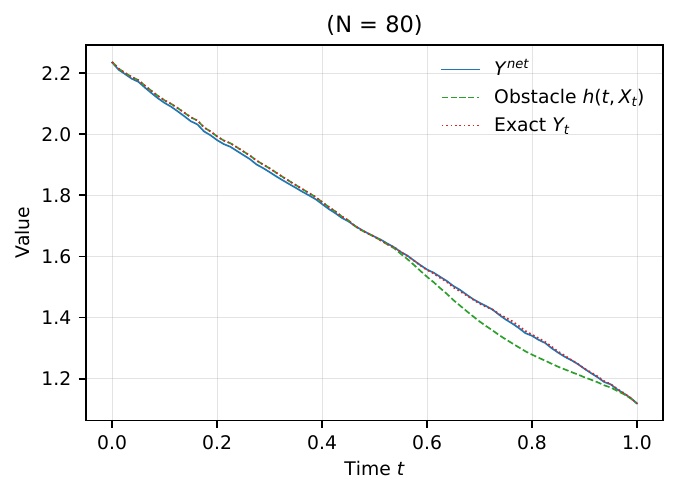}
		\caption{$d=300$}
	\end{subfigure}
	
	\caption{\centering Sample paths of the learned value process $Y^{\mathrm{net}}$, the exact value process $Y$, and the obstacle, associated with the same realization of the forward process, for $d\in\{50,150,250,300\}$.}
	\label{fig:paths_example1_d50_150_250_300}
\end{figure}
\subsection{American Put Option -- One-dimensional case}

In this example, we consider the pricing of an American put option under the
risk-neutral probability measure. Let \((X_t)_{t\in[0,T]}\) denote the
one-dimensional underlying asset price process, governed by the
Black--Scholes dynamics
\[
\mathrm dX_t
=
rX_t\,\mathrm dt
+
\sigma X_t\,\mathrm dW_t,
\qquad
X_0=100,
\]
where \(W\) is a standard Brownian motion under the risk-neutral measure.
We take
\[
r=3\%,
\qquad
\sigma=0.2,
\qquad
K_{\mathrm{str}}=100,
\qquad
T=1.
\]
The payoff function is
\[
h(x)
=
(K_{\mathrm{str}}-x)^+.
\]

The American put price function is characterized by the optimal stopping
problem
\[
u(t,x)
=
\sup_{\tau\in\mathcal T_{t,T}}
\mathbb E
\left[
e^{-r(\tau-t)}
h(X_\tau)
\,\middle|\,
X_t=x
\right],
\]
where \(\mathcal T_{t,T}\) denotes the set of stopping times taking values in
\([t,T]\).

The price function admits the probabilistic representation
\[
u(t,X_t)=Y_t,
\]
where \((Y,Z,K)\) solves the reflected BSDE
\[
Y_t
=
h(X_T)
+
\int_t^T
(-rY_s)\,\mathrm ds
+
K_T-K_t
-
\int_t^T
Z_s\,\mathrm dW_s,
\]
subject to the reflection conditions
\[
Y_t\geq h(X_t),
\qquad
0\leq t\leq T,
\]
and
\[
\int_0^T
\bigl(Y_t-h(X_t)\bigr)\,\mathrm dK_t
=
0.
\]
The reference value $Y_0^{\mathrm{ref}} = 6.742045$ is obtained with a $1000$-step Cox–Ross–Rubinstein binomial tree, independently of the schemes under study.

\begin{table}[H]
	\centering
	\begin{tabular}{cccccc}
		\hline
		$N$ & $\lambda$ & $Y^{\mathrm{net}}_0$ (mean)
		& std & $Y_0^{\mathrm{ref}}$ & rel. err. \\
		\hline
		10 & 21.543 & 6.690921 & 1.82e-02 & 6.742045 & 0.758\% \\
		20 & 27.141 & 6.716370 & 1.91e-02 & 6.742045 & 0.381\% \\
		40 & 34.195 & 6.729796 & 1.69e-02 & 6.742045 & 0.182\% \\
		80 & 43.082 & 6.737160 & 1.74e-02 & 6.742045 & 0.072\% \\
		\hline
	\end{tabular}
	\caption{\centering
		American put option in dimension \(d=1\): mean, standard deviation, and
		relative error over \(50\) independent runs of the predicted initial price
		\(Y_0^{\mathrm{net}}\) with respect to the reference value
		\(Y_0^{\mathrm{ref}}\).
	}
	\label{tab:AmericanPut_D1}
\end{table}

The results reported in Table~\ref{tab:AmericanPut_D1} show a systematic
improvement as the time grid is refined. The relative error decreases from
\(0.758\%\) for \(N=10\) to \(0.072\%\) for \(N=80\). The standard deviation
remains of the same order over the \(50\) independent runs, indicating stable
training across the tested time discretizations. Moreover, the predicted
initial value approaches the reference value monotonically as \(N\) increases.

Two realizations of the learned value process are displayed in
Figure~\ref{fig:paths_PutOption_1d}. Since the numerical method solves a penalized BSDE for finite \(\lambda\), the reflection constraint is not imposed exactly at the numerical
level.

\begin{figure}[htbp]
	\centering
	\begin{subfigure}[b]{0.45\textwidth}
		\includegraphics[width=\textwidth]{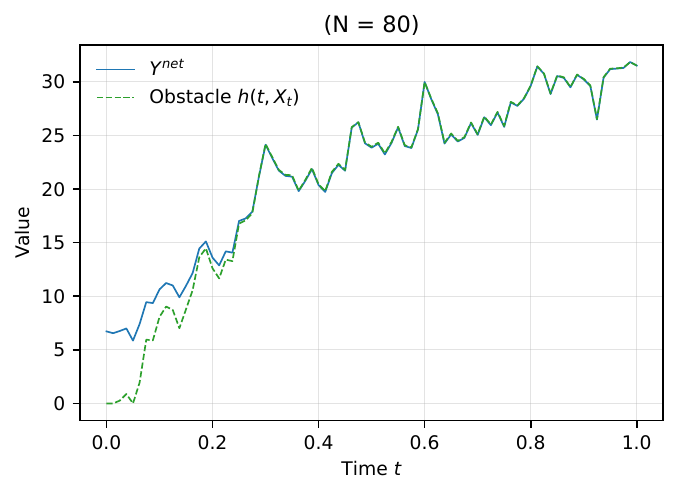}
	\end{subfigure}
	\hfill
	\begin{subfigure}[b]{0.45\textwidth}
		\includegraphics[width=\textwidth]{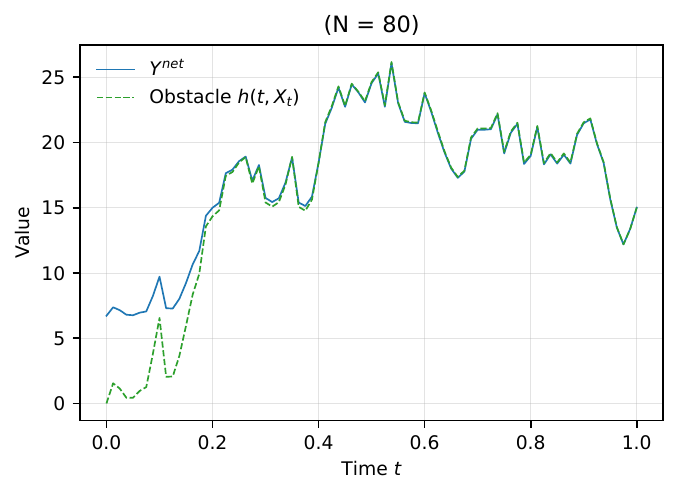}
	\end{subfigure}
	\caption{\centering
		American put option in dimension \(d=1\): two independent sample paths of
		the learned value process \(Y^{\mathrm{net}}\), together with the
		corresponding payoff processes \(h(X_t)\). In each panel, the learned
		value process and the payoff are associated with the same realization of
		the underlying asset process.
	}
	\label{fig:paths_PutOption_1d}
\end{figure}

Figure~\ref{fig:learned_value_function} displays the learned value functions $ x\longmapsto\mathcal U_i^\lambda(x)$
at several time levels, together with the payoff function $x\longmapsto h(x).$
As \(t_i\) approaches maturity, the learned value functions approach the
terminal payoff, consistently with the terminal condition
\[
u(T,x)=h(x).
\]

\begin{figure}[h]
	\centering
	\includegraphics[width=0.5\textwidth]{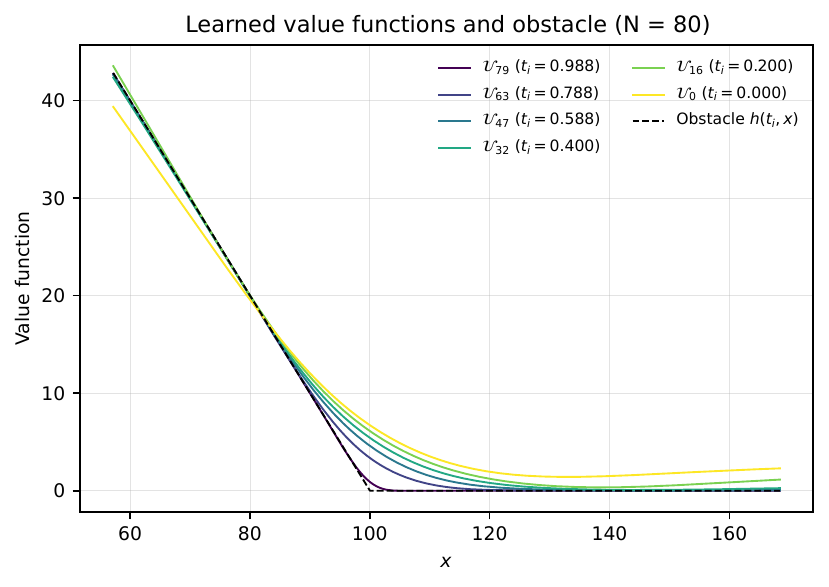}
	\caption{\centering
		American put option in dimension \(d=1\): learned value functions at
		different time levels \(t_i\), together with the payoff function \(h\).
	}
	\label{fig:learned_value_function}
\end{figure}

\subsection{American Put Option -- Multi-dimensional case}

Finally, we consider an American put option on the product of \(d\) assets
in a multi-dimensional Black--Scholes model. Let
\[
X_t=(X_t^1,\ldots,X_t^d),
\qquad
0\leq t\leq T,
\]
denote the vector of asset prices. Under the risk-neutral probability measure,
the components of \(X\) satisfy
\[
\mathrm dX_t^i
=
X_t^i
\left(
r\,\mathrm dt
+
\sum_{j=1}^d
\sigma_{ij}\,\mathrm dW_t^j
\right),
\qquad
X_0^i=1,
\qquad
i=1,\ldots,d,
\]
where \(W\) is a \(d\)-dimensional Brownian motion, $r=5\%$ is the risk-free interest rate, and $\sigma=0.2\,I_d$ is the volatility matrix. The maturity and strike price are respectively $T=1$ and $K_{\mathrm{str}}=1$.

The payoff is given by
\[
g(x)
=
\left(
K_{\mathrm{str}}
-
\prod_{k=1}^d x_k
\right)^+,
\qquad
x=(x_1,\ldots,x_d)\in(0,\infty)^d,
\]
where \(a^+=\max(a,0)\).

The arbitrage-free price process is characterized by the solution
\((Y,Z,K)\) of the reflected BSDE
\[
Y_t
=
g(X_T)
+
\int_t^T(-rY_s)\,\mathrm ds
+
K_T-K_t
-
\int_t^T Z_s\,\mathrm dW_s,
\]
subject to
\[
Y_t\geq g(X_t),
\qquad
0\leq t\leq T,
\]
and
\[
\int_0^T
\bigl(Y_t-g(X_t)\bigr)\,\mathrm dK_t
=
0.
\]

For numerical purposes, we use a logarithmic change of variables. Let \(u\)
denote the value function in the original variables and define \(v\) by
\[
u(t,x)
=
e^{rt}v(t,\log x),
\]
where the logarithm is applied componentwise. Equivalently,
$
v(t,y)
=
e^{-rt}u(t,e^y).
$
Then \(v\) solves the variational inequality
\begin{equation}
	\label{eq:log-variational}
	\left\{
	\begin{aligned}
		&\min\bigl(
		-\partial_t v-\mathcal Lv,\,
		v-\widehat g
		\bigr)=0,
		&&\text{on }[0,T)\times\mathbb R^d,
		\\
		&v(T,\cdot)=\widehat g(T,\cdot),
		&&\text{on }\mathbb R^d,
	\end{aligned}
	\right.
\end{equation}
where
\[
\widehat g(t,x)
=
e^{-rt}g(e^x)
=
e^{-rt}
\left(
K_{\mathrm{str}}
-
e^{\sum_{i=1}^d x_i}
\right)^+
\]
and
\[
\mathcal Lv
=
\frac12
\sum_{i=1}^d
\sigma_i^2D_{x_ix_i}^2v
+
\sum_{i=1}^d
\left(
r-\frac12\sigma_i^2
\right)
D_{x_i}v,
\qquad
\sigma_i=0.2.
\]

In logarithmic coordinates, the forward process is the arithmetic Brownian
motion
\[
\mathrm d\widetilde X_t^i
=
\left(
r-\frac12\sigma_i^2
\right)\mathrm dt
+
\sigma_i\,\mathrm dW_t^i,
\qquad
\widetilde X_0^i=0,
\]
and the corresponding RBSDE has zero driver, since the discounting
has been incorporated into the transformation.

The transformed obstacle depends on the state through the scalar quantity
$
\sum_{i=1}^d x_i
$
rather than through the product
$
\prod_{i=1}^d x_i.
$
The transformation is exact and preserves the initial option value:
\[
Y_0
=
v(0,\log X_0)
=
u(0,X_0).
\]
It also leaves the penalization parameter unchanged. Indeed, if
\[
\widehat Y_t^\lambda=e^{-rt}Y_t^\lambda,
\]
then
\[
e^{-rt}
\lambda
\bigl(
Y_t^\lambda-g(X_t)
\bigr)^-
=
\lambda
\bigl(
\widehat Y_t^\lambda-\widehat g(t,\widetilde X_t)
\bigr)^-.
\]
Thus the same value of \(\lambda\) is used in the transformed penalized
problem.

Although the transformed state variable remains \(d\)-dimensional, the
obstacle depends only on the one-dimensional statistic
$
\sum_{i=1}^d x_i.
$
This example should therefore be viewed as a structured high-dimensional
benchmark for the proposed numerical method.

The numerical experiments are performed for
$
d\in\{5,10,20,40\}
$
and several time discretizations. The computed initial prices are compared
with the reference values reported in Table~11 of \cite{pham}.\\

\begin{table}[H]
	\centering
	\begin{tabular}{ccccccc}
		\hline
		$d$ & $N$ & $\lambda$
		& $Y^{\mathrm{net}}_0$ (mean)
		& std
		& $Y_0^{\mathrm{ref}}$
		& rel. err. \\
		\hline
		5 & 10 & 21.543 & 0.103895 & 3.06e-04 & 0.107380 & 3.245\% \\
		5 & 20 & 27.141 & 0.105331 & 2.68e-04 & 0.107380 & 1.908\% \\
		5 & 40 & 34.195 & 0.106112 & 2.64e-04 & 0.107380 & 1.181\% \\
		5 & 80 & 43.082 & 0.106641 & 1.85e-04 & 0.107380 & 0.688\% \\
		\addlinespace
		10 & 10 & 21.543 & 0.123564 & 3.92e-04 & 0.129960 & 4.921\% \\
		10 & 20 & 27.141 & 0.126225 & 2.72e-04 & 0.129960 & 2.874\% \\
		10 & 40 & 34.195 & 0.127880 & 2.65e-04 & 0.129960 & 1.601\% \\
		10 & 80 & 43.082 & 0.129096 & 2.74e-04 & 0.129960 & 0.665\% \\
		\addlinespace
		20 & 10 & 21.543 & 0.140489 & 4.23e-04 & 0.151000 & 6.961\% \\
		20 & 20 & 27.141 & 0.145351 & 4.51e-04 & 0.151000 & 3.741\% \\
		20 & 40 & 34.195 & 0.148285 & 4.32e-04 & 0.151000 & 1.798\% \\
		20 & 80 & 43.082 & 0.150849 & 3.66e-04 & 0.151000 & 0.100\% \\
		\addlinespace
		40 & 10 & 21.543 & 0.151255 & 7.54e-04 & 0.168000 & 9.967\% \\
		40 & 20 & 27.141 & 0.159408 & 6.62e-04 & 0.168000 & 5.114\% \\
		40 & 40 & 34.195 & 0.164631 & 4.82e-04 & 0.168000 & 2.005\% \\
		40 & 80 & 43.082 & 0.168540 & 5.22e-04 & 0.168000 & 0.321\% \\
		\hline
	\end{tabular}
	\caption{\centering
		American put option on the product of the assets in the multi-dimensional
		setting: mean, standard deviation, and relative error over \(50\) independent
		runs of the predicted initial price \(Y_0^{\mathrm{net}}\) with respect to
		the benchmark value \(Y_0^{\mathrm{ref}}\), for
		\(d\in\{5,10,20,40\}\).
	}
	\label{tab:AmericanPut_MultiDim}
\end{table}

The results reported in Table~\ref{tab:AmericanPut_MultiDim} show a systematic
improvement as the time discretization is refined for every tested dimension. For each dimension, the relative
error decreases substantially as \(N\) increases. The empirical standard
deviations over the \(50\) independent runs remain small compared with the
corresponding benchmark values, indicating stable training across the tested
configurations.

The errors for coarse time grids tend to increase with the dimension; for
example, for \(N=10\) the relative error increases from \(3.245\%\) at
\(d=5\) to \(9.967\%\) at \(d=40\). This dimension effect becomes much less
pronounced as the time discretization is refined: for \(N=80\), all reported
relative errors are below \(0.7\%\).

\begin{figure}[!ht]
	\centering
	\begin{minipage}{0.85\textwidth}
		\centering
		
		\begin{subfigure}[b]{0.48\linewidth}
			\centering
			\includegraphics[width=\linewidth]{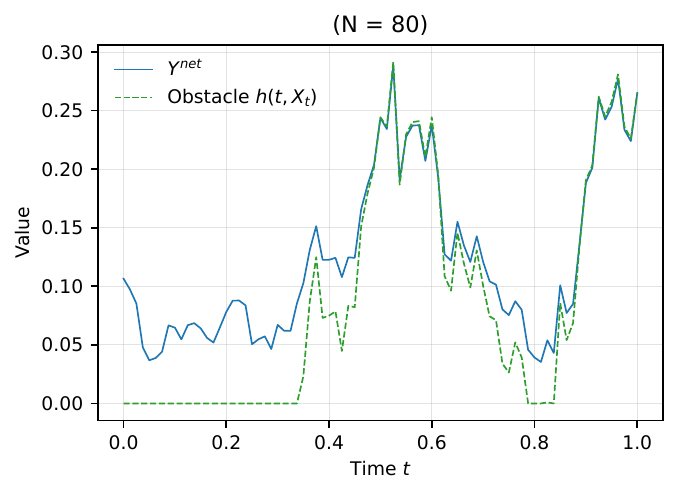}
			\caption{$d=5$.}
		\end{subfigure}
		\hfill
		\begin{subfigure}[b]{0.48\linewidth}
			\centering
			\includegraphics[width=\linewidth]{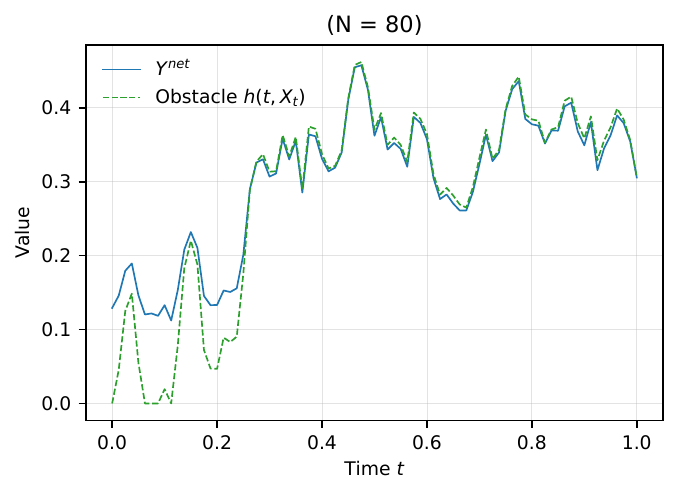}
			\caption{$d=10$.}
		\end{subfigure}
		
		\vspace{0.5em}
		
		\begin{subfigure}[b]{0.48\linewidth}
			\centering
			\includegraphics[width=\linewidth]{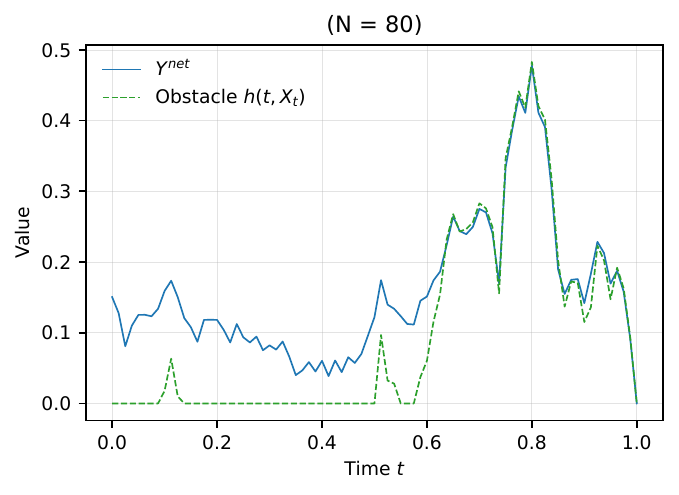}
			\caption{$d=20$.}
		\end{subfigure}
		\hfill
		\begin{subfigure}[b]{0.48\linewidth}
			\centering
			\includegraphics[width=\linewidth]{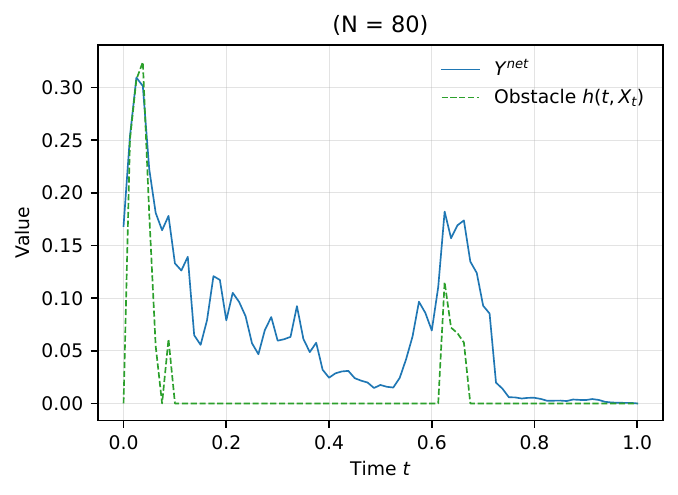}
			\caption{$d=40$.}
		\end{subfigure}
		
		\caption{\centering American put option in the multi-dimensional setting: sample paths of the learned value process $Y^{\mathrm{net}}$ and the corresponding transformed obstacle, associated with the same realization of the forward process, for $d\in\{5,10,20,40\}$.}
		\label{fig:PutOption_MultiDim_Paths}
	\end{minipage}
\end{figure}
Figure~\ref{fig:PutOption_MultiDim_Paths} displays representative trajectories
of the learned value process and the corresponding obstacle. The qualitative
behavior remains stable over the tested dimensions: the learned process
approaches the obstacle in the region where the penalization becomes active.
As in the previous experiments, exact pathwise satisfaction of the obstacle
constraint is not imposed for finite \(\lambda\).
\vspace*{0.2cm}

\section*{Acknowledgments}

The second author acknowledges financial support from
the Moroccan National Center for Scientific and Technical
Research (CNRST) through the PhD-Associate Scholarship
(PASS) Program.


\begin{thebibliography}{9}  

\bibitem{bouchard chassagneux} Bouchard, B., and Chassagneux, J.-F.: Discrete-time approximation for continuously and discretely reflected BSDEs, Stochastic Processes and their Applications, Vol. 118, No. 12, 2269–2293 (2008).

\bibitem{touzi} Bouchard, B., Touzi, N.: Discrete-time approximation and Monte-Carlo simulation of backward stochastic differential equations. Stochastic Processes and their Applications, Vol. 111, No. 2, 175-206 (2004).

\bibitem{bismut} Bismut, J. M.,Théorie probabiliste du contrôle des diffusions, Mem. Amer. Math. Soc., Vol. 4, No. 176 (1973).

\bibitem{karoui} El Karoui, N., Kapoudjian, C., Pardoux, E., Peng, S., and Quenez, M. C.: Reflected solutions of backward SDE's, and related obstacle problems for PDE's. The Annals of Probability, Vol. 25, No. 2, 702–737 (1997).

\bibitem{germain} Germain, M., Pham H. and Warin, X., Approximation Error Analysis of Some Deep Backward Schemes for Nonlinear PDEs. SIAM Journal on Scientific Computing, Vol. 44, No. 1, A28-A56 (2022).

\bibitem{gobet} Gobet, E., Wang, W.: Improved Convergence Rate for Reflected BSDEs by Penalization Method. Applied Mathematics \& Optimization, Vol. 93, No. 1, 10 (2026).


\bibitem{hornik 89} Hornik, K., Stinchcombe, M., and White, H.: Multilayer feedforward networks are universal approximators. Neural Networks, Vol. 2, No. 5, 359-366 (1989).

\bibitem{pham} Huré, C., Pham, H.,and Warin, X.: Deep backward schemes for high-dimensional nonlinear PDEs. Mathematics of Computation, Vol. 89, No. 324, 1547-1579 (2020).


\bibitem{peter imkeller} Imkeller, P., and Dos Reis, G.: Path regularity and explicit convergence rate for BSDE with truncated quadratic growth. Stochastic Processes and their Applications, Vol. 120, No. 3, 348-379 (2010).

\bibitem{platen} Kloeden, P. E., Platen, E.: Numerical solution of stochastic differential equations, Applications of Mathematics : Stochastic Modelling and Applied Probability 23, Springer (1995).


\bibitem{lepeltier} Lepeltier, J.-P., and Xu, M.: Penalization method for reflected backward stochastic differential equations with one r.c.l.l. barrier. Statistics and Probability Letters, Vol. 75, No. 1, 58-66 (2005).



\bibitem{ma zhang} Ma, J., and Zhang, J.: Representations and regularities for solutions to BSDEs with reflections. Stochastic Processes and their Applications, Vol. 115, No. 4, 539-569 (2005).



\bibitem{pardoux peng} Pardoux, E., and Peng, S.: Adapted solution of backward stochastic differential equation. Systems \& Control Letters, Vol. 14, No. 1, 55-61 (1990).


\bibitem{zhang} Zhang, J.: A numerical scheme for BSDEs. The Annals of Applied Probability, Vol. 14, No. 1, 459–488 (2004).

\bibitem{zhang book} Zhang, J.: Backward Stochastic Differential Equations. Probability Theory and Stochastic Modelling 86, Springer (2017).

\end{thebibliography}
\end{document}